\title{On the topological dimension of the Gromov boundaries of some hyperbolic $\text{Out}(F_N)$-graphs}
\author{Mladen Bestvina, Camille Horbez and Richard D. Wade}
\begin{document}

\maketitle
\newtheorem{de}{Definition} [section]
\newtheorem{theo}[de]{Theorem} 
\newtheorem{prop}[de]{Proposition}
\newtheorem{lemma}[de]{Lemma}
\newtheorem{cor}[de]{Corollary}
\newtheorem{propd}[de]{Proposition-Definition}

\theoremstyle{remark}
\newtheorem{rk}[de]{Remark}
\newtheorem{ex}[de]{Example}
\newtheorem{question}[de]{Question}

\normalsize
\newcommand{\note}[1]{\footnote{#1}\marginpar{$\leftarrow$}}
\newcommand{\Ccom}[1]{\Cmod\marginpar{\color{red}\tiny #1 --ch}} 
\newcommand{\Cmod}{$\textcolor{red}{\clubsuit}$}
\newcommand{\mb}[1]{\marginpar{\color{blue}\tiny #1 --mb}}
\newcommand{\rcom}[1]{\rmod\marginpar{\color{purple}\tiny #1 --rw}}
\newcommand{\rmod}{$\textcolor{purple}{\spadesuit}$}
\newcommand{\ccom}[1]{\cnew\marginpar{\color{blue}\tiny #1 --ch}} 
\newcommand{\cnew}{$\textcolor{blue}{\clubsuit}$}

\addtolength\topmargin{-.5in}
\addtolength\textheight{1.in}
\addtolength\oddsidemargin{-.045\textwidth}
\addtolength\textwidth{.09\textwidth}

\newcommand{\imp}{\Rightarrow}
\newcommand{\ra}{\rightarrow}
\newcommand{\m}{^{-1}}
\newcommand{\dunion}{\sqcup}
\newcommand{\eps}{\varepsilon}
\renewcommand{\epsilon}{\varepsilon}
\newcommand{\calc}{\mathcal{C}}
\newcommand{\calf}{\mathcal{F}}
\newcommand{\caly}{\mathcal{Y}}
\newcommand{\calb}{\mathcal{B}}
\newcommand{\calq}{\mathcal{Q}}
\newcommand{\Z}{\mathcal{Z}}
\newcommand{\calh}{\mathcal{H}}
\newcommand{\calo}{\mathcal{O}}
\newcommand{\calu}{\mathcal{U}}
\newcommand{\calv}{\mathcal{V}}
\newcommand{\calw}{\mathcal{W}}
\newcommand{\calt}{\mathcal{T}}
\newcommand{\calx}{\mathcal{X}}
\newcommand{\calz}{\mathcal{Z}}
\newcommand{\cald}{\mathcal{D}}
\newcommand{\cale}{\mathcal{E}}
\newcommand{\cala}{\mathcal{A}}
\newcommand{\calp}{\mathcal{P}}
\newcommand{\calg}{\mathcal{G}}
\newcommand{\bbR}{\mathbb{R}}
\newcommand{\bbZ}{\mathbb{Z}}
\newcommand{\actson}{\curvearrowright}
\newcommand{\es}{\emptyset}
\newcommand{\grp}[1]{\langle #1 \rangle}
\newcommand{\dg}{\dagger}
\newcommand{\stab}{{\rm{Stab}}}
\newcommand{\G}{\Gamma}

\def\R{{\mathbb R}}
\makeatletter
\edef\@tempa#1#2{\def#1{\mathaccent\string"\noexpand\accentclass@#2 }}
\@tempa\rond{017}
\makeatother

\begin{abstract}
We give upper bounds, linear in the rank, to the topological dimensions of the Gromov boundaries of the intersection graph, the free factor graph and the cyclic splitting graph of a finitely generated free group.
\end{abstract}

\setcounter{tocdepth}{2}
\tableofcontents
\section*{Introduction}

The curve graph $\calc(\Sigma)$ of an orientable hyperbolic surface of finite type $\Sigma$ is an essential tool in the study of the mapping class group of $\Sigma$. It was proved to be Gromov hyperbolic by Masur--Minsky \cite{MM99}, and its Gromov boundary was identified with the space of ending laminations on $\Sigma$ by Klarreich \cite{Kla}. A striking application of the curve graph and its geometry at infinity is the recent proof by Bestvina--Bromberg--Fujiwara \cite{BBF} that $\text{Mod}(\Sigma)$ has finite asymptotic dimension, which implies in turn that it satisfies the integral Novikov conjecture.  A crucial ingredient in the proof is the finite asymptotic dimension of the curve graph: this was first proved by Bell--Fujiwara \cite{BF} using Masur--Minsky's \emph{tight geodesics}, and recently recovered (with a bound linear in the genus of $\Sigma$ and the number of punctures) by Bestvina--Bromberg \cite{BB}, via finite capacity dimension of the Gromov boundary. The latter approach builds upon work of Gabai \cite{Gab}, who bounded the topological dimension of $\partial_\infty \mathcal{C}(\Sigma)$ by building covers in terms of train-tracks on the surface; bounding the capacity dimension requires getting more metric control on such covers.

\indent The importance of the curve graph in the study of mapping class groups led people to look for hyperbolic graphs with $\text{Out}(F_N)$-actions. In the present paper, we will be mainly interested in three of them: the \emph{free factor graph} $FF_N$, the \emph{cyclic splitting graph} $FZ_N$ and the \emph{intersection graph} $I_N$. The Gromov boundary of each graph is homeomorphic to a quotient of a subspace of the boundary $\partial CV_N$ of outer space \cite{BR,Ham,Hor,DT}. As $\partial CV_N$ has dimension equal to $3N-5$ \cite{BF94, GL95} and the quotient maps are cell-like, this bounds the cohomological dimension of each of these Gromov boundaries. \emph{A priori} this does not imply finiteness of their topological dimensions. This is the goal of the present paper. 

\theoremstyle{plain}
\newtheorem*{theo:1}{Main Theorem}

\begin{theo:1}\label{main}
Let $N\ge 2$.
\\ The boundary $\partial_\infty I_N$ has topological dimension at most $2N-3$.
\\ The boundary $\partial_\infty FF_N$ has topological dimension at most $2N-2$.
\\ The boundary $\partial_\infty FZ_N$ has topological dimension at most $3N-5$.
\end{theo:1}  

We do not know whether equality holds in any of these cases (see the open questions at the end of the introduction). Following Gabai's work, our proof relies on constructing  a decomposition of each Gromov boundary in terms of a notion of \emph{train-tracks}. We hope that, by getting further control on the covers we construct, this approach may pave the way towards a proof of finite asymptotic dimension of $FF_N$, $FZ_N$ or $I_N$ (via finite capacity dimension of their Gromov boundaries).  
\\
\\
\indent The rest of the introduction is devoted to explaining the strategy of our proof. Although we treat the cases of $\partial_\infty I_N,\partial_\infty FF_N$ and $\partial_\infty FZ_N$ all at once in the paper, we will mainly focus on the free factor graph in this introduction for simplicity, and only say a word about how the proof works for $FZ_N$ (the proof for $I_N$ is similar).

\paragraph*{Bounding the topological dimension.} To establish our main theorem, we will use the following two topological facts  \cite[Lemma 1.5.2 and Proposition 1.5.3]{Eng}. Let $X$ be a separable metric space.
\begin{enumerate}
\item If $X$ can be written as a finite union $X=X_0\cup X_1\cup\dots\cup X_k$ where each $X_i$ is $0$-dimensional, then $\dim(X)\le k$.
\end{enumerate}
Zero-dimensionality of each $X_i$ will be proved by appealing to the following fact.
\begin{enumerate}
\item[2.] If there exists a countable cover of $X_i$ by closed $0$-dimensional subsets, then $\text{dim}(X_i)=0$.
\end{enumerate}

For example, one can recover the fact that $\dim(\mathbb{R}^n)\le n$ from the above two facts, by using the decomposition of $\mathbb{R}^n$ where $X_i$ is the set of points having exactly $i$ rational coordinates. Zero-dimensionality of each $X_i$ can be proved using the second point, by decomposing each $X_i$ into countably many closed subsets $X_i^j$, two points being in the same set precisely when they have the same rational coordinates (one easily checks that these sets $X_i^j$ are $0$-dimensional by finding arbitrarily small boxes around each point of $X^j_i$ with boundary outside of $X_i^j$).

Notice that a decomposition (hereafter, a \emph{stratification}) of $X$ into finitely many subsets $X_i$ as in the first point can be provided by a map $X\to\{0,\dots,k\}$ (hereafter, an \emph{index map}).   Showing that each $X_i$ is zero-dimensional amounts to proving that every point in $X_i$ has clopen neighborhoods (within $X_i$) of arbitrary small diameter (equivalently, every point in $X_i$ has arbitrary small open neighborhoods in $X_i$ with empty boundary). Actually, thanks to the second point above, it is enough to write each stratum $X_i$ as a countable union of closed subsets (called hereafter a \emph{cell decomposition}) and prove $0$-dimensionality of each cell of the decomposition.

\paragraph*{Stratification and cell decomposition of $\partial_\infty FF_N$.} The boundary $\partial_\infty FF_N$ is a separable metric space when equipped with a visual metric. Points in the boundary are represented by $F_N$-trees, and a first reasonable attempt could be to define a stratification of $\partial_\infty FF_N$ by using an index map similar to the one introduced by Gaboriau--Levitt in \cite{GL95}, that roughly counts orbits of branch points and of directions at these points in the trees. Although we make use of these features, our definition of the stratification is slightly different; it is based on a notion of train-tracks.

A train-track $\tau$ (see Definition~\ref{d:train-track}) consists of a triple $(S^\tau,\sim_V,\sim_D)$, where $S^\tau$ is an $F_N$-action on a simplicial tree, and $\sim_V$ (respectively $\sim_D$) is an equivalence relation on the set of vertices (respectively directions) in $S^\tau$.  We say that a tree $T\in\overline{cv}_N$ is \emph{carried} by $\tau$ if there is an $F_N$-equivariant map $f:S^\tau\to T$ (called a \emph{carrying map}) that is in a sense compatible with the train-track structure. The typical situation is when $f$ identifies two vertices in $S^\tau$ if and only if they are $\sim_V$-equivalent, and identifies the germs of two directions based at equivalent vertices in $S^\tau$ if and only if the directions are $\sim_D$-equivalent. For technical reasons, our general definition of a carrying map is slightly weaker; in particular it is a bit more flexible about possible images of vertices in $S^\tau$ at which there are only three equivalence classes of directions. A point $\xi\in\partial_\infty FF_N$ -- which is an equivalence class of trees that all admit alignment-preserving bijections to one another -- is then \emph{carried} by $\tau$ if some (equivalently, any) representative is carried. If $\tau$ carries a point in $\partial_\infty FF_N$ then its underlying tree $S^\tau$ has a free $F_N$-action and determines an open simplex in the interior of outer space .

A train-track $\tau$ as above has an index $i(\tau)$ of at most $2N-2$: this is a combinatorial datum which mainly counts orbits of equivalence classes of vertices and directions in $\tau$. The \emph{index} $i(\xi)$ of a point $\xi\in\partial_\infty FF_N$ is then defined as the maximal index of a train-track that carries $\xi$. A train-track determines a \emph{cell} $P(\tau)$ in $\partial_\infty FF_N$, defined as the set of points $\xi\in\partial_\infty FF_N$ such that $\tau$ carries $\xi$ and $i(\tau)=i(\xi)$. 

We define a stratification of $\partial_\infty FF_N$ by letting $X_i$ be the collection of points of index $i$. Each $X_i$ is covered by the countable collection of the cells $P(\tau)$, where $\tau$ varies over all train-tracks of index $i$. In view of the topological facts recalled above, our main theorem follows for $\partial_\infty FF_N$ from the following points:

\begin{enumerate}
\item (Proposition \ref{closed}) The boundary of $P(\tau)$ in $\partial_\infty FF_N$ is contained in a union of cells of strictly greater index. This implies that $P(\tau)$ is closed in $X_{i(\tau)}$.
\item (Proposition \ref{dim-0}) Each cell $P(\tau)$ has dimension at most $0$.
\end{enumerate}

We say that $P(\tau)$ has dimension `at most' $0$, and not `equal to' $0$, as we do not exclude the possibility that $P(\tau)$ is empty. 

For the first point, we show that if $(\xi_n)_{n\in\mathbb{N}}\in P(\tau)^{\mathbb{N}}$ converges to $\xi\in\partial P(\tau)$, then the carrying maps $f_n $ from $S^\tau$ to representatives of $\xi_n$ converge to a map $f$ from $S^\tau$ to a representative $T$ of $\xi$. However, the limiting map $f$ may no longer be a carrying map: for example, inequivalent vertices in $S^\tau$ may have the same image in the limit, and edges in $S^\tau$ may be collapsed to a point by $f$. We then collapse all edges in $S^\tau$ that map to a point under $f$ and get an induced map $f'$ from the collapse $S'$ to $T$. This determines a new train-track structure $\tau'$ on $S'$. A combinatorial argument enables us to count the number of directions `lost' when passing from $\tau$ to $\tau'$, and show that $i(\tau')>i(\tau)$ unless $T$ is carried by $\tau$. 

Our proof of the second point relies on a \emph{cell decomposition} process similar to Gabai's \cite{Gab}, who used \emph{splitting sequences} of train-tracks on surfaces to get finer and finer covers of the space of ending laminations. In our context, starting from a cell $P(\tau)$, we will construct finer and finer decompositions of $P(\tau)$ into clopen subsets by means of \emph{folding sequences} of train-tracks. Eventually the subsets in the decomposition have small enough diameter. 

More precisely, starting from a train-track $\tau$ one can `resolve' an illegal turn in $\tau$ by folding it. There are several possibilities for the folded track (see the figures in Section~\ref{sec-dim}), so this operation yields a subdivision of $P(\tau)$ into various $P(\tau^{j})$. A first crucial fact we prove is that these $P(\tau^{j})$ are all open in $P(\tau)$.  Now, if $\xi \in P(\tau)$ and $\epsilon >0$, then by folding the train-track $\tau$ for sufficiently long we can reach a train-track $\tau'$ with $diam(P(\tau'))<\epsilon$. Here, hyperbolicity of $FF_N$ is crucial: our folding sequence will determine an (unparameterized) quasi-geodesic going to infinity (towards $\xi$) in $FF_N$. The set $P(\tau_i)$ defined by the train-track obtained at time $i$ of the process is contained in the set of endpoints of geodesic rays in $FF_N$ starting at the simplicial tree associated to $\tau_0$ and passing at a bounded distance from $\tau_i$. From the definition of the visual metric on the boundary, this implies that the diameter of $P(\tau_i)$ converges to $0$ as we move along the folding path. All in all, we find a train-track $\tau'$ such that $P(\tau')$ is an open neighborhood of $\xi$ in $P(\tau)$, and $diam(P(\tau'))< \epsilon$. The boundary of $P(\tau')$ in $P(\tau)$ is empty (it contains points of strictly greater index), so we get $\dim(P(\tau))=0$, as required.

\paragraph*{A word on $\partial_\infty FZ_N$.} Some points in $\partial_\infty FZ_N$ are represented by trees in which some free factor system of $F_N$ is elliptic. This leads us to work with train-tracks that are allowed to have bigger vertex stabilizers, and the index of a train-track now also takes into account the complexity of this elliptic free factor system. \emph{A priori}, our definition of index in this setting only yields a quadratic bound in $N$ to the topological dimension of $\partial_\infty FZ_N$, but we then get the linear bound from cohomological dimension. We also have to deal with the fact that a point $\xi\in\partial_\infty FZ_N$ is an equivalence class of trees that may not admit alignment-preserving bijections between different representatives; this leads us to analysing preferred (mixing) representatives of these classes more closely.

\paragraph*{Organization of the paper.} In Section \ref{sec-background}, we review the definitions of $FF_N$, $I_N$ and $FZ_N$, and the descriptions of their Gromov boundaries. We also prove a few facts concerning mixing representatives of points in $\partial_\infty FZ_N$, which are used to tackle the last difficulty mentioned in the above paragraph. Train-tracks and indices are then defined in Section~\ref{sec-strat}, and the stratifications of the Gromov boundaries are given. We prove in Section~\ref{sec-closed} that each cell $P(\tau)$ is closed in its stratum, by showing that $\partial P(\tau)$ is made of points with strictly higher index. We introduce folding moves in Section \ref{sec-dim}, and use them to prove that each cell $P(\tau)$ has dimension at most $0$. The proof of our main theorem is then completed in Section~\ref{sec-proof}. The paper has an appendix in which we illustrate the motivations behind some technical requirements that appear in our definitions of train-tracks and carrying maps.

\paragraph*{Some open questions.} As mentioned earlier, we hope that the cell decompositions of the boundaries defined in the present paper will provide a tool to tackle the question of finiteness of the asymptotic dimension of the various graphs, following the blueprint of \cite{BB}.

Gabai uses the cell decomposition of the ending lamination space $\mathcal{EL}$ to show that it is highly connected for sufficiently complicated surfaces \cite{Gab} (connectivity had been established earlier for most surfaces by Leininger--Schleimer \cite{LS}). It is unknown whether the boundaries of $I_N$, $FF_N$ or $FZ_N$ are connected.  The question of local connectivity of each boundary (or the related question of local connectivity of the boundary of Culler--Vogtmann's Outer space) is also open to our knowledge. 

We would also like to address the question of finding lower bounds on the dimensions. Gabai shows that the topological dimension of $\mathcal{EL}$ is bounded from below by $3g+p-4$ (where $g$ is the genus of the surface and $p$ is the number of punctures). Since the ending lamination space of a once-punctured surface sits as a subspace of $\partial_\infty FF_{2g}$ and $\partial_\infty FZ_{2g}$, this gives a lower bound on the dimension of $\partial_\infty FF_{2g}$ and $\partial_\infty FZ_{2g}$. Improving the gap between the upper and lower bounds, as well as finding a lower bound for $\partial_\infty I_N$, is an interesting problem.

 The free splitting complex has been notably absent from the above discussion. This is because its boundary is harder to describe, and does not appear as a quotient of a subspace of the boundary of outer space. However, we expect that our methods can be applied here when its boundary is better understood.   One can also apply these techniques to the boundaries of relative versions of the various $\mathrm{Out}(F_N)$-complexes \cite{Hor, GH}. 

\paragraph*{Recent developments.} After the first version of this paper was completed, Guirardel and the first two named authors proved a related result about the asymptotic geometry of $\mathrm{Out}(F_N)$, namely that $\mathrm{Out}(F_N)$ is boundary amenable \cite{BGH}. In particular, we now know that $\mathrm{Out}(F_N)$ satisfies the Novikov conjecture. From the surface viewpoint, Hamenst\"adt has shown that asymptotic dimension of the disk graph of a handlebody of genus at least two is at most quadratic in the genus  \cite{MR3932938}. Also, Bestvina--Bromberg--Fujiwara \cite{BBF2} gave a proof that the mapping class group has finite asymptotic dimension that does not directly appeal to the finite asymptotic dimension of the curve complex (although curve complexes and the Masur--Minsky distance formula still play vital roles). Finiteness of the asymptotic dimension of $\mathrm{Out}(F_N)$ and its related complexes is still a wide open problem.

\paragraph*{Acknowledgments.}  We would like to thank
Patrick Reynolds for conversations we had related to the present
project, and Vera Tonić for pointing out to us the reference
\cite{Eng} relating topological and cohomological dimensions. The
present work was supported by the National Science Foundation under
Grant No. DMS-1440140 while the authors were in residence at the
Mathematical Sciences Research Institute in Berkeley, California,
during the Fall 2016 semester. The first author was supported by the
NSF under Grant No. DMS-1607236.

\section{Hyperbolic $\text{Out}(F_N)$-graphs and their boundaries}\label{sec-background}

We review the definitions of three hyperbolic $\text{Out}(F_N)$-graphs (the free factor graph, the intersection graph and the $\calz$-splitting graph) and the descriptions of their Gromov boundaries. The only novelties are some facts concerning mixing $\calz$-averse trees in the final subsection. 

\subsection{The free factor graph}

The \emph{free factor graph} $FF_N$ is the graph (equipped with the simplicial metric) whose vertices are the conjugacy classes of proper free factors of $F_N$. Two conjugacy classes $[A]$ and $[B]$ of free factors are joined by an edge if they have representatives $A,B$ such that $A\subsetneq B$ or $B\subsetneq A$. Its hyperbolicity was proved in \cite{BF}. When $N=2$ each proper free factor is the conjugacy class of a primitive element. In order to make this graph connected, one adds an edge in $FF_2$ between two conjugacy classes if they have representatives that form a basis of $F_2$. 

To describe its Gromov boundary, we first recall that \emph{unprojectivized outer space} $cv_N$ is the space of all $F_N$-equivariant isometry classes of minimal, free, simplicial, isometric $F_N$-actions on simplicial metric trees. Its closure $\overline{cv}_N$ (for the equivariant Gromov--Hausdorff topology) was identified in  \cite{CL,BF94} with the space of all minimal \emph{very small} $F_N$-trees, i.e. $F_N$-actions on $\mathbb{R}$-trees in which all stabilizers of nondegenerate arcs are cyclic (possibly trivial)  and root-closed, and tripod stabilizers are trivial. There exists a coarsely $\text{Out}(F_N)$-equivariant map $\pi:cv_N\to FF_N$, which sends a tree $T$ to a free factor that is elliptic in a tree $\overline{T}$ obtained from $T$ by collapsing some edges to points. 

An $F_N$-tree $T\in\partial cv_N$ is \emph{arational} if no proper free factor of $F_N$ is elliptic in $T$, or acts with dense orbits on its minimal subtree. Two arational trees $T$ and $T'$ are \emph{equivalent} ($T\sim T'$) if there exists an  $F_N$-equivariant alignment-preserving bijection from $T$ to $T'$. We denote by $\mathcal{AT}_N$ the subspace of $\partial cv_N$ made of arational trees. 
Arational trees were introduced by Reynolds in \cite{Rey}, where he proved that every arational $F_N$-tree is either free, or dual to an arational measured lamination on a once-holed surface.

\begin{theo}(Bestvina--Reynolds \cite{BR}, Hamenstädt \cite{Ham})\label{bdy-ff} There is a homeomorphism $$\partial\pi:\mathcal{AT}_N/{\sim}\to\partial_\infty FF_N,$$ which extends the map $\pi$ continuously to the boundary. By this we mean that for all sequences $(S_n)_{n\in\mathbb{N}}\in (cv_N)^{\mathbb{N}}$ converging to a tree $S_\infty\in \mathcal{AT}_N$ (for the topology on $\overline{cv}_N$), the sequence $(\pi(S_n))_{n\in\mathbb{N}}$ converges to $\partial\pi(S_\infty)$ (for the topology on $FF_N\cup\partial_\infty FF_N$).
\end{theo}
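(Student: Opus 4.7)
The plan is to construct the map $\partial\pi$, show it descends to the quotient by $\sim$, and then verify that the induced map is a bijection and a homeomorphism. The main tool throughout will be the hyperbolicity of $FF_N$ (Bestvina--Feighn) together with the fact, also due to Bestvina--Feighn, that folding paths in $cv_N$ project to unparameterized quasi-geodesics in $FF_N$.

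First I would define the extension. Given $T\in\mathcal{AT}_N$ and a sequence $(S_n)\to T$ in $\overline{cv}_N$, I connect consecutive points $S_n, S_{n+1}$ in $cv_N$ by folding paths and project the concatenation to $FF_N$; the result is an unparameterized quasi-geodesic. One then shows that $(\pi(S_n))$ cannot stay in a bounded subset of $FF_N$: otherwise a subsequence would provide free factors $[A_n]$ within bounded distance of a fixed vertex $[A]$, each becoming elliptic in a suitable collapse of $S_n$, and a compactness argument in projectivized $\overline{cv}_N$ would extract a limiting proper free factor of $F_N$ that is either elliptic in $T$ or whose minimal subtree fails to have dense orbits --- both contradicting arationality. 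Hyperbolicity then forces $(\pi(S_n))$ to converge to a point $\partial\pi(T)\in\partial_\infty FF_N$, and interleaving two approximating sequences shows this limit is intrinsic to $T$.

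Next, if $T\sim T'$ via an $F_N$-equivariant alignment-preserving bijection, then their translation length functions agree up to a positive scalar, so, after rescaling, any sequence approximating $T$ also approximates $T'$; hence $\partial\pi(T)=\partial\pi(T')$, and the map factors through $\mathcal{AT}_N/\sim$. For surjectivity, one starts with $\xi\in\partial_\infty FF_N$, picks $[A_n]\to\xi$, lifts each $[A_n]$ to some $S_n\in cv_N$ with $\pi(S_n)=[A_n]$, and, after rescaling and extracting, gets a subsequential limit $T\in\overline{cv}_N$. The contrapositive of the argument above forces $T$ to be arational, and by construction $\partial\pi([T])=\xi$. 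Continuity of $\partial\pi$ is precisely the content of the first step, and continuity of the inverse then follows from sequential compactness of projectivized $\overline{cv}_N$ together with injectivity.

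The main obstacle is injectivity. Suppose $T_1, T_2\in\mathcal{AT}_N$ with $\partial\pi(T_1)=\partial\pi(T_2)$; I must produce an $F_N$-equivariant alignment-preserving bijection between them. My approach would be to combine the dynamical information encoded by the common boundary point (both trees are approximated by sequences whose folding-path concatenation projects to a quasi-geodesic converging to the same $\xi\in\partial_\infty FF_N$, forcing $T_1$ and $T_2$ to share their ``dual lamination'') with Reynolds' classification of arational trees --- each is either a free mixing tree or is dual to an arational measured lamination on a once-holed surface --- and with his analysis of the equivalence relation $\sim$ in each case. A case analysis then reconstructs the required alignment-preserving bijection. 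This is the genuinely deep step: the other ingredients are standard applications of hyperbolicity plus compactness, but reconstructing an alignment-preserving identification from the sole equality of boundary points is what forces the use of the full structure theory for arational $F_N$-trees.
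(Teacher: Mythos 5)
The paper does not prove this theorem: it is quoted from Bestvina--Reynolds \cite{BR} and Hamenstädt \cite{Ham}, so there is no internal argument to compare yours against. Judged on its own terms, your overall architecture (projecting folding paths to unparameterized quasi-geodesics, ruling out bounded projections via compactness and arationality, lifting free factors for surjectivity, and isolating injectivity as the deep step) does follow the Bestvina--Reynolds strategy in outline.

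However, there is a concrete error in your well-definedness step. You claim that if $T\sim T'$ via an $F_N$-equivariant alignment-preserving bijection, then their translation length functions agree up to a positive scalar, so that after rescaling the same sequences approximate both. This is false: an alignment-preserving bijection preserves betweenness but is not a homothety, and equivalent arational trees generically have non-proportional length functions. The standard source of examples is a non-uniquely ergodic arational lamination (on a surface, or its free-group analogues): the dual trees for distinct ergodic transverse measures admit alignment-preserving bijections between them, yet their length functions are linearly independent, and they are distinct points even in projectivized $\overline{cv}_N$ approximated by genuinely different sequences. Indeed the whole point of the theorem is that the fibers of $\partial\pi$ are these nontrivial simplices of (projective classes of) length measures; if equivalence forced proportionality, the quotient $\mathcal{AT}_N/{\sim}$ would be essentially trivial and the statement far easier. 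The correct argument that $\partial\pi(T)=\partial\pi(T')$ must instead show directly that the projections of the two approximating sequences have bounded Gromov products, e.g.\ by relating both to the common dual lamination or by identifying the full limit set of the boundary point with the entire $\sim$-class. Relatedly, your injectivity paragraph is a placeholder rather than a proof: ``a case analysis then reconstructs the required alignment-preserving bijection'' is precisely the content of \cite{BR}, and as written the proposal does not supply it.
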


We chose to state our main theorem for the case that $N \geq 2$ as the above theorem (and the rest of the paper) still applies in the case $N=2$. However, as $FF_2$ is a Farey graph and $\partial_\infty FF_2$ is a Cantor set, we see that the result when $N=2$ is not optimal. 

\subsection{The intersection graph}

A conjugacy class $\alpha$ of $F_N$ is \emph{geometric} if it is either part of a free basis of $F_N$, or else corresponds to the boundary curve of a once-holed surface with fundamental group identified with $F_N$. The \emph{intersection graph} $I_N$ (with Mann's definition \cite{Man2}, a variation on Kapovich--Lustig's \cite{KL}) is the bipartite graph whose vertices are the simplicial $F_N$-trees in $\partial cv_N$ together with the set of geometric conjugacy classes of $F_N$. A tree $T$ is joined by an edge to a conjugacy class $\alpha$ whenever $\alpha$ is elliptic in $T$. Its hyperbolicity was proved in \cite{Man2}. The intersection graph is also quasi-isometric to Dowdall--Taylor's co-surface graph \cite[Section 4]{DT}. We denote by $\mathcal{FAT}_N\subseteq\mathcal{AT}_N$ the space of free arational trees in $\overline{cv}_N$. Again, there is a coarsely $\text{Out}(F_N)$-equivariant map $\pi:cv_N\to I_N$, which sends a tree $T\in cv_N$ to a tree $\overline{T}\in\partial cv_N$ obtained by collapsing some of the edges of $T$ to points. 

\begin{theo}(Dowdall--Taylor \cite{DT})
There is a homeomorphism $$\partial\pi:\mathcal{FAT}_N/{\sim}\to\partial_\infty I_N,$$ which extends the map $\pi$ continuously to the boundary.
\end{theo}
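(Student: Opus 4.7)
The plan is to adapt the Bestvina--Reynolds/Hamenstädt argument from Theorem~\ref{bdy-ff} to the intersection graph, proceeding in four steps: (i) for any $T\in\mathcal{FAT}_N$ and any approximating sequence $(S_n)\in(cv_N)^{\mathbb{N}}$ with $S_n\to T$ in $\overline{cv}_N$, show that the projections $\pi(S_n)$ are unbounded in $I_N$ and converge to a well-defined point of $\partial_\infty I_N$; (ii) check that this limit depends only on the $\sim$-class of $T$; (iii) prove injectivity of the induced map $\mathcal{FAT}_N/{\sim}\to\partial_\infty I_N$; (iv) prove surjectivity and bicontinuity.

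Step~(i) is where the restriction to \emph{free} arational trees becomes essential. If the projections stayed in a bounded region of $I_N$, then, using the bipartite structure of $I_N$, one could extract a fixed geometric conjugacy class $\alpha$ adjacent to cofinitely many $\pi(S_n)$. This forces $\alpha$ to have uniformly bounded translation length along $(S_n)$, and hence to be elliptic in the limit $T$. Arationality of $T$ rules out $\alpha$ being contained in a proper free factor, while freeness of $T$ rules out $\alpha$ being a boundary curve of a once-holed surface dual to $T$; these are the only two cases allowed by Reynolds' structure theorem \cite{Rey}, so we get a contradiction. Convergence in $\partial_\infty I_N$ then follows from the hyperbolicity of $I_N$ proved in \cite{Man2}: any two approximating sequences $(S_n),(S'_n)$ of $T$ can be interpolated by Stallings folding paths in $cv_N$ whose $\pi$-images track a common (unparameterized) quasi-geodesic ray in $I_N$, so the Gromov products $(\pi(S_n)\mid\pi(S'_m))_{\pi(S_0)}$ tend to infinity.

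For step~(ii), if $T\sim T'$ then an $F_N$-equivariant alignment-preserving bijection lets one compare approximating sequences of $T$ and $T'$ with only a bounded $I_N$-perturbation, yielding the same boundary limit. For injectivity in step~(iii), suppose $\partial\pi(T)=\partial\pi(T')$ with $T,T'\in\mathcal{FAT}_N$. The free arational hypothesis gives mixing and triviality of arc stabilizers via \cite{Rey}, and the coincidence of boundary limits in the hyperbolic graph $I_N$ must be upgraded to alignment-equivalence of the trees — heuristically, if $T$ and $T'$ differed on some nondegenerate arc, one could use mixing to find a geometric conjugacy class that distinguishes the projections at infinity. Surjectivity in step~(iv) is obtained by choosing, for $\xi\in\partial_\infty I_N$, a sequence of simplicial-tree vertices of $I_N$ converging to $\xi$, lifting it to $cv_N$, extracting a subsequential limit $T\in\overline{cv}_N$ by compactness of projectivized outer space, and running the unboundedness criterion from step~(i) backwards to see that $T\in\mathcal{FAT}_N$. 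Bicontinuity of $\partial\pi$ follows from these explicit sequential descriptions together with first-countability of both sides.

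The main obstacle is the injectivity in step~(iii). The map $\pi$ records only a simplicial collapse of $T$ together with its elliptic conjugacy classes, so recovering the $\sim$-class of $T$ from a single $I_N$-boundary point requires a global argument that exploits the full mixing behaviour of free arational trees and the structure of very small $F_N$-trees from \cite{CL,BF}. This is the analogue of the corresponding delicate step for $\partial_\infty FF_N$ in \cite{BR,Ham}, with an added subtlety: the intersection graph is more sensitive to elliptic geometric classes, which one must leverage positively to produce the alignment-preserving bijection while simultaneously ruling out the extra opportunities this sensitivity creates for distinct trees to project to the same boundary point.
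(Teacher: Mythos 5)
This statement is not proved in the paper at all: it is quoted as a known theorem of Dowdall--Taylor \cite{DT} (obtained there via the identification of $I_N$ with the co-surface graph), so there is no internal argument to compare yours against. Judged on its own terms, your proposal is a plausible outline of the Bestvina--Reynolds/Hamenst\"adt strategy, but it is not a proof: the two hardest steps are left as placeholders. For injectivity you explicitly write ``heuristically, \dots one could use mixing to find a geometric conjugacy class that distinguishes the projections at infinity''; this is exactly the point where all of the real work lies, and the heuristic as stated is suspect, since for free arational $T$ and $T'$ \emph{no} geometric conjugacy class is elliptic in either tree, so it is not clear what ``distinguishes the projections at infinity'' would even mean without first building the whole machinery of (un)bounded projections for folding or geodesic paths terminating at inequivalent arational trees.

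There are also concrete gaps earlier. In step (i), from boundedness of $\{\pi(S_n)\}$ in $I_N$ you extract ``a fixed geometric conjugacy class $\alpha$ adjacent to cofinitely many $\pi(S_n)$''; this does not follow, because $I_N$ is locally infinite and a bounded set of simplicial-tree vertices need not share a common neighbour. The correct statement (that sequences converging to a non-(free arational) tree have bounded projection, and conversely) is a substantial result, not a pigeonhole argument. In step (iv), ``running the unboundedness criterion from step (i) backwards'' is the converse implication to what step (i) establishes, and it needs its own proof: one must show that if the subsequential limit $T$ of the lifts is \emph{not} free arational, then $\pi(S_n)$ cannot converge to $\xi$, and moreover that the limit point actually equals $\xi$ rather than some other boundary point. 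Finally, ``bicontinuity follows from first-countability'' presumes the sequential characterizations in both directions, which is essentially the whole theorem. Since this is a cited external result, the appropriate course is to rely on \cite{DT} rather than to reprove it; if you do want a self-contained argument, each of the steps above needs to be filled in with the quantitative projection estimates that Dowdall--Taylor develop.
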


Continuity of the extension is understood in the same way as in the statement of Theorem~\ref{bdy-ff} above. When $N=2$, the intersection graph $I_2$ is bounded, and $\partial_\infty I_2$ is empty.

\subsection{The cyclic splitting graph}

A \emph{cyclic splitting} of $F_N$ is a simplicial, minimal $F_N$-tree in which all edge stabilizers are cyclic (possibly trivial). The \emph{$\calz$-splitting graph} $FZ_N$ is the graph whose vertices are the $F_N$-equivariant homeomorphism classes of $\calz$-splittings of $F_N$. Two splittings are joined by an edge if they have a common refinement. Its hyperbolicity was proved by Mann \cite{Man}. 

We recall that two trees $T,T'\in\overline{cv}_N$ are \emph{compatible} if there exists a tree $\widehat{T}\in\overline{cv}_N$ that admits alignment-preserving $F_N$-equivariant maps onto both $T$ and $T'$. A tree $T\in\overline{cv}_N$ is \emph{$\mathcal{Z}$-averse} if it is not compatible with any tree $T'\in\overline{cv}_N$ which is compatible with a $\mathcal{Z}$-splitting of $F_N$. We denote by $\calx_N$ the subspace of $\overline{cv}_N$ made of $\calz$-averse trees. Two $\mathcal{Z}$-averse trees are \emph{equivalent} if they are both compatible with a common tree; although not obvious, this was shown in \cite{Hor} to be an equivalence relation on $\calx_N$, which we denote by $\approx$. 

There is an $\text{Out}(F_N)$-equivariant map $\pi:cv_N\to FZ_N$, given by forgetting the metric.

\begin{theo}(Hamenstädt \cite{Ham}, Horbez \cite{Hor})\label{bdy-fz}
There is a homeomorphism $$\partial\pi:\mathcal{X}_N/{\approx}\to\partial_\infty FZ_N,$$ which extends the map $\pi$ continuously to the boundary.
\end{theo}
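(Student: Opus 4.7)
The plan is to construct $\partial\pi$ by extending $\pi$ to sequences in $cv_N$ that converge to trees in $\calx_N$, show the extension is constant on $\approx$-classes, and then prove the induced map is a bijection onto $\partial_\infty FZ_N$ that is in fact a homeomorphism. The overall strategy parallels the $FF_N$ case (Theorem \ref{bdy-ff}), but with two technical complications proper to the $\calz$-splitting setting: trees in $\calx_N$ may have nontrivial elliptic subgroups (so one tracks $\calz$-splittings rather than free factors), and the equivalence $\approx$ is weaker than the existence of alignment-preserving bijections, so comparisons between different representatives must be routed through intermediate common compatibility trees.

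First I would establish that if $S_n\in cv_N$ converges to $T\in\calx_N$, then $\pi(S_n)$ is unbounded in $FZ_N$ and converges to a point of $\partial_\infty FZ_N$. Unboundedness is proved by contradiction: a bounded subsequence would, up to refinement, share a common $\calz$-splitting $Z$ with all $\pi(S_n)$; by compactness of projectivized $\overline{cv}_N$ and closedness of the compatibility relation on $\overline{cv}_N$, the limit $T$ would then be compatible with a tree compatible with $Z$, contradicting $\calz$-aversity. Convergence in the boundary then follows from hyperbolicity of $FZ_N$ by showing that the Gromov products $\langle\pi(S_m),\pi(S_n)\rangle$ tend to infinity; otherwise, a coarse projection of the Gromov-product geodesic would again produce a $\calz$-splitting compatible with $T$.

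Next I would verify that this map descends to $\calx_N/{\approx}$ and is injective. If $T\approx T'$, the definition, chained through any intermediate compatibility trees, produces at each link a tree $\widehat T\in\overline{cv}_N$ admitting alignment-preserving $F_N$-equivariant maps onto two trees in $\overline{cv}_N$; such a $\widehat T$ lies within bounded $FZ_N$-distance of the $\pi$-images of its two targets, so chaining these uniform bounds yields agreement of boundary limits. Conversely, if approximations of $T\not\approx T'$ produced equal limits, one would find $\calz$-splittings lying on bounded-length geodesics in $FZ_N$ between $\pi(S_n)$ and $\pi(S'_n)$; their lifts in $\overline{cv}_N$ would subconverge, by compactness, to a common compatibility witness for $T$ and $T'$, forcing $T\approx T'$.

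The main obstacle is surjectivity together with upgrading the continuous bijection to a homeomorphism. For surjectivity, given $\xi\in\partial_\infty FZ_N$ and $Z_n\to\xi$ in $FZ_N$, lift each $Z_n$ to $S_n\in cv_N$ with $\pi(S_n)$ uniformly close to $Z_n$, and extract a projective limit $T\in\overline{cv}_N$. One must show $T\in\calx_N$: if $T$ were compatible with a tree compatible with some $\calz$-splitting $Z$, then $Z$ would remain within bounded $FZ_N$-distance of $\pi(S_n)$ for infinitely many $n$, contradicting $\pi(S_n)\to\xi\in\partial_\infty FZ_N$. The hardest technical ingredient, underpinning everything above, is that $\approx$ is a closed (and in particular transitive) equivalence relation on the projectivization of $\calx_N$; once this is in hand, the quotient is compact and Hausdorff, so the continuous bijection into the Hausdorff space $\partial_\infty FZ_N$ is automatically a homeomorphism.
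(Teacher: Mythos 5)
This theorem is not proved in the paper at all: it is quoted as a black box from Hamenst\"adt and Horbez, so there is no internal proof to compare against. Judged on its own terms, your sketch follows the broad outline of the actual arguments in those references for the first three steps (unboundedness of $\pi(S_n)$ via $\calz$-aversity, convergence via Gromov products, well-definedness and injectivity on $\approx$-classes), but the final step contains a genuine error.

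The claim that $\calx_N/{\approx}$ is compact, so that the continuous bijection onto $\partial_\infty FZ_N$ is automatically a homeomorphism, is false. The set $\calx_N$ of $\calz$-averse trees is not closed in the (projectively compact) space $\overline{cv}_N$: a sequence of $\calz$-averse trees can perfectly well converge to a simplicial tree, or more generally to a tree compatible with a $\calz$-splitting. Consequently the quotient is not compact --- and indeed it cannot be, since $FZ_N$ is a non-proper hyperbolic space and its Gromov boundary is known not to be compact (already $\partial_\infty FF_N$ is non-compact, for the same reason that the ending lamination space is). Your argument would therefore prove a false statement. Relatedly, ``closed'' does not imply ``transitive'' for a relation, and the transitivity of $\approx$ is itself a nontrivial theorem of \cite{Hor}, not a formal consequence of closedness; nor does a closed equivalence relation on a non-compact space guarantee a Hausdorff quotient. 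The actual proofs must establish continuity of the inverse directly: given $\pi(S_n)\to\xi=\partial\pi([T])$ in $FZ_N\cup\partial_\infty FZ_N$, one extracts a projective limit of the $S_n$ in the compact space $\overline{cv}_N$ (not in the quotient) and shows, using the structure theory of $\calz$-averse trees and their compatibility/collapse maps, that any such limit lies in the $\approx$-class of $T$. This closedness of the map $\calx_N\to\partial_\infty FZ_N$ at the level of representatives is the ingredient the present paper actually invokes later (e.g.\ in the proof of Proposition~\ref{closed}), and it is what replaces the compactness argument you propose.
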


Each $\approx$-equivalence class in $\calx_N$ has preferred representatives that are mixing. We recall that a tree $T\in\overline{cv}_N$ is \emph{mixing} if for all segments $I,J\subseteq T$, there exists a finite set $\{g_1,\dots,g_k\}\subseteq F_N$ such that $J$ is contained in the union of finitely many translates $g_iI$. 

\begin{theo}(Horbez \cite{Hor})
Every $\approx$-class in $\calx_N$ contains a mixing tree, and any two mixing trees in the same $\approx$-class admit $F_N$-equivariant alignment-preserving bijections between each other. Any $\calz$-averse tree admits an $F_N$-equivariant alignment-preserving map onto every mixing tree in its $\approx$-class.
\end{theo}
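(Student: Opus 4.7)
My plan is to establish the third assertion first, as the rigidity it provides will yield both existence and uniqueness of mixing representatives.

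For the third assertion, let $T$ be $\calz$-averse and $T_m$ a mixing tree with $T\approx T_m$. By definition of $\approx$ there is a common refinement $\widehat T\in\overline{cv}_N$ carrying $F_N$-equivariant alignment-preserving maps $p_T\colon\widehat T\to T$ and $p_m\colon\widehat T\to T_m$. I would construct the desired map $q\colon T\to T_m$ by the formula $q(x):=p_m(y)$ for any $y\in p_T^{-1}(x)$. The essential point is to show that $p_m$ is constant on every $p_T$-fiber, equivalently that for each $x\in T$ the subtree $S_x:=p_m(p_T^{-1}(x))$ of $T_m$ is a single point. I would argue by contradiction: if some $S_x$ is nondegenerate, then by $F_N$-equivariance and mixing of $T_m$, the family $\{g\cdot S_x\}_{g\in F_N}$ can be organized, via Guirardel's and Levitt's transverse covering technology, into an $F_N$-invariant transverse covering of $T_m$ whose skeleton yields a splitting of $F_N$ compatible with $T_m$. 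The very small hypothesis on trees in $\overline{cv}_N$ forces the edge stabilizers of this splitting to be cyclic, so $T_m$ would be compatible with a $\calz$-splitting, contradicting $\calz$-aversion. Once well-definedness is secured, equivariance and alignment-preservation of $q$ follow from those of $p_T$ and $p_m$.

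For existence of a mixing representative, I would partially order the $\approx$-class of $T$ by declaring $T_1\preceq T_2$ whenever $T_2$ admits an $F_N$-equivariant alignment-preserving map onto $T_1$. Compactness of projectivized $\overline{cv}_N$, together with a Zorn-type argument applied to descending chains in this order, yields a $\preceq$-minimal element $T_m$. If $T_m$ were not mixing, two segments $I,J\subseteq T_m$ witnessing the failure of mixing would yield a nontrivial $F_N$-invariant transverse covering of $T_m$ whose collapse produces a tree strictly smaller than $T_m$ in the $\preceq$-order within the same $\approx$-class, contradicting minimality. For uniqueness, let $T_1$ and $T_2$ be mixing trees in the same class; the third assertion supplies equivariant alignment-preserving maps $q_{12}\colon T_1\to T_2$ and $q_{21}\colon T_2\to T_1$. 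If $q_{21}\circ q_{12}$ collapsed a nondegenerate arc $[a,b]\subseteq T_1$, then by mixing of $T_1$ every arc of $T_1$ would be covered by finitely many $F_N$-translates of $[a,b]$, each collapsed, so connectedness would force $T_1$ itself to collapse to a point — impossible by minimality. Hence $q_{21}\circ q_{12}$ is injective on arcs; a rigidity argument for $F_N$-equivariant alignment-preserving self-maps of minimal $F_N$-trees (using preservation of axes and translation lengths of hyperbolic elements) forces it to be the identity, and symmetrically for $q_{12}\circ q_{21}$, so $q_{12}$ is an equivariant alignment-preserving bijection.

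The main obstacle will be the degeneracy step in the proof of the third assertion — translating the nondegeneracy of some $S_x$ into an honest $F_N$-invariant transverse covering of $T_m$ and then into a compatible cyclic splitting of $F_N$. This is where the interplay between the mixing hypothesis on $T_m$, transverse covering theory, and the very small hypothesis in $\overline{cv}_N$ is essential; the remaining steps are then relatively formal.
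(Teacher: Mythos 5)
This theorem is not proved in the paper at all: it is quoted verbatim from \cite{Hor} as background material, so there is no internal proof to measure your argument against. Assessed on its own terms, your sketch has the right overall architecture (prove the collapse statement first, then get existence from a minimality argument and uniqueness by composing the two collapse maps), and your uniqueness step is sound: an equivariant alignment-preserving self-map of a mixing minimal tree that collapses one arc must collapse every arc, hence the whole tree, which is absurd.

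However, two of your steps have genuine gaps. First, your opening line misreads the definition of $\approx$: two $\calz$-averse trees are declared equivalent when each is compatible with some \emph{third} tree, not when they are compatible with each other. Producing a direct common refinement $\widehat{T}$ of $T$ and $T_m$ is therefore not ``by definition of $\approx$''; it is itself one of the nontrivial conclusions of \cite{Hor} (this is essentially why the paper remarks that $\approx$ being an equivalence relation is ``not obvious''). Second, the degeneracy step does not close. Even granting that the nondegenerate sets $S_x$ assemble into a transverse covering of $T_m$ --- which is not automatic, since $p_m$-images of disjoint $p_T$-fibers can overlap in nondegenerate segments --- a mixing $\calz$-averse tree can admit a nontrivial transverse covering: the paper itself repeatedly invokes \cite[Proposition~4.23]{Hor}, which \emph{constrains the stabilizers} of subtrees in transverse coverings of mixing $\calz$-averse trees, a statement that would be vacuous if such coverings were impossible. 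To extract a compatible $\calz$-splitting from a transverse covering one needs its skeleton to be a nontrivial tree with cyclic edge stabilizers, and neither is automatic (the skeleton can be a point, and edge stabilizers of skeletons are point stabilizers, which in very small trees need not be cyclic). So the contradiction you want does not follow from mixing plus $\calz$-aversion alone; this is exactly where the argument of \cite{Hor} needs the finer structure theory of compatible trees, and a similar caveat applies to the Zorn/compactness step, where one must justify that projective limits of descending chains remain $\calz$-averse and in the same $\approx$-class.
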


The space of arational trees $\mathcal{AT}_N$ is contained in the space of $\calz$-averse trees $\mathcal{X}_N$, and the equivalence relation $\sim$ we have defined on $\mathcal{AT}_N$ is the restriction to $\mathcal{AT}_N$ of the equivalence relation $\approx$ on $\mathcal{X}_N$ (all arational trees are mixing \cite{Rey}). The inclusion $\mathcal{AT}_N \subseteq\mathcal{X}_N$ then induces a subspace inclusion $\partial_\infty FF_N\subseteq\partial_\infty FZ_N$. 

\subsection{More on mixing $\calz$-averse trees}

In this section, we establish a few more facts concerning mixing $\calz$-averse trees and their possible point stabilizers, building on the work in \cite{Hor}. If we were only concerned with the free factor graph, we would not need these results. The reader may decide to skim through the section and avoid the technicalities in the proofs in a first reading. 

\begin{lemma}\label{factor-action}
Let $T\in\overline{cv}_N$ be mixing and $\calz$-averse, and let $A\subseteq F_N$ be a proper free factor. Then the $A$-action on its minimal subtree $T_A\subseteq T$ is discrete (possibly $T_A$ is reduced to a point).  
\\ In particular, if no proper free factor is elliptic in $T$, then $T$ is arational.
\end{lemma}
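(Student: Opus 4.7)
The plan is to prove the first assertion by contradiction and deduce the \emph{in particular} clause as an immediate corollary. Suppose, for contradiction, that $A\subseteq F_N$ is a proper free factor whose action on its minimal subtree $T_A\subseteq T$ is not discrete. I will construct a $\calz$-splitting of $F_N$ that is compatible with $T$, contradicting the $\calz$-averseness of $T$.

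The first step is to build a $\calz$-splitting of $A$. The tree $T_A$ inherits the very small property from $T$, so it is a very small $A$-tree. Apply Levitt's graph-of-actions decomposition to $T_A$: it decomposes as a transverse family of vertex actions that are either arcs or very small trees with dense orbits for the corresponding vertex subgroup of $A$. The non-discreteness of the $A$-action forces at least one vertex tree to be nondegenerate with dense orbits. Collapse every vertex tree to a point; the resulting simplicial $A$-tree $S_A$ is dominated by $T_A$ via an $A$-equivariant alignment-preserving map, and its edge stabilizers are cyclic (possibly trivial) because they arise as arc stabilizers in the very small tree $T_A$. Thus $S_A$ is a $\calz$-splitting of $A$.

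The second step is to promote $S_A$ to a $\calz$-splitting of $F_N$. Write $F_N=A\ast B$ for some complementary free factor $B$, and take the Bass-Serre tree of this free product (whose edge stabilizers are trivial). Blow up every vertex stabilized by a conjugate of $A$ by inserting the corresponding translate of $S_A$. This produces a simplicial $F_N$-tree $S$ whose edge stabilizers are either trivial or cyclic, so $S$ is a $\calz$-splitting of $F_N$.

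The third step -- which I expect to be the main obstacle -- is to show that $T$ and $S$ are compatible. The $A$-equivariant alignment-preserving map $T_A\to S_A$ extends by translation to a family of maps $g\cdot T_A\to g\cdot S_A$. The mixing assumption on $T$ is what allows these locally defined maps to be assembled: it controls the intersection pattern of the $F_N$-translates of $T_A$ inside $T$, so that they overlap only in ways compatible with the blow-up description of $S$ coming from the Bass-Serre tree of $A\ast B$. Using this, one builds an $F_N$-equivariant refinement $\widehat T\in\overline{cv}_N$ of $T$ together with an $F_N$-equivariant alignment-preserving map $\widehat T\to S$, exhibiting compatibility of $T$ with the $\calz$-splitting $S$ and contradicting $\calz$-averseness. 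The technical heart is to keep edge stabilizers cyclic at the gluing sites and to verify alignment preservation, both of which should follow from the mixing-based overlap control together with the very small property of $T$.

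For the \emph{in particular} clause: if no proper free factor of $F_N$ is elliptic in $T$, then by the first assertion every proper free factor $A$ acts discretely on its nondegenerate minimal subtree $T_A$, hence does not act with dense orbits. By definition this means $T$ is arational.
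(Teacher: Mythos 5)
Your overall strategy --- contradict $\calz$-averseness by exhibiting a $\calz$-splitting compatible with $T$ --- is in the right spirit, but the step you yourself flag as ``the main obstacle'' is a genuine gap, and it is exactly where the mathematical content of the lemma lives. To assemble the local collapse maps $gT_A\to gS_A$ into a global alignment-preserving map out of a refinement of $T$, you need the translates $\{gT_A\}_{g\in F_N}$ to form a transverse family (pairwise intersections degenerate) in $T$. This does not follow from mixing; if anything, mixing pushes in the opposite direction, since it forces translates of any nondegenerate subtree to cover any given segment, so the $gT_A$ overlap heavily. Moreover, your $T_A$ carries mixed dynamics (a discrete part and a dense part), for which no general transverse-family statement is available. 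The paper's proof avoids this by not working with $A$ and $T_A$ at all: it takes the Levitt decomposition of $T_A$, extracts a vertex group $B\subseteq A$ that is a free factor acting with \emph{dense orbits} on its minimal subtree $T_B$, and invokes Reynolds' result (\cite[Lemma~3.10]{Rey1}) that for such a $B$ the family $\{gT_B\}$ is a transverse family; mixing then upgrades this to a transverse covering. Finally, rather than building the compatible splitting by hand, the paper cites \cite[Proposition~4.23]{Hor}: the stabilizer of a subtree in a transverse covering of a mixing $\calz$-averse tree cannot be a free factor (here it would be $B$, by \cite[Corollary~6.4]{Rey}). Your sketch silently assumes the content of both of these results.

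Two further points. First, if $T_A$ itself has dense $A$-orbits, your $S_A$ is a single point, so the ``blow-up'' of the Bass--Serre tree of $A\ast B$ degenerates to that Bass--Serre tree, and your compatibility claim becomes the assertion that $T$ is compatible with the free splitting $A\ast B$ --- a strong statement for which you have offered no argument. Second, the \emph{in particular} clause in your last paragraph is fine: a discrete action on a nondegenerate minimal subtree has non-dense orbits, so a non-elliptic proper free factor neither is elliptic nor acts with dense orbits, which is the definition of arationality used in the paper.
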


\begin{proof}
Assume towards a contradiction that $T_A$ is not simplicial. Since $T$ has trivial arc stabilizers, the Levitt decomposition of $T_A$ \cite{Lev} has trivial arc stabilizers (it may be reduced to a point in the case where $T_A$ has dense orbits). Let $B\subseteq A$ be a free factor of $A$ (hence of $F_N$) which is a vertex group of this decomposition. The $B$-minimal subtree $T_B$ of $T$ (which is also the $B$-minimal subtree of $T_A$) has dense orbits, so by \cite[Lemma~3.10]{Rey1} the family $\{g\overline{T_B}|g\in F_N\}$ is a transverse family in $T$. As $T$ is mixing, it is a transverse covering. In addition, by \cite[Corollary 6.4]{Rey} the stabilizer of $T_B$, and therefore the stabilizer of $\overline{T_B}$, is equal to $B$. But by \cite[Proposition 4.23]{Hor}, the stabilizer of a subtree in a transverse covering of a mixing $\calz$-averse tree cannot be a free factor (in fact, it cannot be elliptic in a $\calz$-splitting of $F_N$), so we get a contradiction.
\end{proof}

A collection $\mathcal{A}$ of subgroups of $F_N$ is a \emph{free factor system} if it coincides with the set of nontrivial point stabilizers in some simplicial $F_N$-tree with trivial arc stabilizers. There is a natural order on the collection of free factor systems, by saying that a free factor system $\cala$ is \emph{contained} in a free factor system $\cala'$ whenever every factor in $\cala$ is contained in one of the factors in $\cala'$. 

\begin{prop}\label{description-stab}
Let $T\in\overline{cv}_N$ be mixing and $\calz$-averse. Then either $T$ is dual to an arational measured lamination on a closed hyperbolic surface with finitely many points removed, or else the collection of point stabilizers in $T$ is a free factor system.
\end{prop}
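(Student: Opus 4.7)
The strategy is to reduce to the case where $T$ has dense $F_N$-orbits and trivial arc stabilizers, and then invoke the structure theory for such trees. First, I would show that $T$ has trivial arc stabilizers: if some arc were stabilized by a non-trivial cyclic subgroup $\langle g\rangle$, an equivariant collapse of $T$ would produce a $\calz$-splitting $T'$ of $F_N$ together with an alignment-preserving map $T\to T'$, making $T$ compatible with $T'$ and contradicting $\calz$-averseness (this property is essentially established in \cite{Hor}).

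Second, I would show that $T$ has dense $F_N$-orbits via the Levitt decomposition of $T$ as a graph of actions, whose vertex trees are either reduced to points or acted upon with dense orbits by their stabilizers. Since arc stabilizers are trivial, the edge groups of this decomposition are trivial, so the decomposition is a free splitting of $F_N$ and the vertex groups $G_v$ are free factors of $F_N$. If the decomposition were non-trivial, the $G_v$ would be proper free factors; Lemma~\ref{factor-action} would then force each dense-orbit vertex tree to be simplicial, hence reduced to a point. But then $T$ itself would be simplicial with trivial edge stabilizers---a $\calz$-splitting compatible with itself---again contradicting $\calz$-averseness.

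Having reduced to $T$ with dense orbits and trivial arc stabilizers, I would case-split on whether a proper free factor is elliptic in $T$. If not, Lemma~\ref{factor-action} yields arationality, and Reynolds's classification~\cite{Rey} gives either $T$ free (point stabilizers trivial, forming the empty free factor system) or $T$ dual to an arational lamination on a once-punctured hyperbolic surface (first alternative). If some proper free factor is elliptic, I would invoke the Rips--Bestvina--Feighn--Guirardel structure theory: $T$ is then dual to a measured foliation on a band complex $K$ with $\pi_1(K)=F_N$, admitting a graph-of-actions decomposition into surface (Seifert) and exotic (Levitt/thin) pieces glued along arcs. The mixing hypothesis promotes this decomposition to a transverse covering of $T$, and Proposition~4.23 of \cite{Hor} restricts the subtree stabilizers sharply enough to force $K$ either to be a single surface (first alternative) or to realize the point stabilizers as the vertex groups of a free splitting (second alternative).

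The main obstacle is the last case: the delicate structure theory for dense-orbit very small $F_N$-trees, and the proper use of Proposition~4.23 of \cite{Hor}, must be combined carefully to land in exactly one of the two stated alternatives. The multi-punctured surface subcase is especially subtle because boundary curves there fail to form a free factor system (being linked by a surface relation), which is exactly the reason the surface case must appear as its own alternative.
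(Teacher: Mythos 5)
Your preliminary reductions (trivial arc stabilizers and dense orbits for a mixing $\calz$-averse tree) are fine, and your first branch --- no proper free factor elliptic, hence $T$ arational by Lemma~\ref{factor-action}, hence free or dual to a lamination on a once-holed surface by Reynolds --- is correct. The genuine gap is in your second branch. You propose to handle the case where some proper free factor is elliptic by realizing $T$ as dual to a foliated band complex and letting the Rips--Bestvina--Feighn--Guirardel decomposition, mixing, and \cite[Proposition~4.23]{Hor} ``force'' the dichotomy. But this is precisely the statement to be proved, not an argument for it: you never say how the surface/thin pieces of the band complex yield that the point stabilizers assemble into a free factor system, and the machinery you invoke applies to geometric trees, whereas a general very small dense-orbits tree need not be geometric (one must pass to geometric approximations and transfer the structure back, which is exactly the delicate part you flag as ``the main obstacle'' and then do not address). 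As written, the multi-punctured surface case and the genuinely non-geometric case are both unresolved.

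For comparison, the paper splits cases differently and avoids Rips theory entirely in the non-surface branch. It asks whether there exists a free splitting of $F_N$ in which \emph{all point stabilizers of $T$} are elliptic. If not, $T$ is of surface type in the sense of \cite[Section~5]{Hor1} and $\calz$-averseness forces the skeleton of the dynamical decomposition to be a point, giving the surface alternative (this is where the heavy machinery lives, and it is quoted from \cite{Hor1} rather than re-derived). If such a free splitting exists, one takes the \emph{smallest} free factor system $\cala$ in which all point stabilizers are elliptic; Lemma~\ref{factor-action} says each $A\in\cala$ acts discretely on its minimal subtree $T_A$, which has trivial arc stabilizers, so either $A$ is elliptic in $T$ or $T_A$ is a free splitting of $A$ --- and the latter would produce a strictly smaller free factor system containing the point stabilizers, contradicting minimality. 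Hence $\cala$ equals the set of point stabilizers. I would encourage you to adopt this case split: it localizes the structure theory to the branch where it is unavoidable and replaces your band-complex argument with a two-line minimality argument.
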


\begin{proof}
First assume that there does not exist any free splitting of $F_N$ in which all point stabilizers of $T$ are elliptic. Then, with the terminology of \cite[Section 5]{Hor1}, the tree $T$ is of surface type (by the same argument as in \cite[Lemma 5.8]{Hor1}). Since $T$ is $\calz$-averse, the skeleton of the dynamical decomposition of $T$ is reduced to a point, in other words $T$ is dual to an arational measured lamination on a surface. 

Assume now that there exists a free splitting $S$ of $F_N$ in which all point stabilizers of $T$ are elliptic. Let $\mathcal{A}$ be the smallest free factor system such that every point stabilizer of $T$ is contained within some factor in $\mathcal{A}$. By Lemma \ref{factor-action}, each factor $A$ in $\mathcal{A}$ acts discretely on its minimal subtree $T_A$. In addition $T_A$ has trivial arc stabilizers because $T$ has trivial arc stabilizers. So either $A$ is elliptic in $T$, or else $T_A$ is a free splitting of $A$. The second situation cannot occur, as otherwise the point stabilizers in $T_A$ would form a free factor system of $A$, contradicting the minimality of $\cala$. Hence $\mathcal{A}$ coincides with the collection of point stabilizers of $T$. 
\end{proof}

We will also establish a characterization of the mixing representatives in a given equivalence class of $\calz$-averse trees. Before that, we start with the following lemma.

\begin{lemma}\label{lem_mix}
Let $T,T'\in\overline{cv}_N$. Assume that $T$ has dense orbits, and that there exists an $F_N$-equivariant alignment-preserving map $p:T\to T'$.
\\ Then either $p$ is a bijection, or else there exists $g\in F_N$ that is elliptic in $T'$ but not in $T$. In the latter case, some point stabilizer in $T'$ is non-cyclic. 
\end{lemma}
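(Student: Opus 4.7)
Since $p$ is alignment-preserving but not bijective, there exist distinct $x,y\in T$ with $p(x)=p(y)=:v\in T'$, and the arc $I=[x,y]$ is collapsed by $p$ to $v$. Write $H=\mathrm{Stab}_{T'}(v)\le F_N$ and $Y=p^{-1}(v)$; the latter is an $H$-invariant convex subtree of $T$ containing $I$. Equivariance of $p$ gives the key identity that for any $y'\in Y$ and $g\in F_N$, if $gy'\in Y$ then $gv=v$, i.e.\ $g\in H$. The plan is to use density of $F_N$-orbits on $T$ to find a point $x_0\in I$ with infinitely many $g\in F_N$ satisfying $gx_0\in I\subset Y$; all such $g$ automatically lie in $H$, so $H\cdot x_0\cap I$ is infinite. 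Concretely, dense orbits (combined with the very small structure) imply that any non-degenerate arc of $T$ has infinitely many overlapping $F_N$-translates, and a Fubini-type pigeonhole over $I$ then selects such an $x_0$.

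Next I would argue by contradiction that $H$ contains a hyperbolic element of $T$. If every $h\in H$ were elliptic in $T$, then since $T$ has trivial arc stabilizers (a dense-orbit tree in $\overline{cv}_N$ is very small with trivial arc stabilizers), each non-trivial elliptic fixes a unique point of $T$, and the classical fact that two elliptic isometries of a tree with distinct fixed points have a hyperbolic product forces all of $H$ to share a common fixed point $z\in T$. Then $H\cdot x_0$ lies on the sphere of radius $d(x_0,z)$ around $z$; since the distance from $z$ along any geodesic in $T$ is piecewise linear with slopes $\pm 1$ and hence convex, this sphere meets any arc in at most two points, contradicting the infinitude of $H\cdot x_0\cap I$. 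Thus some $g\in H$ is hyperbolic in $T$, and being in $H$ it fixes $v$, so is elliptic in $T'$ but not in $T$, giving the first conclusion.

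For the non-cyclic claim it suffices to show $H$ itself is non-cyclic. If $H$ is trivial or $H=\langle g_0\rangle$ with $g_0$ elliptic in $T$, the sphere argument above again gives $|H\cdot x_0\cap I|\le 2$, contradicting infinitude. If $H=\langle g_0\rangle$ with $g_0$ hyperbolic in $T$, then $H\cdot x_0=\{g_0^n x_0\}_{n\in\mathbb{Z}}$ is a discrete subset of $T$ whose consecutive translates are separated by at least $\ell_T(g_0)>0$, so it meets the bounded arc $I$ in only finitely many points -- again a contradiction. Hence $H$ is non-cyclic, producing a non-cyclic point stabilizer at $v\in T'$. The main technical obstacle I anticipate is the initial abundance step: producing infinitely many $g\in F_N$ with $gx_0\in I$ for a fixed $x_0$. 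This is where the dense-orbit hypothesis is used substantively -- via an indecomposability-type property of the $F_N$-dynamics on $T$ (arcs have many overlapping translates) together with a measure/pigeonhole argument to upgrade from ``many $g$ overlap $I$'' to ``many $g$ hit a common point'' -- and I expect it to require the most care.
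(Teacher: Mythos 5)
Your reduction to the two elementary facts in the second and third paragraphs is fine: granting that some $x_0\in I$ has infinitely many $F_N$-translates inside $I$, the observation that every such translating element lies in $H=\mathrm{Stab}_{T'}(v)$, the sphere argument ruling out a global fixed point for $H$, and the discreteness argument ruling out $H$ cyclic hyperbolic are all correct (and are essentially the same easy endgame as in the paper, which deduces both conclusions from the fact that $\mathrm{Stab}(Y)$ acts non-discretely on $Y$). The problem is that the ``abundance step'' you flag at the end is not a technical loose end but the entire content of the lemma, and it is precisely what the paper does not prove from scratch: the paper observes that the fibers $p^{-1}(x)$ form a transverse family of subtrees of $T$ and invokes Reynolds' Proposition~7.6, which says that in a very small tree with dense orbits the stabilizer of a nondegenerate member of a transverse family is nontrivial and acts non-discretely. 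That result rests on the structure theory of dense-orbit trees (Rips machine / transverse coverings), not on the definition of dense orbits.

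Moreover, the route you sketch to the abundance step does not work as stated. First, density of orbits does not formally give that an orbit of a point even meets a prescribed nondegenerate arc $I$: orbit points can accumulate onto interior points of $I$ from complementary directions at branch points of $T$, of which there may be a dense set in $I$. Second, the Fubini/pigeonhole upgrade from ``infinitely many $g$ with $g I\cap I$ nondegenerate'' to ``some $x_0$ with $gx_0\in I$ for infinitely many $g$'' is invalid without a uniform lower bound on the overlap lengths: writing $A_n=I\cap g_n^{-1}I$, one can have $\sum_n|A_n|=\infty$ while the $A_n$ are essentially disjoint (compare $A_n=[\tfrac{1}{n+1},\tfrac1n]\subseteq[0,1]$), so that no point lies in infinitely many $A_n$; and $\int_I\#\{n:x\in A_n\}\,dx=\infty$ does not force the integrand to be infinite anywhere. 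Finally, even the input ``$I$ has infinitely many overlapping translates'' is itself a nontrivial dynamical statement requiring the same machinery. So as written the proposal has a genuine gap at its key step; to close it you would either need to prove a quantitative recurrence statement for arcs in dense-orbit very small trees, or simply cite the transverse-family result as the paper does.
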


\begin{proof}
Notice that surjectivity of $p$ follows from the minimality of $T'$. The collection of all subtrees of the form $p^{-1}(\{x\})$ with $x\in T'$ is a transverse family in $T$. If $p$ is not a bijection, then one of the subtrees $Y$ in this family is nondegenerate, and \cite[Proposition~7.6]{Rey} implies that its stabilizer $A=\stab(p(Y))$ is nontrivial, and that the $A$-action on $Y$ is not discrete. This implies that the minimal $A$-invariant subtree of $Y$ is not reduced to a point, and $A$ contains an element that acts hyperbolically on $T$, while it fixes a point in $T'$. In addition $A$ is not cyclic, so the last assertion of the lemma holds.
\end{proof}

\begin{prop}\label{mixing}
A $\calz$-averse tree $T$ is mixing if and only if for all $T'\approx T$, every element of $F_N$ that is elliptic in $T'$ is also elliptic in $T$.
\\ If $T$ is dual to an arational lamination on a surface, then all trees in the $\approx$-class of $T$ are mixing.
\end{prop}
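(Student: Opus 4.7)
The plan is as follows. For the forward direction, suppose $T$ is mixing and let $T'\approx T$. The Horbez theorem on mixing representatives quoted above yields an $F_N$-equivariant alignment-preserving map $q:T'\to T$, and any $g\in F_N$ fixing a point $x\in T'$ then fixes $q(x)\in T$ by equivariance. This direction is immediate and uses only the mapping property of mixing representatives.

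For the backward direction, I would let $T_0$ be a mixing representative of the $\approx$-class of $T$, obtained from the Horbez theorem, yielding an $F_N$-equivariant alignment-preserving map $p:T\to T_0$. It suffices to show that $p$ is a bijection; once that is in hand, alignment-preserving bijections send segments to segments and intertwine the $F_N$-action, so a finite cover of a segment by translates in $T_0$ pulls back to a finite cover of the corresponding segment by translates in $T$, transferring mixing from $T_0$ to $T$.

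To apply Lemma \ref{lem_mix} to $p$, I need $T$ to have dense orbits, and this is the main technical point. The claim is that every $\calz$-averse tree automatically has dense orbits: if the Levitt decomposition of $T$ had a non-trivial simplicial skeleton, then collapsing each indecomposable vertex subtree to a point would produce a minimal simplicial $F_N$-tree $\bar T$ with cyclic (possibly trivial) edge stabilizers, i.e.\ a $\calz$-splitting, and the collapse map $T\to\bar T$ is alignment-preserving, so $T$ is a common refinement of itself and $\bar T$ and hence compatible with the $\calz$-splitting $\bar T$, contradicting $\calz$-averseness. Lemma \ref{lem_mix} then gives the dichotomy: either $p$ is a bijection (as desired), or there exists $g\in F_N$ elliptic in $T_0$ but not in $T$. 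Since $T_0\approx T$, the elliptic hypothesis rules out the latter.

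For the final assertion, let $T$ be dual to an arational lamination on a surface and let $T'\approx T$. Since arational trees are mixing, the Horbez theorem supplies an alignment-preserving map $p:T'\to T$, and $T'$ has dense orbits by the argument above. Lemma \ref{lem_mix} applied to $p$ gives either that $p$ is a bijection (so $T'$ is mixing) or that some point stabilizer of $T$ is non-cyclic; but point stabilizers in a tree dual to an arational surface lamination are cyclic, generated by the boundary curves of the surface, so the second possibility is ruled out. The main obstacle I anticipate is establishing that $\calz$-averse trees have dense orbits; the rest of the argument then flows quickly from Lemma \ref{lem_mix} and the structure of mixing representatives.
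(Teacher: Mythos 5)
Your proof is correct and follows essentially the same route as the paper's: the forward direction via the alignment-preserving map onto a mixing representative, and the backward direction and the surface case both via Lemma~\ref{lem_mix} (the paper phrases the backward direction contrapositively, but the content is identical). Your explicit check that $\calz$-averse trees have dense orbits — a hypothesis of Lemma~\ref{lem_mix} that the paper leaves implicit — is the standard argument and is a welcome addition.
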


\begin{proof}
If $T$ is mixing, then all trees $T'$ in its $\approx$-class admit an $F_N$-equivariant alignment-preserving map onto $T$, so every element elliptic in $T'$ is also elliptic in $T$. If $T$ is not mixing, then it admits an $F_N$-equivariant alignment-preserving map $p$ onto a mixing tree $\overline{T}$ in the same $\approx$-class, and $p$ is not a bijection. By Lemma~\ref{lem_mix}, there exists an element of $F_N$ that is elliptic in $\overline{T}$ but not in $T$. The second assertion in Proposition~\ref{mixing} follows from the last assertion of Lemma~\ref{lem_mix} because if $T$ is dual to an arational lamination on a surface, then $T$ is mixing and all point stabilizers in $T$ are cyclic.
\end{proof}

\begin{cor}\label{cor-mixing}
Let $T$ be a $\calz$-averse tree, and let $\cala$ be a maximal free factor system elliptic in $T$. Let $\overline{T}$ be a mixing representative of the $\approx$-class of $T$. 
\\ Then $\cala$ is a maximal elliptic free factor system in $\overline{T}$ if and only if $T$ is mixing.
\end{cor}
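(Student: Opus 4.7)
The plan is to reduce the statement to Proposition~\ref{mixing}, together with the description of point stabilizers of mixing $\calz$-averse trees given by Proposition~\ref{description-stab}.

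For the forward direction, if $T$ is mixing then $T$ and $\overline{T}$ are two mixing representatives of the same $\approx$-class, so by Horbez's theorem recalled above they admit an $F_N$-equivariant alignment-preserving bijection between each other. Such a bijection preserves the collection of elliptic subgroups, so a free factor system is elliptic in $T$ if and only if it is elliptic in $\overline{T}$; in particular $\cala$ is maximal elliptic in $T$ if and only if it is maximal elliptic in $\overline{T}$.

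For the other direction I would argue by contraposition: assuming $T$ is not mixing, I will exhibit an elliptic free factor system in $\overline{T}$ strictly containing $\cala$. Proposition~\ref{mixing} supplies an element $g\in F_N$ that is elliptic in $\overline{T}$ but not in $T$, and it also guarantees that $\overline{T}$ is not dual to an arational lamination on a surface (otherwise every tree in its $\approx$-class, including $T$, would be mixing). Proposition~\ref{description-stab} then yields a free factor system $\cala'$ whose factors are exactly the nontrivial point stabilizers of $\overline{T}$. Since any $\calz$-averse tree admits an $F_N$-equivariant alignment-preserving map onto each mixing tree in its $\approx$-class, every element elliptic in $T$ is elliptic in $\overline{T}$; hence each factor of $\cala$ is contained in a factor of $\cala'$, i.e.\ $\cala$ lies below $\cala'$ in the natural order on free factor systems. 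On the other hand $g$ sits inside some factor of $\cala'$ but cannot lie in any factor of $\cala$ (all elements of such factors are elliptic in $T$, whereas $g$ is not), so the containment $\cala\leq\cala'$ is strict. This contradicts the maximality of $\cala$ as an elliptic free factor system of $\overline{T}$.

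The only delicate point in this plan is making sure the surface-dual case is handled correctly, and this is cleanly ruled out by the second statement of Proposition~\ref{mixing} the moment $T$ fails to be mixing; everything else is a direct comparison of elliptic subgroups in $T$ and $\overline{T}$ via the alignment-preserving map between them.
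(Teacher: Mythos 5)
Your argument is correct and is essentially the paper's own proof: the forward direction via the alignment-preserving bijection between the two mixing representatives, and the converse by combining Proposition~\ref{mixing} (which supplies the element $g$ elliptic in $\overline{T}$ but not in $T$ and rules out the surface-dual case) with Proposition~\ref{description-stab} (which makes the point stabilizers of $\overline{T}$ a free factor system strictly containing $\cala$). The only difference is that you spell out the comparison of $\cala$ with the point-stabilizer system of $\overline{T}$ in slightly more detail than the paper does.
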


\begin{proof}
If $T$ is mixing, then $T$ and $\overline{T}$ have the same point stabilizers, so the conclusion is obvious. If $T$ is not mixing, then Proposition~\ref{mixing} implies that $\overline{T}$ is not dual to a lamination on a surface, and that there exists an element $g\in F_N$ not contained in $\cala$ that is elliptic in $T$. Proposition \ref{description-stab} shows that the collection of elliptic subgroups in $\overline{T}$ is a free factor system of $F_N$, and this free factor system strictly contains $\cala$.
\end{proof}

\section{Train-tracks, indices and stratifications}\label{sec-strat}

\subsection{Train-tracks and carried trees}

\begin{de}[\textbf{\emph{Train-tracks}}] \label{d:train-track}
A \emph{train-track} $\tau$ is the data of 
\begin{itemize}
\item a minimal, simplicial $F_N$-tree $S^\tau$ with trivial edge stabilizers,
\item an equivalence relation $\sim^\tau_V$ on the set $V(S^\tau)$ of vertices of $S^\tau$, such that if $v\sim^\tau_V v'$, then $gv\sim^\tau_V gv'$ for all $g\in F_N$, and such that no two adjacent vertices are equivalent,
\item for each $\sim^\tau_V$-class $X$, an equivalence relation $\sim^\tau_{D,X}$ on the set $\cald(X)$ of directions at the vertices in $X$, such that if two directions $d,d'\in\cald(X)$ are equivalent, then for all $g\in F_N$, the directions $gd,gd'\in\cald(gX)$ are also equivalent.
\end{itemize}
Equivalence classes of directions at $X$ are called \emph{gates} at $X$.
\\ We denote by $\mathcal{A}(\tau)$ the free factor system made of all point stabilizers in $S^{\tau}$. \end{de}

\begin{rk}
Including an equivalence relation on the vertex set of $S^\tau$ in the definition may look surprising to the reader, as this is not standard in train-track theory for free groups. Roughly speaking, the equivalence classes of vertices correspond to branch points in trees carried by a track; this is explained in more detail in the appendix of the paper. 
\end{rk}
 
We will usually also impose that the train-tracks we work with satisfy some additional assumptions. Let $\tau$ be a train-track. A pair $(d,d')$ of directions based at a common vertex of $S^\tau$ is called a \emph{turn}, and is said to be \emph{legal} if $d$ and $d'$ are inequivalent. A subtree $A \subset S^\tau$ \emph{crosses the turn} $(d,d')$ if the intersection of $A$ with both directions $d$ and $d'$ is non-empty. We say that $A$ is \emph{legal} in $\tau$ if each turn crossed by $A$ is legal in $\tau$.

\begin{de}[\textbf{\emph{Admissible train-tracks}}]\label{adm} A train-track $\tau$ is \emph{admissible} if for every vertex $v\in S^\tau$, there exist three pairwise inequivalent directions $d_1,d_2,d_3$ at $v$ such that for all $i\in\{1,2,3\}$, there exists an element $g_i\in F_N$ acting hyperbolically on $S^\tau$ whose axis in $S^\tau$ is legal and crosses the turn $(d_i,d_{i+1})$ (mod $3$). 
\end{de}

In particular, there are at least three gates at every equivalence class of vertices in admissible train-tracks. We will call a triple $(g_1,g_2,g_3)$ as in Definition~\ref{adm} a \emph{tripod of legal elements} at $v$, and we say that their axes form a \emph{tripod of legal axes} at $v$.
\\
\\
\indent Edges in simplicial trees are given an affine structure, which enables us to consider maps from simplicial trees to $\mathbb{R}$-trees that are linear on edges. If $f:S\to T$ is linear then there is a unique metric on $S$ for which $f$ is isometric when restricted to every edge. We say that this is the \emph{metric on $S$ determined by the linear map} $f$.

\indent Train-tracks naturally arise from morphisms between trees. Suppose that $f:S \to T$ is an $F_N$-equivariant map from a simplicial $F_N$-tree $S$ to an $F_N$-tree $T$ which is linear on edges and does not collapse any edge of $S$ to a point. There is an induced equivalence relation $\sim_V$ on the set of vertices of $S$ given by saying that two vertices are equivalent if they have the same $f$-image in $T$.  Let $w$ be a point in $T$ and let $X:=f^{-1}(w)\cap V(S)$ be its associated equivalence class in $V(S)$. As soon as $X\neq\emptyset$, equivariance of $f$ implies that its (setwise) stabilizer $\stab(X)$ in $S$ is equal to the stabilizer of the point $w$ in $T$. Let $\mathcal{D}(X)$ be the set of directions in $S$ based at points in $X$. Since $f$ does not collapse any edge, it induces an equivalence relation $\sim_{D,X}$ on $\mathcal{D}(X)$, where two directions in $\mathcal{D}(X)$ are equivalent if they have germs that map into the same direction at $w$ in $T$. The set of $\stab(X)$-orbits of equivalence classes in $\mathcal{D}(X)$ then maps injectively into the set of $\stab(w)$-orbits of directions at $w$ in $T$. 
We call this collection of equivalence classes $\tau_f$ the \emph{train-track structure on $S$ induced by $f$}. This will be the typical example where a tree $T$ is carried by the train-track $\tau_f$. Our general definition of carrying is slightly more technical, as it involves a bit more flexibility with respect to the definition of $f$ at exceptional classes of vertices, defined as follows. 

\begin{de}[\textbf{\emph{Exceptional classes of vertices}}]
Let $\tau$ be an admissible train-track. An equivalence class $X$ of vertices of $S^\tau$ is called \emph{exceptional} if there are exactly $3$ gates at $X$. 
\end{de}

\begin{de}[\textbf{\emph{Specialization}}]\label{de-specialization} Let $\tau$ and $\tau'$ be two train-tracks, and let $v_0\in S^\tau$ be such that $[v_0]$ is an exceptional equivalence class. We say that $\tau'$ is a \emph{specialization} of $\tau$ at $[v_0]$ if 
\begin{itemize}
\item $S^{\tau'}=S^\tau$,
\item there exists a vertex $v_1\in S^\tau$, not in the same orbit as $v_0$, such that $\sim_V^{\tau'}$ is the coarsest $F_N$-invariant equivalence relation finer than $\sim_V^\tau$ and such that $v_1\sim_V^{\tau'} v_0$,
\item if $d,d'$ are two directions not based at any vertex in the orbit of $[v_0]$, then $d\sim^{\tau'} d'$ if and only if $d\sim^{\tau}d'$, 
\item every direction at a vertex in $[v_0]$ is equivalent to some direction at a vertex which is $\tau$-equivalent to $v_1$. 
\end{itemize}
\end{de}

\begin{de}[\textbf{\emph{Carrying}}]\label{de-carry}
We say that an $F_N$-tree $T$ is \emph{carried} by a train-track $\tau$ if there is an $F_N$-equivariant map $f:S^\tau \to T$, which is linear on edges and does not collapse any edge to a point, such that the train-track structure $\tau_f$ induced by $f$ is obtained from $\tau$ by a finite (possibly trivial) sequence of specializations.

\noindent In this situation, we call the map $f$ a \emph{carrying map} (with respect to $\tau$). 
\end{de}

\begin{rk}
The motivation for introducing specializations and allowing them in the definition of carrying is again explained in the appendix; specializations will appear naturally later in the paper when we start performing folds on tracks.
\end{rk}

\begin{lemma}\label{countable}
The collection of all train-tracks $\tau$ for which there exists a tree $T\in\overline{cv}_N$ carried by $\tau$ is countable.
\end{lemma}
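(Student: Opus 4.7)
The plan is to encode each train-track $\tau$ admitting a carried tree in $\overline{cv}_N$ by countably many finite pieces of data, arranged according to the three layers $(S^\tau,\sim^\tau_V,\sim^\tau_D)$. For the underlying tree: since $S^\tau$ is a minimal simplicial $F_N$-tree with trivial edge stabilizers, Bass--Serre theory identifies $S^\tau$ with the universal cover of a finite graph of groups with trivial edge groups whose vertex groups form part of a free factor system of $F_N$. As there are only countably many finite graphs and countably many finitely generated subgroups of $F_N$, there are only countably many possibilities for $S^\tau$ up to $F_N$-equivariant isomorphism.

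For the vertex relation, fix $S^\tau$ and a carrying map $f:S^\tau\to T$. Each $\sim^{\tau_f}_V$-class has the form $f^{-1}(p)\cap V(S^\tau)$ with stabilizer $\stab_T(p)$, which is finitely generated of rank $\leq N$ for $T\in\overline{cv}_N$. So $\sim^{\tau_f}_V$ is encoded by a partition of the finite set $V(S^\tau)/F_N$, a finitely generated stabilizer per block, and coset-representative identifications between the vertex orbits in each block --- a countable family of possibilities. The relation $\sim^\tau_V$ then differs from $\sim^{\tau_f}_V$ by reversing a finite sequence of specializations, each parametrized by a pair $(v_0,v_1)$ of representatives in different $F_N$-orbits together with an ambient coset element; a finite sequence of countable choices again yields countably many $\sim^\tau_V$.

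For the direction relation, given $(S^\tau,\sim^\tau_V)$, observe that for each $\sim^\tau_V$-class $X$ the direction set $\cald(X)$ decomposes under $\stab(X)$ into orbits of the form $\cald(\tilde w)/\stab(\tilde w)$ for orbit representatives $\tilde w\in X$, each finite by local finiteness of $S^\tau$. Hence $\sim^\tau_D$ restricted to a single $F_N$-orbit of classes has only finitely many possibilities, and since there are only finitely many such orbits, only finitely many $\sim^\tau_D$ arise altogether.

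The main obstacle is the vertex-relation step: a $\sim^\tau_V$-class is merely a subset of its containing $\sim^{\tau_f}_V$-class, and subgroups of finitely generated groups need not be finitely generated, so one cannot directly enumerate the stabilizers of $\sim^\tau_V$-classes from a countable family. The fix is not to enumerate $\sim^\tau_V$ via its own class stabilizers but via the pair $(\tau_f,\text{specialization sequence})$, with $\tau_f$ ranging over a countable family (whose class stabilizers \emph{are} finitely generated) and the sequence consisting of finitely many operations each with countable parameter data.
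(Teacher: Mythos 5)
Your first two steps track the paper's own argument (countably many underlying trees $S^\tau$; the vertex relation pinned down by finitely many orbit representatives per class together with class stabilizers that are constrained by the carried tree). The genuine gap is in your third step, on the direction relation. The tree $S^\tau$ is \emph{not} locally finite: a vertex $v$ with infinite stabilizer has infinitely many adjacent edges, since $\stab(v)$ acts freely on them (edge stabilizers are trivial). So $\cald(X)$ is infinite whenever $\stab(X)$ is infinite, and although $\cald(X)/\stab(X)$ is indeed finite, the relation $\sim^\tau_{D,X}$ is \emph{not} determined by a relation on this finite quotient: the equivariance condition in Definition~\ref{d:train-track} only forces $\stab(X)$ to permute the gates, not to preserve each gate. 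For instance, if $X=\{v\}$ with $\stab(v)=\langle a\rangle$ and the edges at $v$ form a single orbit $\langle a\rangle\cdot e$, then for every $k\ge 1$ the partition of $\cald(X)$ into the sets $\{a^{kn+j}e\}_{n\in\mathbb{Z}}$, $0\le j<k$, is an admissible choice of gates; these are infinitely many distinct relations, all inducing the same relation on the one-point quotient. Hence ``only finitely many $\sim^\tau_D$ arise'' is false, and without invoking the carried tree one cannot even conclude countability, since a gate stabilizer could a priori be any subgroup of $\stab(X)$, of which there are uncountably many once $\stab(X)$ has rank at least two. The paper closes exactly this gap using the carrying hypothesis: each gate injects into the set of directions at a point of the very small tree $T$, so its stabilizer is at most cyclic, and each gate is then of the form $g\stab(B_i).E_i$ for a cyclic subgroup and a finite set $E_i$, yielding a countable enumeration. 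This use of carrying on the direction relation is the essential ingredient your proposal omits.

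A secondary remark on the vertex relation: the worry you raise about non-finitely-generated stabilizers of $\sim^\tau_V$-classes is legitimate, but your fix does not actually dispose of it. To reverse a specialization you must recover the finer partition of a $\tau_f$-class, i.e.\ the subsets $[v_0]$ and $[v_1]$ themselves, and encoding those subsets by countable data again requires their setwise stabilizers to lie in a countable family --- precisely the issue you set out to avoid; a pair of vertex representatives plus a coset element does not determine the split. The paper's (terser) route is to argue that every class stabilizer in $\tau$ is a point stabilizer of $T$, hence finitely generated by Gaboriau--Levitt, and then to encode each class as $g\stab(A_i).V_i$ with $V_i$ finite.
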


\begin{proof}
There are countably many minimal, simplicial $F_N$-trees with trivial edge stabilizers. In addition, if $\tau$ carries a tree $T\in\overline{cv}_N$, then the stabilizer of every equivalence class $X$ of vertices in $\tau$ is a finitely generated subgroup of $F_N$ (because every point stabilizer in a very small $F_N$-tree is finitely generated by \cite{GL95}). The set $X/\text{Stab}(X)$ is finite  because there are finitely many $F_N$-orbits of vertices in $S^\tau$, and if $v,gv$ are two vertices in $X$, then $g\in\text{Stab}(X)$. The stabilizer of every gate $[d]$ is at most cyclic, and again $[d]/\stab ([d])$ is finite. The equivalence relation on vertices is recovered by taking a finite set $A_1, \ldots, A_k$ of representatives of $F_N$-orbits of equivalence classes, and in each $A_i$ taking a finite set $V_i$ of vertices such that $\stab(A_i).V_i=A_i$. Each equivalence class is of the form $g\stab(A_i).V_i$, for some $g \in F_N$.  The equivalence relation on edges is recovered by taking a finite set $B_1, \ldots, B_l$ of representatives of $F_N$-orbits of gates in $\tau$, and for each $B_i$ taking a finite set $E_i$ of representatives of $F_N$-orbits of oriented edges determining the directions in $B_i$. Then each gate in $\tau$ is of the form $g\stab(B_i).E_i$ for some $g \in F_N$. Hence $\tau$ is determined by the simplicial tree $S^\tau$ and the finite family $(\{\stab{(A_i)}\}, \{ V_i \} , \{\stab(B_i)\}, \{E_i\})$, which gives a countable number of possible train-tracks. 
\end{proof}

We will also need the following observation.

\begin{lemma}\label{recurrent}
Let $\tau$ be an admissible train-track, and let $[a,b]\subseteq S^\tau$ be a legal segment. Then there exists an element $g\in F_N$ acting hyperbolically on $S^\tau$, whose axis in $S^\tau$ is legal and contains $[a,b]$. 
\end{lemma}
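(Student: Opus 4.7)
The plan is to first reduce to the case where $a$ and $b$ are vertices of $S^\tau$ by extending $[a,b]$ along any edges whose interiors contain $a$ or $b$ (this preserves legality, since no turns occur inside edges), and then to construct an explicit hyperbolic $g\in F_N$ whose axis contains $[a,b]$ and is legal. Let $d_a$ denote the direction at $b$ toward $a$, and $d_b$ the direction at $a$ toward $b$.

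By admissibility at $b$, there exist three pairwise inequivalent directions at $b$ lying on legal hyperbolic axes; at most one of them is equivalent to $d_a$, so I can pick a legal axis $\ell^+$ of some hyperbolic $g^+\in F_N$ whose two directions at $b$ are both inequivalent to $d_a$. Symmetrically, at $a$ I obtain a legal axis $\ell^-$ of some $g^-\in F_N$ whose two directions at $a$ are inequivalent to $d_b$. These conditions force $\ell^+\cap[a,b]=\{b\}$ and $\ell^-\cap[a,b]=\{a\}$, so $\ell^+$ and $\ell^-$ are disjoint, with bridge $[a,b]$.

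I then set $g:=(g^-)^{-1}g^+$ and $a':=(g^-)^{-1}a$, and claim the axis of $g$ is legal and contains $[a,b]$. The geodesic from $b$ to $gb$ in $S^\tau$ decomposes as
\[
[b,a]\cup[a,a']\cup[a',(g^-)^{-1}b]\cup[(g^-)^{-1}b,gb],
\]
where $[a,a']\subseteq\ell^-$ and $[(g^-)^{-1}b,gb]\subseteq(g^-)^{-1}\ell^+$ are both legal. Each of the three intermediate turns, at $a$, at $a'$, and at $(g^-)^{-1}b$, is legal by our construction of $\ell^\pm$ together with $F_N$-invariance of the equivalence relation on directions (the turns at $a'$ and $(g^-)^{-1}b$ being $(g^-)^{-1}$-images of appropriate legal turns at $a$ and $b$). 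The turn at $b$ between $g^{-1}[b,gb]$ and $[b,gb]$ pairs $d_a$ with the other direction of $\ell^+$ at $b$, hence is also legal. By $g$-equivariance, $\bigcup_{k\in\mathbb{Z}}g^k[b,gb]$ is therefore a $g$-invariant bi-infinite legal geodesic, i.e.\ the axis of $g$, which is hyperbolic of translation length $2L+L^++L^->0$ (with $L=d(a,b)$ and $L^\pm=|g^\pm|$), and contains $[a,b]$.

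The main obstacle is the bookkeeping of the four turns, but each one reduces, via $F_N$-equivariance of the equivalence relation on directions, to the two inequivalence conditions imposed when choosing $\ell^\pm$.
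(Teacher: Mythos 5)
Your proof is correct and follows essentially the same strategy as the paper's: choose legal hyperbolic axes through the two endpoints of $[a,b]$, meeting $[a,b]$ only at those endpoints and with legal attaching turns, and take the product of the corresponding hyperbolic elements. The only difference is that the paper cites Culler--Morgan for the fact that every turn of the axis of the product lies in a translate of $\ell^-\cup[a,b]\cup\ell^+$, whereas you verify this by hand via the fundamental domain $[b,gb]$; your turn-by-turn check is sound.
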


\begin{proof}
Without loss of generality, we can assume that $a$ and $b$ are vertices of $S^\tau$. Since $\tau$ is admissible, there exist elements $g,h\in F_N$ acting hyperbolically in $S^\tau$, whose axes $A_g,A_h$ are legal and pass through $a,b$ respectively, only meet $[a,b]$ at one extremity, and such that the subtree $Y:=A_g\cup [a,b]\cup A_h$ is legal. Standard properties of group actions on trees \cite{CM} imply that all turns in the axis of $gh$ are contained in translates of $Y$. So the axis of $gh$ is legal, and it contains $[a,b]$.
\end{proof}

The following two lemmas state that every tree $T\in\overline{cv}_N$  with trivial arc stabilizers is carried by an admissible train-track, and in addition the carrying map $f:S^\tau\to T$ is completely determined by the train-track structure. Given a free factor system $\cala$, we say that a tree $T\in\overline{cv}_N$ is an \emph{$\cala$-tree} if all subgroups in $\cala$ are elliptic in $T$.

\begin{lemma}\label{carrier-exist}
Let $\cala$ be a free factor system, and let $T\in\overline{cv}_N$ be an $\cala$-tree with trivial arc stabilizers.  Assume that $\cala$ is a maximal free factor system such that $T$ is an $\cala$-tree. Then there exists an admissible train-track $\tau$ such that $T$ is carried by $\tau$ and $\cala(\tau)=\cala$.
\end{lemma}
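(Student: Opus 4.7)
The plan is to construct a carrying map $f \colon S^\tau \to T$ and let $\tau$ be the induced train-track structure $\tau_f$, choosing $S^\tau$ as a suitable simplicial $\cala$-tree. Since $\cala$ is a free factor system, fix a decomposition $F_N = A_1 * \cdots * A_k * F_r$ realizing it and take $S_0$ to be the Bass--Serre tree of a graph-of-groups splitting along this decomposition, refined in the free complement $F_r$ (when nontrivial) so that every vertex of $S_0$ has valence at least $3$. Then $S_0$ is a simplicial $\cala$-tree with trivial edge stabilizers, whose nontrivial point stabilizers form precisely $\cala$, and vertices with nontrivial stabilizer automatically have infinite valence.

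Next, I would build $f$. Since each $A_i \in \cala$ is elliptic in $T$ and $T$ has trivial arc stabilizers, $A_i$ fixes a unique point $p_i \in T$; send each vertex of $S_0$ whose stabilizer is conjugate to $A_i$ to the corresponding $F_N$-translate of $p_i$. For orbits of trivial-stabilizer vertices, choose images in $T$ generically (for instance at branch points of $T$), and extend $F_N$-equivariantly and linearly on edges. Generic choice, possibly combined with subdividing and re-selecting images on the trivial-stabilizer orbits, lets us arrange that (i) no edge of $S_0$ is collapsed to a point, and (ii) at every vertex of $S_0$, the incident directions produce at least three distinct direction-germs in $T$. Setting $S^\tau := S_0$ and $\tau := \tau_f$, we obtain $\cala(\tau) = \cala$ and a candidate carrying map for which $\tau_f = \tau$ is itself the induced train-track structure (no nontrivial specialization needed).

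The delicate part is verifying admissibility: at every equivalence class $X$ of vertices we need three pairwise inequivalent directions together with a tripod of legal hyperbolic axes. The three-gate condition follows from (ii) and the valence condition on $S^\tau$. For the tripod, given any legal turn $(d_i,d_{i+1})$ at $X$, I would produce a hyperbolic element of $F_N$ whose axis in $S^\tau$ crosses this turn and consists entirely of legal turns by a ping-pong construction, leveraging the infinite valence at nontrivial-stabilizer vertices and the free sub-splitting corresponding to $F_r$ at trivial-stabilizer vertices; concretely, one takes short legal loops based at $X$ realizing each $d_i$ and concatenates high powers to form the $g_i$. The main obstacle is arranging such tripods simultaneously at every $F_N$-orbit of vertex classes; this may require iteratively refining $S^\tau$ (for instance by further blowing up the free complement) and adjusting the generic choices in $f$ until admissibility holds globally.
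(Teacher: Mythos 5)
Your overall template (build a simplicial $\cala$-tree $S_0$, map it to $T$, take $\tau=\tau_f$) matches the paper's, but the two points where the lemma actually has content are left open. First, you never reduce to the case where $\cala$ is a \emph{maximal} free factor system elliptic in $T$ (the paper does this at the outset); without that, distinct factors $A_i, A_j$, or a loop generator and $A_1$, may fix a common point of $T$, and then no choice of images at the remaining vertices prevents some edge from being collapsed or two directions at an $A_i$-vertex from being identified. Second, and more seriously, ``generic'' placement of the trivial-stabilizer vertices cannot do what you ask of it: to get three gates at $v$ you need $f(v)$ to be precisely the median of the images of its neighbours with three distinct germs there, which is a closed, non-generic condition, and admissibility is a \emph{recurrence} condition (whole legal axes of hyperbolic elements through each turn), which genericity does not touch at all. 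Your ping-pong step is where the proof really lives, and as stated it is circular: concatenating ``short legal loops'' creates new turns at the basepoint and at the concatenation points whose legality is exactly what is in question, and the lemma that promotes a legal segment to a legal axis (Lemma~\ref{recurrent}) already presupposes admissibility. You acknowledge this yourself (``the main obstacle is arranging such tripods simultaneously\dots this may require iteratively refining''), which is an admission that the argument is not closed.

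The paper closes these gaps with two concrete devices you would need substitutes for. When $\cala\neq\emptyset$ (assumed maximal), it uses a graph of groups with \emph{no} trivial vertex groups, so the map $f$ is forced (each vertex goes to the unique fixed point of its infinite stabilizer), maximality guarantees no edge is collapsed, and legal hyperbolic elements are manufactured as products of elliptic elements, exploiting that every vertex group is infinite. When $\cala=\emptyset$, the key external input is \cite[Corollary~5.3]{BR}: one chooses a free basis $\{a_1,\dots,a_N\}$ with $A=\langle a_1,a_2\rangle$ acting freely and \emph{discretely} on $T$ and no $a_i$ elliptic; then $\G_A=T_A/A$ is a rank-$2$ core graph whose vertices have valence at least $3$, and $f$ restricted to $T_A$ is an isometric embedding, so every turn crossed by $T_A$ is automatically legal and the axes of elements of $A$ supply the required tripods at every vertex. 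Your proposal has no mechanism playing the role of either device, so as written it does not establish admissibility.
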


\begin{proof}
We let $\cala/F_N:=\{[A_1],\dots,[A_k]\}$. If $\cala\neq\emptyset$, then (up to replacing the subgroups $A_i$ by appropriate conjugates) we can choose for $S$ the universal cover of the graph of groups depicted in Figure \ref{fig-Grushko}. There is a unique $F_N$-equivariant map $f:S\to T$ that is linear on edges: indeed, every vertex $v$ of $S$ has nontrivial stabilizer $G_v$, and must be sent by $f$ to the unique point in $T$ fixed by $G_v$.  The elements $a_i$ and the subgroups $A_2,\dots,A_k$ cannot fix the same point as $A_1$ in $T$ by maximality of $\cala$, so $f$ does not collapse any edge, and therefore $\tau_f$ is well-defined. Admissibility of $\tau_f$ can be checked by taking advantage of the fact that all vertex groups are infinite: one can construct the required legal elements by taking appropriate products of elliptic elements in $T$. 

We now assume that $\cala=\emptyset$. Let $\{a_1,\dots,a_N\}$ be a free basis of $F_N$ such that $A=\langle a_1,a_2 \rangle$ acts freely with discrete orbits on $T$, and no $a_i$ is elliptic in $T$. Such a basis exists as the reducing factors of $T$ (if there are any) form a bounded subset of $FF_N$, see \cite[Corollary 5.3]{BR}. Let $T_A$ be the minimal $A$-invariant subtree for the $A$-action on $T$. Let $\G_A=T_A/A$. Then $\G_A$ is a finite graph in $cv_2$ with one or two vertices. We extend $\G_A$ to a marked $F_N$--graph $\G$ with the same number of vertices, by attaching loops labelled by the elements $a_3,\dots,a_N$ at a fixed vertex of $\G_A$. Let $S$ be the universal cover of $\G$. The isometric embedding from $T_A$ into $T$ extends to a map $f:S \to T$, and $f$ does not collapse any edges because no element  $a_i$ is elliptic in $T$. The induced train-track $\tau_f$ is admissible as each vertex of $T_A$ has valence at least 3 and $f$ is an $F_N$-equivariant isometric embedding when restricted to $T_A$. 
\end{proof}

\begin{figure}
\begin{center}
\input{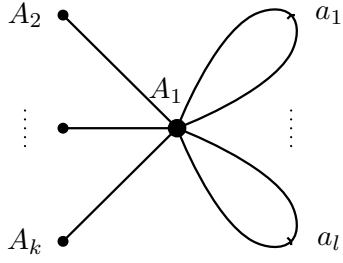}
\caption{The tree $S$ in the proof of Lemma \ref{carrier-exist}, in the case where $\cala\neq\emptyset$.}
\label{fig-Grushko}
\end{center}
\end{figure}

\begin{lemma}\label{carrier-unique}
Let $T\in\overline{cv}_N$, and let $\tau$ be an admissible train-track such that $T$ is carried by $\tau$. Then there exists a unique carrying map $f:S^{\tau}\to T$ for the train-track structure.
\\ Furthermore, the carrying map $f$ varies continuously on the set of trees that are carried by $\tau$ (in the equivariant Gromov--Hausdorff topology).
\end{lemma}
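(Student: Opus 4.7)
The main tool is admissibility: at every vertex $v \in S^\tau$, Definition~\ref{adm} gives a legal tripod $(g_1, g_2, g_3)$ of hyperbolic elements of $F_N$ whose legal axes $A_{g_i}$ in $S^\tau$ meet at $v$ in three pairwise distinct gates (Lemma~\ref{recurrent} lets us manufacture further such elements through any prescribed legal segment). The idea is to characterize $f(v)$ intrinsically inside $T$ via this tripod.

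\emph{Uniqueness.} First, for any hyperbolic $g$ whose axis $A_g \subseteq S^\tau$ is legal in the induced track $\tau_f$, I would check that the restriction $f|_{A_g}$ is locally injective: at each vertex of $A_g$ the two germs along $A_g$ sit in distinct gates of $\tau_f$, so they map to distinct directions in $T$. Since $f$ is linear on each edge, $f|_{A_g}$ is then an isometric embedding in the metric on $S^\tau$ determined by $f$, and its image is a $g$-invariant line in $T$, which must be the axis of $g$ in $T$. A combinatorial check using Definition~\ref{de-specialization}---specializations merge only \emph{exceptional} classes (three gates), and the fourth bullet keeps at least three gates at the merged class---shows that admissibility of $\tau$ descends to a legal tripod of $\tau_f$ at (the $\tau_f$-class of) $v$. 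Applying the observation above to these three elements, the three axes $A'_{g_i}$ in $T$ all pass through $f(v)$ and pairwise share exactly one of the three pairwise distinct image directions at $f(v)$. A short $\R$-tree argument then gives $A'_{g_1} \cap A'_{g_2} \cap A'_{g_3} = \{f(v)\}$. Since this triple intersection is intrinsic to $(T, g_1, g_2, g_3)$, the value $f(v)$ is uniquely determined; $F_N$-equivariance propagates this to every vertex of $S^\tau$, and the linear/non-collapsing condition then determines $f$ on every edge.

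\emph{Continuity.} Let $(T_n)_{n\in\mathbb{N}}$ be a sequence of trees carried by $\tau$ converging to $T$ in the equivariant Gromov--Hausdorff topology, with carrying maps $f_n : S^\tau \to T_n$. On $\overline{cv}_N$ this convergence is equivalent to pointwise convergence of the translation length functions $g \mapsto \ell_{T_n}(g)$. Fix $v$ together with the tripod $(g_1, g_2, g_3)$. The triple intersection point $f_n(v) \in T_n$, described as above, can be located by continuous functions of the translation length data (its coordinate along one axis is a Gromov-product-type expression in the $\ell_{T_n}$). Likewise, each edge length of $S^\tau$ in the $f_n$-metric is the translation length in $T_n$ of some legal hyperbolic element whose axis contains that edge (produced by Lemma~\ref{recurrent}), so the $f_n$-metrics on $S^\tau$ converge as well. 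Combining these, $f_n(x)$ converges to $f(x)$ for every $x$ in a compact fundamental domain, and by equivariance this is exactly equivariant Gromov--Hausdorff convergence $f_n \to f$.

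The main technical obstacle is the specialization subtlety in the uniqueness argument: a legal turn in $\tau$ may become illegal in $\tau_f$, so one cannot directly use $\tau$-tripods to locate $f(v)$. The bookkeeping above---tracking gates through specializations of exceptional classes and using the fourth bullet of Definition~\ref{de-specialization} to preserve three-gated structure---is what transfers admissibility from $\tau$ to $\tau_f$ at the required vertices and lets the triple-intersection argument proceed.
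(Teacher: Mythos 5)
Your proof is correct and follows essentially the same route as the paper's: locate $f(v)$ as the (intrinsic) triple intersection point of the axes in $T$ of a legal tripod of elements at $v$, and deduce continuity from the fact that axes and their intersections vary continuously in the equivariant Gromov--Hausdorff topology. You simply supply more detail than the paper does on two points it treats as immediate -- that legality survives the passage from $\tau$ to $\tau_f$ through specializations, and the translation-length bookkeeping for continuity.
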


\begin{proof}
To prove uniqueness, it is enough to show that the $f$-image of any vertex $v\in S^\tau$ is completely determined by the train-track structure. Let $(g_1,g_2,g_3)$ be a tripod of legal elements at $v$, and let $e_1,e_2,e_3$ be the three edges at $v$ taken by their axes. 
Then $f$ is isometric when restricted to $e_1\cup e_2\cup e_3$, and also when restricted to the axes of $g_1,g_2,g_3$. This implies that the intersection of the axes of $g_1,g_2,g_3$ in $T$ is reduced to a point, and $f$ must send $v$ to that point. Continuity also follows from the above argument.
\end{proof}

In order to define what it means for an equivalence class of $\calz$-averse trees to be carried by a train-track, we will need the following lemma.

\begin{lemma}\label{carry-collapse-1}
Let $T,T'\in\overline{cv}_N$, and let $\tau$ be a train-track.
\\ If there is an $F_N$-equivariant alignment-preserving bijection from $T$ to $T'$, then $T$ is carried by $\tau$ if and only if $T'$ is carried by $\tau$.
\end{lemma}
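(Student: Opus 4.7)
The plan is to transport a carrying map across the bijection $\phi \colon T \to T'$. Assume that $T$ is carried by $\tau$ with carrying map $f \colon S^\tau \to T$. I would define $f' \colon S^\tau \to T'$ to be the unique $F_N$-equivariant map which is linear on every edge of $S^\tau$ and agrees with $\phi \circ f$ on the vertex set $V(S^\tau)$. This is well-defined because, for every edge $e=[v,w]$ of $S^\tau$, the points $\phi(f(v))$ and $\phi(f(w))$ are joined by a unique segment in the $\mathbb{R}$-tree $T'$, along which we may parameterize $f'$ affinely using the affine structure on $e$; equivariance of $f'$ follows from equivariance of $\phi$ and of $f$. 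To check that $f'$ does not collapse any edge, note that adjacent vertices $v,w$ satisfy $f(v)\neq f(w)$ (since $f$ does not collapse edges), and injectivity of $\phi$ gives $\phi(f(v))\neq \phi(f(w))$.

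The main point is to verify that the induced train-track structure $\tau_{f'}$ on $S^\tau$ coincides with $\tau_f$. For the equivalence relation on vertices, this is immediate: for $v,w\in V(S^\tau)$, we have $f'(v)=f'(w)$ if and only if $\phi(f(v))=\phi(f(w))$, and by injectivity of $\phi$ this is equivalent to $f(v)=f(w)$. For the equivalence on directions at a class $X$, the key observation is that an $F_N$-equivariant alignment-preserving bijection between two $\mathbb{R}$-trees induces, at each point $p$ of $T$, a bijection between the set of directions at $p$ and the set of directions at $\phi(p)\in T'$: two germs $[p,y_1)$ and $[p,y_2)$ overlap in a nondegenerate initial segment in $T$ if and only if $[\phi(p),\phi(y_1))$ and $[\phi(p),\phi(y_2))$ overlap in a nondegenerate initial segment in $T'$, because $\phi$ sends segments to segments and $\phi([p,y_1]\cap[p,y_2])=[\phi(p),\phi(y_1)]\cap[\phi(p),\phi(y_2)]$ by bijectivity and alignment-preservation. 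Applying this at the point $f(X)\in T$ shows that two directions in $\mathcal{D}(X)$ are identified by $f$ if and only if they are identified by $f'=\phi\circ f$ (on vertices), so $\tau_{f'}=\tau_f$ as train-track structures on $S^\tau$.

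Since by hypothesis $\tau_f$ is obtained from $\tau$ by a finite sequence of specializations, the equal structure $\tau_{f'}$ is obtained from $\tau$ by the same sequence; hence $f'$ is a carrying map with respect to $\tau$, and $T'$ is carried by $\tau$. The reverse implication is symmetric, using that the inverse $\phi^{-1}\colon T'\to T$ is itself an $F_N$-equivariant alignment-preserving bijection. The step I expect to require the most care is the verification that $\phi$ induces a bijection on direction germs; this is where the alignment-preserving (not merely continuous) nature of $\phi$ is essential.
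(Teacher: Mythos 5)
Your proposal is correct and follows essentially the same route as the paper: both define $f'$ as the unique linear map agreeing with $\theta\circ f$ (your $\phi\circ f$) on vertices, and then use injectivity of the bijection to match vertex classes and alignment-preservation to match direction classes, so that $\tau_{f'}=\tau_f$. Your additional verification that an alignment-preserving bijection induces a bijection on germs of directions (and that its inverse is again alignment-preserving) just fills in details the paper leaves implicit.
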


\begin{proof}
Let $f:S^\tau\to T$ be a carrying map, and let $\theta:T\to T'$ be an $F_N$-equivariant alignment-preserving bijection. Let $f':S^\tau\to T'$ be the unique linear map that coincides with $\theta\circ f$ on the vertices of $S^\tau$. We claim that $f'$ is a carrying map. Indeed, since $\theta$ is a bijection, two vertices in $S^\tau$ have the same $f'$-image if and only if they have the same $f$-image. In addition, since $\theta$ preserves alignment, the germs of two directions in $S^\tau$ are identified by $f'$ if and only if they are identified by $f$. 
\end{proof}

We recall that $\calx_N\subseteq\overline{cv}_N$ denotes the subspace made of $\calz$-averse trees.

\begin{de}[\textbf{\emph{Carrying equivalence classes of $\calz$-averse trees}}] 
An equivalence class $\xi\in\calx_N/{\approx}$ is \emph{carried} by a train-track $\tau$ if some (equivalently, any) mixing representative of $\xi$ is carried by $\tau$. 
\end{de}

\subsection{Indices and stratifications} 

We now define stratifications of the Gromov boundaries of $I_N$, $FF_N$ and $FZ_N$ by means of an index function taking finitely many values. We will define both the index of a train-track and the index of a tree $T\in\overline{cv}_N$, and explain how the two are related, before defining the index of a boundary point (which is an equivalence class of trees). 

\subsubsection{Geometric index of a tree in $\overline{cv}_N$}

The following definition is reminiscent of Gaboriau--Levitt's index for trees in $\overline{cv}_N$ \cite{GL95}, with a slight difference in the constants we use. 

\begin{de}[\textbf{\emph{Geometric index of a tree $T\in\overline{cv}_N$}}]
Let $T\in\overline{cv}_N$ be a tree with trivial arc stabilizers. The \emph{geometric index} of $T$ is defined as $$i_{geom}(T):=\sum_{v\in V(T)/F_N}(\alpha_v+3~\mathrm{rank}(\text{Stab}(v))-3),$$ where $V(T)$ denotes the set of branch points in $T$, and $\alpha_v$ denotes the number of $F_N$-orbits of directions at $v$.
\end{de}

We warn the reader that the Gaboriau--Levitt index (denoted $i_{GL}$, below) is often also called the geometric index of a tree, and the definition used in this paper is somewhat nonstandard. The two are related in \eqref{eq1}, below (see \cite{MR2944977} for a discussion of various notions of index for trees and automorphisms). 

\begin{lemma}\label{geom-bound}
For all $T\in\overline{cv}_N$ with trivial arc stabilizers, we have $i_{geom}(T)\le 3N-3$. 
\\ If $T$ is arational, then $i_{geom}(T)\le 2N-2$.
\\ If $T$ is free and arational, then $i_{geom}(T)\le 2N-3$.
\end{lemma}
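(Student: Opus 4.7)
All three bounds will follow by comparing $i_{geom}(T)$ to the classical index of Gaboriau--Levitt \cite{GL95},
\[
i_{GL}(T) := \sum_{[v]}\bigl(\alpha_v + 2\,\mathrm{rank}(\stab(v)) - 2\bigr),
\]
summed over $F_N$-orbits of branch points; for every $T \in \overline{cv}_N$ with trivial arc stabilizers this satisfies $i_{GL}(T) \le 2N - 2$. Setting $r := \mathrm{rank}(\stab(v))$, the plan is to compare the $i_{geom}$-summand $\alpha_v + 3r - 3$ to the $i_{GL}$-summand $\alpha_v + 2r - 2$ at each orbit.

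For the general bound the key will be the pointwise inequality
\[
\alpha_v + 3r - 3 \;\le\; \tfrac{3}{2}\bigl(\alpha_v + 2r - 2\bigr),
\]
which simply rearranges to $\alpha_v \ge 0$. Summing over branch-point orbits and invoking Gaboriau--Levitt will yield $i_{geom}(T) \le \tfrac{3}{2}(2N-2) = 3N - 3$. (At every branch point the $i_{GL}$-summand is $\ge 1$, so no truncation issues arise.)

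For the arational bound I will invoke Reynolds's classification \cite{Rey}: any arational tree is either free, or dual to an arational measured lamination on a compact surface whose fundamental group is $F_N$. In either case every point stabilizer is trivial or cyclic, so $r \le 1$ at every branch point and hence $\alpha_v + 3r - 3 \le \alpha_v + 2r - 2$. Summing and applying Gaboriau--Levitt then gives $i_{geom}(T) \le i_{GL}(T) \le 2N - 2$.

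For the free arational bound we have $r = 0$ identically, so $\alpha_v + 3r - 3 = (\alpha_v + 2r - 2) - 1$ at each orbit, giving $i_{geom}(T) = i_{GL}(T) - \#\{F_N\text{-orbits of branch points of } T\}$. The remaining task is to show this count is at least $1$. A free arational tree lies in $\partial cv_N$ and so cannot be simplicial (a simplicial free very small $F_N$-tree is already in $cv_N$); therefore its action has dense orbits and necessarily has a branch point, supplying the required $-1$ and hence $i_{geom}(T) \le 2N - 3$. Establishing this non-triviality of the branch-point set is the one step that is not purely formal manipulation; everything else is bookkeeping on top of Gaboriau--Levitt and Reynolds.
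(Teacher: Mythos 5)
Your proposal is correct and follows essentially the same route as the paper: both reduce to Gaboriau--Levitt's bound $i_{GL}(T)\le 2N-2$ via the summand-by-summand comparison of $\alpha_v+3r-3$ with $\alpha_v+2r-2$, use Reynolds's classification to get $r\le 1$ in the arational case, and gain the extra $-1$ in the free case from the existence of at least one orbit of branch points. Your justification of that last existence claim (non-simplicial, hence dense orbits, hence a branch point) is slightly more explicit than the paper's, which simply asserts it.
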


\begin{proof}
Let $$i_{GL}(T):=\sum_{v\in V(T)/F_N}(\alpha_v+2~\text{rank}(\stab (v))-2).$$ We have $i_{geom}(T)\le \frac32i_{GL}(T)$ for all $T\in\overline{cv}_N$, and Gaboriau--Levitt proved in \cite[Theorem III.2]{GL95} that $i_{GL}(T)\le 2N-2$, so $i_{geom}(T)\le 3N-3$. In addition we have 
\begin{equation}\label{eq1}
i_{geom}(T)=i_{GL}(T)+\sum_{v\in V(T)/F_N}(\text{rank}(\text{Stab}(v))-1).
\end{equation}
\noindent Assume now that $T$ is arational. If the $F_N$-action on $T$ is free, since there is at least one orbit of branch points in $T$, we get from Equation~\eqref{eq1} that $i_{geom}(T)<i_{GL}(T)\le 2N-2$. Otherwise, by \cite{Rey}, the $F_N$-action on $T$ is dual to an arational measured lamination on a once-holed surface, so all point stabilizers in $T$ are either trivial or cyclic, and we get $i_{geom}(T)\le i_{GL}(T)\le 2N-2$. 
\end{proof}

\subsubsection{Index of a train-track and carrying index of a tree}

The \emph{height} $h(\cala)$ of a free factor system $\cala$ is defined as the maximal length $k$ of a proper chain of free factor systems $\emptyset=\cala_0\subsetneq\cala_1\subsetneq\dots\subsetneq\cala_k=\cala$. We recall that the free factor system $\cala(\tau)$ associated to a train-track $\tau$ is the free factor system consisting of the vertex stabilizers in the associated tree $S^\tau$.  

\begin{de}[\textbf{\emph{Index of a train-track}}]
The \emph{geometric index} of a train-track $\tau$ is defined as $$i_{geom}(\tau):=\sum_{i=1}^{k}(\alpha_i+3r_i-3),$$ where the sum is taken over a finite set $\{X_1,\dots, X_k\}$ of representatives of the $F_N$-orbits of equivalence classes in $V(S^\tau)$, $\alpha_i$ denotes the number of $\stab(X_i)$-orbits of gates at the vertices in $X_i$, and $r_i$ denotes the rank of  $\stab(X_i)$.
\\ The \emph{index} of $\tau$ is defined to be the pair $i(\tau):=(h(\cala(\tau)),i_{geom}(\tau))$. 
\end{de}

Indices of train-tracks will be ordered lexicographically. 

\begin{rk}
 If $\tau$ is admissible, then every equivalence class $X$ of vertices in $\tau$ has a nonnegative contribution to the geometric index of $\tau$. The contribution of a class $X$ to the geometric index of $\tau$ is zero if and only if $\stab(X)$ is trivial, and there are exactly three gates at $X$; if $\tau$ carries a tree with trivial arc stabilizers then this is precisely when $X$ is exceptional. 
\end{rk}

\begin{lemma}\label{geom-index}
Let $T\in\overline{cv}_N$ be a tree with trivial arc stabilizers, and let $\tau$ be an admissible train-track that carries $T$. Then $i_{geom}(\tau)\le i_{geom}(T)$. 
\end{lemma}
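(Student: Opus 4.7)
The plan is to organize all contributions to $i_{geom}(\tau)$ and $i_{geom}(T)$ by grouping them according to the carrying map $f : S^\tau \to T$. Since $\tau$-classes are refinements of $\tau_f$-classes and $\tau_f$-classes are exactly the vertex-sets of fibers of $f$, the map $f$ sends each $\tau$-class $X$ to a single point $f(X) \in T$, with $\stab(X) \le \stab(f(X))$. Moreover, by admissibility every $\tau$-class $X$ carries a tripod of legal axes, which descends under $f$ to a tripod at $f(X)$; hence $f(X)$ is a branch point of $T$. So it suffices to prove, for each $F_N$-orbit $[w]$ of branch points that lies in the image of $V(S^\tau)$, the local inequality
\[
\sum_{[X]\mapsto[w]} \bigl(\alpha_X+3r_X-3\bigr)\;\le\;\alpha_w+3r_w-3,
\]
where the sum is over $F_N$-orbits of $\tau$-classes mapping to $[w]$, equivalently, over $\stab(w)$-orbits of $\tau$-classes in the fiber $Y_w=f^{-1}(w)\cap V(S^\tau)$.

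I would attack the local inequality in two steps. \textbf{Step A (tight case $\tau=\tau_f$).} Here each $\tau$-class $Y$ is exactly a fiber $Y_w$, so $\stab(Y)=\stab(w)$ and $r_Y=r_w$; and by the very definition of $\tau_f$, two germs of directions at $Y$ are $\tau_f$-equivalent iff they have the same germ-image in $T$, giving a $\stab(w)$-equivariant \emph{injection} from gates at $Y$ to directions at $w$. Hence $\alpha_Y\le\alpha_w$, and the inequality holds termwise. \textbf{Step B (reduction to the tight case).} By Definition~\ref{de-carry}, $\tau_f$ is obtained from $\tau$ by a finite sequence of specializations. Proceeding by induction on the length of this sequence, it is enough to check that a single specialization $\tau\leadsto\tau^*$ at an exceptional class $[v_0]$ with merging partner $v_1$ satisfies $i_{geom}(\tau)\le i_{geom}(\tau^*)$.

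Before the specialization, $[v_0]$ contributes $0$ (three gates, trivial stabilizer) and $[v_1]$ contributes $\alpha_1+3r_1-3$; after, the single merged class $[v']$ replaces both with contribution $\alpha'+3r'-3$. The inclusion $[v_1]\subseteq[v']$ gives $\stab([v_1])\le\stab([v'])$; since $[v']/\stab([v'])$ is finite, this inclusion has finite index, so by the Schreier/Nielsen--Schreier formula for free groups one gets $r'=1+[\stab([v']):\stab([v_1])](r_1-1)\ge r_1$. On the gate side, the rules of Definition~\ref{de-specialization} do not alter the direction equivalence at vertices outside the orbit of $[v_0]$, so the $\stab([v_1])$-orbits of gates at $[v_1]$ map to $\stab([v'])$-orbits of gates at $[v']$; one must then account for the at most three new gates absorbed from $[v_0]$ and the possibly forced $F_N$-invariant identifications among $[v_1]$-gates produced by the new stabilizer elements. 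An orbit-counting argument using the index $[\stab([v']):\stab([v_1])]$ shows that these effects combine so as to yield $\alpha'+3r'\ge\alpha_1+3r_1+3$, which is exactly what is needed to absorb the $+3$ lost from the vanishing exceptional contribution.

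The main obstacle will be this last bookkeeping: quantifying how many of the three exceptional gates are genuinely new in $[v']$, versus how many are forced to coincide with $[v_1]$-gates via $F_N$-invariance, and weighing these against the rank jump coming from Schreier's formula. A clean way to handle this may be to pass through the Gaboriau--Levitt index $i_{GL}$, using the identity $i_{geom}(\tau)=i_{GL}(\tau)+\sum(r_X-1)$ which was noted for trees in Equation~\eqref{eq1} and which one can set up analogously for train-tracks; the coset-counting arithmetic behaves more transparently in $i_{GL}$, and then one converts back using the $\sum(r-1)$ correction.
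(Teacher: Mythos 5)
Your global architecture (compare $\tau$ to $\tau_f$ via the fibres of $f$, handle the tight case $\tau=\tau_f$ termwise, then reduce to it through the chain of specializations) is sound, and your Step A is exactly the comparison the paper makes: $\stab(Y)=\stab(w)$ and gate orbits inject into direction orbits. The gap is in Step B. First, the target inequality is miscounted: before the specialization $[v_0]$ contributes $0$ and $[v_1]$ contributes $\alpha_1+3r_1-3$, while afterwards the merged class contributes $\alpha'+3r'-3$, so what you need is $\alpha'+3r'\ge\alpha_1+3r_1$, not $\alpha'+3r'\ge\alpha_1+3r_1+3$; the latter is unattainable, since in fact $\alpha'=\alpha_1$ and $r'=r_1$. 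Second, and more seriously, the mechanism you propose --- a rank jump via the Schreier formula compensating for lost gate orbits --- rests on the premise that $\stab([v'])$ may be strictly larger than $\stab([v_1])$. It is not: since $v_0$ and $v_1$ lie in different $F_N$-orbits and $[v_0]$ has trivial stabilizer, any $h\in\stab([v'])$ must send the unique $\tau$-class inside $[v']$ lying in the orbit of $[v_1]$ to itself, so $\stab([v'])=\stab([v_1])$. Moreover, had the stabilizer genuinely grown by index $k$, the desired inequality could actually fail (e.g.\ $r_1=1$, $k=2$, $\alpha_1=4$ gives $\alpha'+3r'\le 2+3=5<7=\alpha_1+3r_1$), so no orbit-counting argument of the kind you sketch can exist; the bookkeeping you defer as ``the main obstacle'' is not merely technical but impossible as set up.

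The repair is short and is precisely the content of Lemma \ref{lem_spec}: by Definition \ref{de-specialization}, a specialization neither enlarges the stabilizer of $[v_1]$ nor creates new gates (the three gates at $[v_0]$ are absorbed into existing gates at vertices $\tau$-equivalent to $v_1$ by the last bullet of the definition), so each specialization preserves $i_{geom}$ exactly and Step B becomes trivial. The paper in fact avoids the induction altogether: exceptional classes contribute $0$ to $i_{geom}(\tau)$, distinct nonexceptional classes of $\tau$ are sent by $f$ to distinct branch points of $T$ with equal stabilizers, and gate orbits inject into direction orbits, which gives the inequality in one line.
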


\begin{proof}
Let $f:S^\tau\to T$ be the unique carrying map. Since $\tau$ is admissible, vertices in $S^\tau$ are mapped to branch points in $T$. As $f$ is a carrying map, distinct equivalence classes of nonexceptional vertices in $S^\tau$ are mapped to distinct branch points in $T$. If $X$ is an equivalence class in $S^\tau$ mapping to a branch point $v \in T$ then $\stab(X)=\stab(v)$. Furthermore, one checks that two gates based at $X$ in distinct $\stab(X)$-orbits are mapped under $f$ to directions at $v$ in distinct $\stab(v)$-orbits. Since exceptional vertices do not contribute to the geometric index, it follows that $i_{geom}(\tau) \leq i_{geom}(T)$.  
\end{proof}

Notice that the inequality from Lemma~\ref{geom-index} might be strict if some branch direction in the tree $T$ is not ``visible'' in the track. In the following definition, instead of directly counting branch points and branch directions in $T$, we will count the maximal number of such directions that are visible from a train-track.

\begin{de}[\textbf{\emph{Carrying index of a tree $T\in\overline{cv}_N$}}]
Let $T\in\overline{cv}_N$ be a tree with trivial arc stabilizers. We define the \emph{carrying index} of $T$, denoted by $i(T)$, as the maximal index of an admissible train-track $\tau$ that carries $T$. 
\\ An \emph{ideal carrier} for $T$ is an admissible train-track $\tau$ that carries $T$ such that $i(T)=i(\tau)$.
\end{de}

\begin{rk}\label{rk-index}
In view of Lemma \ref{carrier-exist}, if $\tau$ is an ideal carrier of a tree $T$, then $\cala(\tau)$ is a maximal free factor system that is elliptic in $T$.
\end{rk}

\begin{lemma}
The  carrying index of a tree $T\in\overline{cv}_N$ with trivial arc stabilizers can only take boundedly many values  (with a bound only depending on $N$).
\\ The  carrying index of an arational tree is comprised between $(0,0)$ and $(0,2N-2)$.
\\ The  carrying index of a free arational tree is comprised between $(0,0)$ and $(0,2N-3)$. 
\end{lemma}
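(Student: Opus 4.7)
The plan is to bound each of the two coordinates of $i(T)=(h(\cala(\tau)),i_{geom}(\tau))$ separately, where $\tau$ is an ideal carrier of $T$; such a $\tau$ exists by Lemma~\ref{carrier-exist}. By Remark~\ref{rk-index}, $\cala(\tau)$ is a maximal elliptic free factor system in $T$, so $h(\cala(\tau))$ is bounded above by the maximum height of any free factor system of $F_N$, which is a finite quantity depending only on $N$ (a chain of proper refinements of free factor systems of $F_N$ has length at most linear in $N$, by standard complexity considerations of the Grushko type). For the second coordinate, admissibility of $\tau$ ensures $i_{geom}(\tau)\ge 0$ via the remark following the definition of the geometric index of a train-track, while Lemmas~\ref{geom-index} and~\ref{geom-bound} together give $i_{geom}(\tau)\le i_{geom}(T)\le 3N-3$. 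This proves the first assertion.

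For the second and third assertions, I claim that $\cala(\tau)=\emptyset$ whenever $T$ is arational; then $h(\cala(\tau))=0$, and Lemma~\ref{geom-bound} improves the upper bound on $i_{geom}(\tau)$ to $2N-2$ (respectively to $2N-3$ in the free arational case), with the lower bound $i_{geom}(\tau)\ge 0$ again coming from admissibility. To prove the claim it is enough to show that no proper free factor of $F_N$ is elliptic in $T$. Here I plan to invoke Reynolds' classification of arational trees (\cite{Rey}, already used in the proof of Lemma~\ref{geom-bound}): such a tree is either free, or else dual to an arational measured lamination on a once-holed surface. In the first case no nontrivial subgroup is elliptic. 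In the second case every nontrivial point stabilizer is cyclic, conjugate to the subgroup generated by the boundary curve of the surface; under the identification $\pi_1(\Sigma)\cong F_N$ this curve is a product of commutators, hence lies in the commutator subgroup, is not primitive, and therefore generates no free factor. In either situation the maximal elliptic free factor system is empty.

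The only step with any genuine content is the verification that, in the surface-dual case, no elliptic cyclic subgroup generates a proper free factor; this is the ``main obstacle'' in the sense that it is the only place where one cannot simply quote earlier results, though it is still an elementary algebraic observation. Everything else is routine bookkeeping combining the definition of the carrying index with Lemmas~\ref{geom-index} and~\ref{geom-bound}.
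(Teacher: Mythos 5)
Your proof is correct and follows essentially the same route as the paper's: the first coordinate is bounded by the maximal height of a free factor system of $F_N$, and the second via Lemmas~\ref{geom-index} and~\ref{geom-bound}, with arationality forcing $h(\cala(\tau))=0$. The one thing worth flagging is that your ``main obstacle'' is not actually an obstacle: non-ellipticity of every proper free factor is part of the paper's \emph{definition} of an arational tree, so the detour through Reynolds' classification and the non-primitivity of the boundary curve, while correct, is superfluous.
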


\begin{proof}
The first assertion is a consequence of Proposition \ref{geom-bound}, together with the fact that there is a bound (only depending on $N$) on the height of a free factor system of $F_N$. The other assertions follow from Proposition~\ref{geom-bound} because no nontrivial free factor is elliptic in an arational tree, so the height $h(\cala(\tau))$ for a train-track $\tau$ that carries an arational tree is $0$.
\end{proof}

\subsubsection{Index of a boundary point and stratifications}\label{sec-st}

We now define the index of an equivalence class $\xi$ of $\calz$-averse trees. We recall that by definition a train-track $\tau$ carries $\xi$ if and only if $\tau$ carries the mixing representatives of $\xi$.

\begin{de}[\textbf{\emph{Index of a boundary point}}]
The \emph{index} $i(\xi)$ of an equivalence class $\xi\in\calx_N/{\approx}$ is defined as the maximal index of an admissible train-track that carries $\xi$ (equivalently $i(\xi)$ is the carrying index of the mixing representatives of $\xi$).
\\ Given an admissible train-track $\tau$, we define the \emph{cell} $P(\tau)$ as the subspace of $\calx_N/{\approx}$ made of all classes $\xi$ that are carried by $\tau$ and such that $i(\xi)=i(\tau)$.
\\ For all $i$, we define the \emph{stratum} $X_i\subseteq\mathcal{X}_N/{\approx}$ as the set of all points $\xi\in\calx_N/{\approx}$ such that $i(\xi)=i$.  
\end{de}

The Gromov boundary $\partial_\infty FZ_N$ can be written as the union of all strata $X_i$, where $i$ varies over the finite set of all possible indices for mixing $\calz$-averse trees. 

If a tree $T\in\overline{cv}_N$ is mixing and $\calz$-averse, and if no proper free factor is elliptic in $T$, then Lemma \ref{factor-action} says that $T$ is arational. Therefore, the boundary $\partial_\infty FF_N$ coincides with the subspace of $\calx_N/{\approx}$ which is the union of all strata $X_i$ with $i$ comprised between $(0,0)$ and $(0,2N-2)$. 

Finally, the boundary $\partial_\infty I_N$ can be written as the union $X'_0\cup\dots\cup X'_{2N-3}$, where $X'_i$ is the subspace of $X_{(0,i)}$ made of equivalence classes of free actions. 

In view of the topological facts recalled in the introduction, we are left showing that each cell $P(\tau)$ is closed in its stratum $X_{i(\tau)}$, and that $\dim(P(\tau))\le 0$. This will be the contents of Sections \ref{sec-closed} and \ref{sec-dim}, respectively. We give a complete overview of the proof of our main theorem in Section~\ref{sec-proof}.

\section{Closedness of $P(\tau)$ in its stratum}\label{sec-closed}

In general, the property of a tree being carried by a train-track is not a closed condition. However, cells determined by train-tracks are closed in their own strata. This is the goal of the present section. As each boundary we study is metrizable, throughout the paper we will use sequential arguments to work with the topology where this is appropriate. 

\begin{prop}\label{closed}
Let $\tau$ be an admissible train-track. Then all points in $\partial P(\tau) = \overline{P(\tau)} \setminus P(\tau) \subseteq\calx_N/{\approx}$ have index strictly greater than $i(\tau)$. 
\\ In particular $P(\tau)$ is closed in $X_{i(\tau)}$.
\end{prop}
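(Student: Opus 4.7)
The plan is to take $\xi\in\partial P(\tau)$ with an approximating sequence $\xi_n\in P(\tau)$, extract a limit map from $S^\tau$ to a mixing representative of $\xi$, and read off from this limit an admissible train-track $\tau'$ carrying $\xi$ with $i(\tau')>i(\tau)$. The only obstruction to this construction producing a strictly larger index will be the case when the limit map is itself a carrying map for $\tau$, in which case $\xi$ is already carried by $\tau$ and the defining condition of $P(\tau)$ forces $i(\xi)>i(\tau)$ directly. Combined with $i(\xi)\ge i(\tau')$, this gives $i(\xi)>i(\tau)$ for every $\xi\in\partial P(\tau)$, so in particular $\partial P(\tau)\cap X_{i(\tau)}=\emptyset$ and $P(\tau)$ is closed in its stratum.

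First I would use Theorem~\ref{bdy-fz} together with compactness of the space of normalized trees in $\overline{cv}_N$ to extract, after passing to a subsequence and rescaling, mixing representatives $T_n$ of $\xi_n$ converging to some $T\in\calx_N$ representing $\xi$; then I replace $T$ by the mixing representative of its $\approx$-class, transporting carrying information along the alignment-preserving map via Lemma~\ref{carry-collapse-1}, so that I may assume $T$ is mixing. Lemma~\ref{carrier-unique} provides unique carrying maps $f_n\colon S^\tau\to T_n$, and its continuity clause gives an $F_N$-equivariant linear limit map $f\colon S^\tau\to T$. Let $E_0$ be the $F_N$-invariant union of edges of $S^\tau$ that $f$ collapses to a point, let $S'=S^\tau/E_0$ be the equivariant collapse, and let $f'\colon S'\to T$ be the induced linear map, which by construction collapses no edge. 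Equipping $S'$ with the induced train-track structure $\tau'=\tau_{f'}$ from the discussion preceding Definition~\ref{de-specialization}, the map $f'$ is automatically a carrying map for $\tau'$, so $T$, and hence $\xi$, is carried by $\tau'$. Admissibility of $\tau'$ would be checked by chasing the tripods of legal elements at each vertex class of $\tau$: since admissibility is an open condition under equivariant Gromov--Hausdorff convergence of carrying maps, each such tripod remains legal in $\tau_{f_n}$ for large $n$, and its image descends to a legal tripod in $\tau'$ after passage to the quotient.

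The hard part will be the index comparison. If $E_0\neq\emptyset$, then each collapsed orbit of edges has trivial stabilizer in $S^\tau$, and collapsing it produces a vertex stabilizer in $S'$ that is a free product of the stabilizers of the collapsed component (together with free rank coming from any loops created); consequently $\cala(\tau')$ strictly dominates $\cala(\tau)$ in the partial order on free factor systems, so $h(\cala(\tau'))>h(\cala(\tau))$ and the lexicographic order gives $i(\tau')>i(\tau)$ at once. If $E_0=\emptyset$, then $S'=S^\tau$ and $\cala(\tau')=\cala(\tau)$, and the comparison becomes a purely combinatorial bookkeeping between $i_{geom}(\tau)$ and $i_{geom}(\tau')$: any discrepancy between $\tau$ and $\tau'$ must take the form of a merging of two $F_N$-orbits of $\sim_V^\tau$-classes, or an identification of two gates at a single class. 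The summands $\alpha_i+3r_i-3$ defining $i_{geom}$ are calibrated so that each such elementary move either strictly increases the total, or corresponds precisely to a specialization at an exceptional class in the sense of Definition~\ref{de-specialization}, where the trivial stabilizer, three-gate condition, and direction identification prescribed there exactly balance the accounting. If every discrepancy is of the specialization type, then by Definition~\ref{de-carry} the tree $T$ is carried by $\tau$, and since $\xi\in\overline{P(\tau)}\setminus P(\tau)$ while $\xi$ is carried by $\tau$, this already forces $i(\xi)>i(\tau)$. Otherwise $i(\tau')>i(\tau)$ and the admissible carrier $\tau'$ yields $i(\xi)\ge i(\tau')>i(\tau)$, completing the argument.
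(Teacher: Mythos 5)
Your overall architecture matches the paper's: pass to a convergent sequence of mixing representatives, take the limit of the carrying maps, collapse the degenerate edges, read off an induced track $\tau'$, and show that either the index jumps or $\tau'$ is a specialization of $\tau$ (so that $\tau$ carries $T$ and the defining condition of $P(\tau)$ finishes the argument). However, two steps as written would fail.

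First, the reduction to a mixing limit tree is invalid. Lemma~\ref{carry-collapse-1} transports carrying only along alignment-preserving \emph{bijections}, whereas the map from a non-mixing $\calz$-averse limit $T$ to the mixing representative $\overline{T}$ of its class is in general a genuine collapse (Lemma~\ref{lem_mix}); the limit of the maps $f_n$ lands in $T$, not in $\overline{T}$, and you cannot simply ``replace $T$ by $\overline{T}$.'' The paper deals with this by a separate argument (Lemma~\ref{lem_stab_up} via Corollary~\ref{cor-mixing}): if $T$ fails to be mixing, or if $\cala(\tau)$ fails to be a maximal elliptic free factor system in $T$, then $\cala(\tau)$ is not maximally elliptic in $\overline{T}$ and $i(\xi)>i(\tau)$ outright. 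You never isolate this case, yet the maximality of $\cala(\tau)$ in $T$ is used essentially throughout the index comparison (e.g.\ to classify the stabilizers of merged vertex classes via Proposition~\ref{description-stab} and to show hyperbolic elements of $S^\tau$ stay hyperbolic in $S'$). Second, your dichotomy on $E_0$ rests on a false claim: collapsing a nonempty invariant set of edges need not enlarge the elliptic free factor system. Collapsing a finite subtree all of whose vertices and edges have trivial stabilizer produces a vertex with trivial stabilizer, so $\cala(\tau')=\cala(\tau)$ and $h(\cala(\tau'))=h(\cala(\tau))$ is perfectly possible with $E_0\neq\emptyset$. The collapse case must therefore be absorbed into the \emph{geometric} index comparison, and that comparison is the real content of the paper's Proposition~\ref{prop_index_up}: one must bound the number of gates lost when several vertex classes merge, via the two claims about which directions land inside the spanned subtrees $Y_n$, the quotient graphs $G_n$, and the Euler-characteristic counts of directions at marked points, split into three cases according to the behaviour of $\stab(x)$. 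Your phrase that the summands $\alpha_i+3r_i-3$ are ``calibrated so that'' each elementary move either increases the total or is a specialization asserts precisely the conclusion of this argument without supplying it.
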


Our proof of Proposition~\ref{closed} is based on Lemma~\ref{lem_stab_up} and Proposition~\ref{prop_index_up} below.

\begin{lemma}\label{lem_stab_up}
Let $\tau$ be an admissible train-track, and let $(T_n)_{n\in\mathbb{N}}$ be a sequence of mixing $\calz$-averse trees carried by $\tau$. Assume that the trees $T_n$ converge to a $\calz$-averse tree $T$, and denote by $\xi$ the $\approx$-class of $T$. 
\\ Then either $T$ is mixing and $\cala(\tau)$ is a maximal free factor system that is elliptic in $T$, or else $i(\xi)>i(\tau)$.
\end{lemma}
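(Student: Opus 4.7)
The plan is to compare $\cala(\tau)$ to a maximal elliptic free factor system in a mixing representative $\overline{T}$ of $\xi$. The key preliminary is that $\cala(\tau)$ remains elliptic in $T$, hence also in $\overline{T}$.

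Since $\tau$ carries each $T_n$, each factor $A\in\cala(\tau)$ is elliptic in every $T_n$ (the carrying map sends the unique $A$-fixed point in $S^\tau$ to an $A$-fixed point in $T_n$). For every $g\in A$, continuity of translation length in $\overline{cv}_N$ gives $\|g\|_T = \lim_n \|g\|_{T_n} = 0$, so $g$ is elliptic in $T$. The standard fact that the set of trees in $\overline{cv}_N$ in which a fixed finitely generated subgroup is elliptic is closed then yields that $A$ itself is elliptic in $T$, so $\cala(\tau)$ is an elliptic free factor system in $T$. Applying Proposition~\ref{mixing} to the mixing tree $\overline{T}\approx T$, every element of $F_N$ elliptic in $T$ is also elliptic in $\overline{T}$, so $\cala(\tau)$ is elliptic in $\overline{T}$ as well.

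Now choose a maximal free factor system $\cala'$ elliptic in $\overline{T}$ that contains $\cala(\tau)$ (such exists because the height function on free factor systems of $F_N$ is bounded). If $\cala'\supsetneq \cala(\tau)$, then $h(\cala')>h(\cala(\tau))$; since $\overline{T}$ is $\calz$-averse, hence has trivial arc stabilizers, Lemma~\ref{carrier-exist} produces an admissible train-track $\tau'$ with $\cala(\tau')=\cala'$ that carries $\overline{T}$. Then lexicographically $i(\tau')>i(\tau)$, and therefore $i(\xi)=i(\overline{T})\ge i(\tau')>i(\tau)$, which is the second alternative of the lemma.

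If instead $\cala'=\cala(\tau)$, I would argue that $T$ must be mixing, which gives the first alternative. Pick any maximal free factor system $\cala_T$ elliptic in $T$ containing $\cala(\tau)$. Since $\overline{T}$ is mixing and $\overline{T}\approx T$, Proposition~\ref{mixing} keeps $\cala_T$ elliptic in $\overline{T}$; but $\cala_T\supseteq \cala(\tau)=\cala'$, so maximality of $\cala'$ in $\overline{T}$ forces $\cala_T=\cala'$, i.e.\ $\cala_T$ is itself a maximal elliptic free factor system in $\overline{T}$. Corollary~\ref{cor-mixing} then yields that $T$ is mixing; since any two mixing representatives of the same $\approx$-class admit $F_N$-equivariant alignment-preserving bijections between each other, $T$ and $\overline{T}$ share the same point stabilizers, and $\cala(\tau)=\cala'$ is a maximal elliptic free factor system in $T$. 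The one genuine subtlety is the limit step establishing ellipticity of $\cala(\tau)$ in $T$; everything else is a routine bookkeeping argument with Proposition~\ref{mixing}, Corollary~\ref{cor-mixing} and Lemma~\ref{carrier-exist}.
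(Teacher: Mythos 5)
Your proposal is correct and follows essentially the same route as the paper: pass ellipticity of $\cala(\tau)$ to the limit $T$ and then to a mixing representative $\overline{T}$, use Lemma~\ref{carrier-exist} (the content of Remark~\ref{rk-index}) to get an index jump when $\cala(\tau)$ fails to be maximally elliptic in $\overline{T}$, and use Corollary~\ref{cor-mixing} to handle mixingness of $T$. The only difference is cosmetic — you split cases according to maximality in $\overline{T}$ rather than according to properties of $T$ — and your extra detail on the limit step is sound.
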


\begin{proof}
The free factor system $\cala(\tau)$ is elliptic in all trees $T_n$, and therefore it is also elliptic in $T$. If $\cala(\tau)$ is not a maximal free factor system elliptic in $T$, then as $T$ collapses to any mixing representative $\overline{T} \in \xi$ the free factor system $\cala(\tau)$ is not maximally elliptic in $\overline{T}$. Remark~\ref{rk-index} then implies that $i(\xi)>i(\tau)$. If $T$ is not mixing (but possibly $\cala(\tau)$ is maximally elliptic in $T$), then Corollary~\ref{cor-mixing} implies that again $\cala(\tau)$ is not maximally elliptic in any mixing representative of $\xi$. Hence $i(\xi)>i(\tau)$ in this case, also. 
\end{proof}

To complete the proof of Proposition~\ref{closed}, we are thus left understanding the case where the limiting tree $T$ is mixing, and $\cala(\tau)$ is a maximal elliptic free factor system in $T$: this will be done in Proposition~\ref{prop_index_up} below. The idea is that the carrying maps $f_n:S^\tau\to T_n$ will always converge to an $F_N$-equivariant map $f:S^\tau\to T$; in general the limiting map $f$ can fail to be a carrying map (it may even collapse some edges to points), and in this case we will prove in Proposition~\ref{prop_index_up} that there is a jump in index. We start by proving the existence of the limiting map $f$.

\begin{lemma}
Let $\tau$ be an admissible train-track, and let $(T_n)_{n\in\mathbb{N}}$ be a sequence of trees in $\overline{cv}_N$ with trivial arc stabilizers, converging to a tree $T\in\overline{cv}_N$. Assume that all trees $T_n$ are carried by $\tau$, and for all $n\in\mathbb{N}$, let $f_n:S^\tau\to T_n$ be the carrying map. 
\\ Then the maps $f_n$ converge (in the equivariant Gromov--Hausdorff topology) to a map $f:S^\tau\to T$.
\end{lemma}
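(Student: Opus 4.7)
The plan is to define the candidate limit map $f$ on each vertex of $S^\tau$ by exploiting the tripod characterization of carrying maps used in the proof of Lemma~\ref{carrier-unique}, then extend linearly to edges, and finally verify convergence in the equivariant Gromov--Hausdorff topology by combining the pointwise convergence at vertices with linearity on edges.

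First, fix a vertex $v \in S^\tau$ and choose a tripod of legal elements $(g_1, g_2, g_3)$ at $v$, provided by admissibility of $\tau$. For each $n$, since the carrying map $f_n$ is isometric on every legal segment of $S^\tau$, each $g_i$ acts hyperbolically on $T_n$ with translation length equal to that on $S^\tau$, and $f_n(v)$ is the unique common point of the three axes of $g_1,g_2,g_3$ in $T_n$. As $T_n \to T$, the translation lengths of the $g_i$ on $T_n$ converge to their translation lengths on $T$. If all three $g_i$ remain hyperbolic in $T$, their axes meet at a single point, which I declare to be $f(v)$, and the equivariant Gromov--Hausdorff approximations force $f_n(v) \to f(v)$. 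If some $g_i$ becomes elliptic in $T$, the same approximations show that $f_n(v)$ still accumulates on a point in the intersection of the remaining axes with the relevant fixed-point subtrees; triviality of tripod stabilizers in very small trees guarantees that this intersection is a single point, and I take this to be $f(v)$.

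Next, I would check that $f(v)$ is independent of the choice of tripod. Given any two tripods of legal axes at $v$, Lemma~\ref{recurrent} lets one concatenate corresponding legal segments into longer legal subtrees on which every $f_n$ is isometric; passing to the limit, the two candidate images of $v$ in $T$ must coincide. Equivariance of $f$ on vertices follows from equivariance of each $f_n$, and $f$ is then extended linearly to each edge. Because $f_n$ is linear on edges and converges to $f$ at the endpoints, $f_n$ converges to $f$ uniformly on every edge of a fundamental domain for the $F_N$-action on $S^\tau$.

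Convergence in the equivariant Gromov--Hausdorff sense then follows from this pointwise and edgewise convergence: given a finite subtree $K \subset S^\tau$, a finite $P \subset F_N$, and $\varepsilon > 0$, only finitely many edges of $S^\tau$ meet $K \cup P \cdot K$, and the uniform convergence of $f_n$ to $f$ on each such edge can be combined with the $(\varepsilon, P)$-approximations between $T_n$ and $T$ to produce approximations intertwining $f_n$ and $f$ on $K$ for $n$ large. The main obstacle I anticipate is the degenerate case in which some legal element becomes elliptic in $T$: one must verify that the limit point $f(v)$ is genuinely unique rather than merely the limit of a chosen subsequence, which ultimately reduces to triviality of tripod stabilizers in very small trees and the fact that fixed-point sets of independent elliptic elements in $\overline{cv}_N$ meet in at most a single point.
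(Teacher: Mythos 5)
Your overall strategy coincides with the paper's: locate $f_n(v)$ using legal elements through $v$, pass to the limit to define $f(v)$, extend linearly over edges, and then do the standard equivariant Gromov--Hausdorff bookkeeping. The difference is in the device used to pin down $f(v)$, and this is where your argument has a genuine gap. You define $f(v)$ as ``the'' common point of the three limiting axes of a tripod of legal elements, asserting uniqueness via triviality of tripod stabilizers. But the degenerate limits are exactly the cases this lemma must cover (immediately afterwards, Proposition~\ref{prop_index_up} collapses the edges of $S^\tau$ whose $f$-image is a point, so one expects edge lengths $|f_n(e)|\to 0$ in general). When the tripod of edges at $v$ collapses in the limit, the three axes $A^T_{g_1},A^T_{g_2},A^T_{g_3}$ can a priori share a nondegenerate segment, and nothing you invoke rules this out: a nondegenerate triple overlap of axes is not a tripod stabilizer. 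Likewise, in your elliptic case, if $g_1$ becomes elliptic and its fixed set meets $A^T_{g_2}\cap A^T_{g_3}$ in a nondegenerate arc $I$, the only consequence is that $\langle g_1\rangle$ lies in the stabilizer of $I$ --- an \emph{arc} stabilizer, which is allowed to be nontrivial (cyclic) in a very small tree. So the intersection you take $f(v)$ to be need not be a single point, the recipe does not define $f(v)$, and the convergence $f_n(v)\to f(v)$ is not established in precisely the situations where the lemma has content.

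The paper's proof avoids this by a more robust normalization: it chooses two legal elements $g,h$ whose axes in $S^\tau$ overlap in a compact nondegenerate segment having $v$ as an endpoint, with both elements translating along the overlap \emph{away} from $v$. Then $f_n(v)$ is the initial point of $A^{T_n}_g\cap A^{T_n}_h$ with respect to these translation directions, and this initial point (together with distances between such points) is a continuous function of the tree in the equivariant Gromov--Hausdorff topology, by Paulin's results --- even when the overlap degenerates to a point or an element becomes elliptic. To repair your argument you would either need to adopt this two-element, initial-point device, or give an actual proof that the triple intersection of your three axes (resp.\ of fixed sets and axes) remains a single point in the limit; as written, the appeal to tripod stabilizers does not do this.
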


\begin{proof}
Let $v$ be a vertex in $S^\tau$. Since $\tau$ is admissible, there exist three inequivalent edges $e_1,e_2,e_3$ that form a legal tripod at $v$, and two elements $g,h\in F_N$ that both act hyperbolically in $S^\tau$, whose axes are legal, and such that the axis of $g$ (resp. $h$) crosses the turn $(e_1,e_3)$ (resp. $(e_2,e_3)$). Then the axes of $g$ and $h$ in $S^\tau$ intersect in a compact non-degenerate segment with initial point $v$, and up to replacing $g$ and $h$ by their inverses, we can assume that $g$ and $h$ both translate along this segment in the direction going out of $v$. Since all maps $f_n$ are carrying maps, the elements $g$ and $h$ are hyperbolic in all trees $T_n$, and their axes in $T_n$ intersect in a compact nondegenerate segment, on which they translate in the same direction. In the limiting tree $T$ the intersection $A^T_g \cap A^T_h$ of the axes (or fixed sets) of $g$ and $h$ is still a compact segment, and if it is non-degenerate the elements $g$ and $h$ are hyperbolic in $T$ and still translate in the same direction along the intersection. We then define $f(v)$ to be the initial point of $A^T_g \cap A^T_h$. We repeat this process over each orbit of vertices to obtain an $F_N$-equivariant map from the vertices of $S^\tau$ to $T$, and extend the map linearly over edges. Distances between intersections of axes (as well as their initial points) are determined by the Gromov--Hausdorff topology \cite{Paulin}, so it follows that for any two vertices $v$ and $v'$, the distance $d_{T_n}(f_n(v),f_n(v'))$ converges to $d_{T}(f(v),f(v'))$ as $n$ goes to infinity. This implies that the sequence of maps $(f_n)$ converges to $f$. 
\end{proof}

\begin{prop}\label{prop_index_up}
Let $\tau$ be an admissible train-track, and let $(T_n)_{n\in\mathbb{N}}$ be a sequence of mixing $\calz$-averse trees carried by $\tau$. Assume that $(T_n)$ converges to a mixing $\calz$-averse tree $T$, and that $\cala(\tau)$ is a maximal free factor system elliptic in $T$. For all $n\in\mathbb{N}$, let $f_n:S^\tau\to T_n$ be the carrying map, and let $f:S^\tau\to T$ be the limit of the maps $f_n$. 
\\ Let $S'$ be the tree obtained from $S^\tau$ by collapsing all edges whose $f$-image is reduced to a point, let $f':S'\to T$ be the induced map, and let $\tau':=\tau_{f'}$ be the train-track on $S'$ induced by $f'$. 
\\ Then $\tau'$ is admissible. Either $i_{geom}(\tau')>i_{geom}(\tau)$, or else $S'=S^\tau$ and $\tau'$ is obtained from $\tau$ by a finite (possibly trivial) sequence of specializations (hence $\tau$ carries $T$).
\end{prop}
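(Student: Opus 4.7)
The plan is to analyze the ways in which the limit map $f$ can fail to be a carrying map for $\tau$, and to show that each such failure either assembles into a specialization (alternative (b) of the statement) or forces a strict increase in $i_{geom}$. There are three elementary failures to consider: some edges of $S^{\tau}$ are collapsed to points by $f$; two vertices of $S^{\tau}$ inequivalent under $\sim_V^{\tau}$ become identified by $f$; or germs of two directions lying in distinct gates of $\tau$ become identified by $f$ at a common vertex class. The hypothesis that $\cala(\tau)$ is a maximal free factor system elliptic in $T$, together with Lemma~\ref{factor-action} and Proposition~\ref{description-stab}, gives strong control on point stabilizers in $T$ and hence on the stabilizers of vertex classes of $\tau'$: no new ranks can appear.

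First I would verify admissibility of $\tau'$. Each $\tau_{f_n}$ is a specialization of $\tau$, and a specialization of an admissible track is again admissible; since there are only finitely many specializations, after extracting a subsequence $\tau_{f_n}$ is constant, equal to some admissible $\sigma$. For each vertex class $X'$ of $\tau'$ obtained from a $\sigma$-class $X$, the tripod of legal elements $(g_1,g_2,g_3)$ witnessing admissibility of $\sigma$ at $X$ yields, in the limit, three direction germs at $f'(X')$ whose axes are the Gromov--Hausdorff limits of the corresponding axes in $T_n$; these remain distinct because the three elements $g_i$ are not contained in any point stabilizer of $T$ (by the control on stabilizers from the previous paragraph), so they are hyperbolic in $T$ and their axes form a genuine tripod. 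When $f$ collapses some edges, the tripods at vertices merged into a common class of $\tau'$ combine, and extracting three gates with a legal tripod is a routine matter using Lemma~\ref{recurrent}.

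For the index dichotomy, I would decompose the passage $\tau \to \tau'$ into elementary moves of the three types above and compute the change $\Delta i_{geom}$ in $\sum(\alpha_i + 3 r_i - 3)$ per move. Collapsing an $F_N$-orbit of edges between classes in distinct orbits reduces the number of orbits by one (contributing $+3$ from the $-3$ term) while the gate count drops by at most $2$, so $\Delta \geq +1$. Identifying two inequivalent vertex classes behaves similarly. Merging two gates at one class strictly decreases $\alpha$ by one, unless it is exactly the direction redistribution prescribed by Definition~\ref{de-specialization} at an exceptional class, whose zero contribution is absorbed into the non-exceptional target class without changing the sum. The maximality of $\cala(\tau)$ in $T$ combined with Proposition~\ref{description-stab} forbids stabilizer growth outside $\cala(\tau)$, so ranks cannot contribute a cancellation.

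The main obstacle is the equivariant bookkeeping when several elementary moves occur simultaneously: vertex classes in distinct $F_N$-orbits can fuse, direction identifications can happen across collapsed edges, and the counts of $\alpha_i$ depend on new stabilizers and their actions on directions. The computation must exclude any combination producing exactly $\Delta i_{geom} = 0$ unless it genuinely assembles into an iterated specialization of $\tau$. The rigidity in Definition~\ref{de-specialization}---precisely one new identification $v_0 \sim v_1$ per exceptional class, every direction at $v_0$ reappearing at $v_1$, and the underlying tree unchanged---together with the admissibility of $\tau'$ from the previous step, is exactly what is needed to exclude these fake zero-change configurations and force the dichotomy of the statement.
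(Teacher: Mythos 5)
Your proposal correctly identifies the two halves of the argument (admissibility, then the index dichotomy), but the index computation contains a genuine gap, and one of your structural premises is false. You assert that collapsing an $F_N$-orbit of edges joining classes in distinct orbits gains $+3$ from the $-3$ term ``while the gate count drops by at most $2$.'' That bound is exactly the hard combinatorial content of the proposition, and it is not true move-by-move: when several vertex classes of $\tau$ are merged into one class of $\tau'$, gates based at \emph{different} constituent classes (not just the two germs along a collapsed edge) can become identified in $\tau_{f'}$, so \emph{a priori} arbitrarily many gates can be lost per merge. The paper's proof does not decompose $\tau\to\tau'$ into elementary moves at all; instead it does a single global count per point $x=f'(v)$ of $T$: it shows (via two Claims) that every lost gate corresponds to a direction of $T_n$ lying inside the subtree $Y_n$ spanned by $f_n(X)$, and then bounds the number of such directions by an Euler characteristic count on the quotient graph $G_n=Y_n/\stab(x)$, whose rank is $0$ or $1$ precisely because $\cala(\tau)$ is maximally elliptic in $T$. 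Your proposal names this as ``the main obstacle'' but does not supply the mechanism that resolves it, so the dichotomy is not established.

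Secondly, your claim that ``no new ranks can appear'' is incorrect: the stabilizer of a merged class can be a cyclic group generated by (a power of) a boundary-curve element that stabilizes no vertex of $S^\tau$, and the paper devotes two of its three cases to exactly this situation ($r'=1$, $r=0$), where one must also allow for the possibility that the fixed point of $\stab(x)$ in $T_n$ is not in the image of $f_n$, which costs one extra direction in the Euler characteristic bound. (The rank increase enters the count with a $+3(r'-r)$ and so helps rather than threatens the inequality, but your argument cannot ignore these cases.) The admissibility half of your proposal is closer to viable, though you should make explicit why the $\pi$-preimage of a vertex of $S'$ is a bounded subtree --- this uses the fact that any element elliptic in $T$ but not conjugate into $\cala(\tau)$ is a boundary-curve element whose axis in $S^\tau$ crosses every edge orbit, hence survives the collapse --- before invoking Lemma~\ref{recurrent} to build the legal tripods.
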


Before proving Proposition~\ref{prop_index_up}, we first complete the proof of Proposition~\ref{closed} from the above facts.

\begin{proof}[Proof of Proposition \ref{closed}]
Let $\xi\in\partial P(\tau)$. If $\xi$ is carried by $\tau$, then $i(\xi)\ge i(\tau)$, and this inequality is strict because otherwise $\tau$ would be an ideal carrier of $\xi$, contradicting $\xi\in\partial P(\tau)$. 

We now assume that $\xi$ is not carried by $\tau$. Let $(\xi_n)_{n\in\mathbb{N}}\in P(\tau)^{\mathbb{N}}$ be a sequence converging to $\xi$, and for all $n\in\mathbb{N}$, let $T_n$ be a mixing representative of the equivalence class $\xi_n$. Since $\overline{cv}_N$ is projectively compact, up to a subsequence we can assume that $(T_n)_{n\in\mathbb{N}}$ converges to a tree $T$. Since the boundary map $\partial\pi:\calx_N\to\partial_\infty FZ_N$ is closed, the tree $T$ is $\calz$-averse, and in the $\approx$-class $\xi$. Using Lemma~\ref{lem_stab_up}, the proof reduces to the case where $T$ is mixing, and $\cala(\tau)$ is a maximal free factor system elliptic in $T$. Since $\xi$ is not carried by $\tau$, Proposition~\ref{prop_index_up} then shows that $T$ is carried by a train-track $\tau'$ satisfying $i_{geom}(\tau')>i_{geom}(\tau)$. This implies in turn that $i(\xi)>i(\tau)$.  
\end{proof}

The rest of the section is devoted to the proof of Proposition~\ref{prop_index_up}.

\begin{proof}[Proof of Proposition~\ref{prop_index_up}]
~\\\textbf{1. The track $\tau'$ is admissible.} 
Before proving that $\tau'$ is admissible, we first observe that all hyperbolic elements in $S^\tau$ are still hyperbolic in $S'$. Indeed, in view of Proposition~\ref{description-stab} and of our assumption that $\cala(\tau)$ is a maximal elliptic free factor system in $T$, if $g\in F_N\setminus\{e\}$ is elliptic in $T$ but is not contained in a free factor from $\cala(\tau)$, then the conjugacy class of $g$ is given by (some power of) a boundary curve. Hence $g$ is not contained in any proper free factor of $F_N$ relative to $\cala(\tau)$. Therefore the axis of $g$ in $S^\tau$ crosses all orbits of edges, and so cannot be collapsed to a point by the collapse map $\pi:S^\tau\to S'$.

We now prove that $\tau'$ is admissible. 
Let $v\in V(S')$, and let $X\subseteq S^\tau$ be the $\pi$-preimage of $v$. We first observe that $X$ is a bounded subtree of $S^\tau$: indeed, otherwise, we would find two oriented edges $e,e'$ in $X$ in the same $F_N$-orbit (say $e'=ge$) and pointing in the same direction; this would imply that $g$ is hyperbolic in $S^\tau$ but not in $S'$, a contradiction. Let $Y\subseteq X$ be a maximal legal subtree of $X$. Using the fact that $\tau$ is admissible, we can find three pairwise inequivalent edges $e_1,e_2,e_3$ lying outside of $X$ based at extremal vertices $v_1,v_2,v_3$ of $Y$ (these vertices are not necessarily distinct), such that the subtree $Y\cup e_1\cup e_2\cup e_3$ is legal in $\tau$. Furthermore, no edge $e_i$ is collapsed to a point by $\pi$. Since $\tau$ is admissible, Lemma~\ref{recurrent} gives three elements $g_1,g_2,g_3$, which act hyperbolically in $S^\tau$, whose axes are legal, and such that the axis of $g_i$ in $S^\tau$ crosses the segment $e_i\cup [v_i,v_{i+1}]\cup e_{i+1}$ (mod $3$). For all $n\in\mathbb{N}$, the map $f_n$ preserves alignment when restricted to each of these axes, so in the limit $f$ also preserves alignment when restricted to these axes. This implies that $g_1,g_2$ and $g_3$ are legal in $S'$. In addition, their axes form a legal tripod at $v$, so $\tau'$ is admissible.
\\
\\
\textbf{2. Controlling the index of $\tau'$.} We now prove that $i_{geom}(\tau')>i_{geom}(\tau)$ unless $\tau$ carries $T$. Given a vertex $v\in V(S')$, we will first establish that the contribution of the equivalence class $[v]$ to the index of $\tau'$ is no less than the sum of the contributions of its $\pi$-preimages in $V(S^\tau)$ to the index of $S^\tau$. 

Let $x:=f'(v)$. Let $X$ be the set of vertices in $S^\tau$ that are mapped to $x$ under $f$. As equivalent vertices in $\tau$ are mapped to the same point in $T$ under $f$, the set $X$ is a union of equivalence classes of vertices in $\tau$. As $\stab(x)=\stab(X)$, if two vertices in $X$ are in the same $F_N$-orbit, then they are in the same $\stab(x)$-orbit. Hence we may pick a finite set $E_1, \ldots, E_k$ of representatives of the $\stab(x)$-orbits of equivalence classes of vertices in $X$. Suppose that the images of these equivalence classes correspond to $l$ orbits of points in $T_n$  (where $l$ can be chosen to be independent of $n$ by passing to an appropriate subsequence). After reordering (and possibly passing to a further subsequence) we may assume $E_1, \ldots, E_l$  are mapped to distinct $\stab(x)$-orbits of points in $T_n$ for all $n\in\mathbb{N}$, and $E_{l+1},\ldots, E_k$ are exceptional classes in $\tau$ (notice that some of the classes $E_i$ with $i\in\{1,\dots,l\}$ might be exceptional as well). 

Suppose that each equivalence class $E_i$ has $\alpha_i$ gates in $\tau$ and has a stabilizer of rank $r_i$. As exceptional classes do not contribute to the geometric index, the amount $E_1,\dots,E_k$ contribute to the geometric index of $\tau$ is 
\begin{equation} 
\sum_{i=1}^{l} \alpha_i + 3\sum_{i=1}^l r_i -3l .
\end{equation} 
Let $E=[v]$ be the equivalence class corresponding to the image of $X$ in $\tau'$. Then $E$ contributes $\alpha' +3r' -3$ to the index of $\tau'$, where $\alpha'$ is the number of orbits of gates at $E$ and $r'$ is the rank of $\stab(x)$. If we define $\alpha=\sum_{i=1}^l \alpha_i$ and $r=\sum_{i=1}^l r_i$, then the difference between the index contribution of $E$ in $\tau'$ and the index contribution of $E_1, \ldots, E_k$ in $\tau$ is 
\begin{equation}\label{count} 
\alpha' - \alpha +3(r' -r) + 3(l-1). 
\end{equation} 

Our goal is to control the number of $\stab(x)$-orbits of gates lost when passing from $\tau$ to $\tau'$. For all $n\in\mathbb{N}$, we let $Y_n$ be the subtree of $T_n$ spanned by the points in $f_n(X)$. Since there are finitely many $\stab(x)$-orbits of equivalence classes of vertices in $X$, the tree $Y_n$ is obtained from the $\stab(x)$-minimal invariant subtree of $T_n$ by attaching finitely many orbits of finite trees (if $\stab(x)=\{e\}$, then $Y_n$ is a finite subtree of $T_n$). Recall that we assumed that $\cala(\tau)$ is a maximal elliptic free factor system in $T$. Using Proposition \ref{description-stab}, this implies that $\stab(x)$ is either cyclic or contained in $\cala(\tau)$ (it may be trivial). In the second case $\stab(x)$ is elliptic in $T_n$. Hence the quotient $G_n=Y_n/\stab(x)$ is a finite graph of rank $0$ or $1$. There are $l$ marked points in $G_n$ corresponding to the images of $E_1,\ldots, E_l$. 
\\
\\
\textbf{Claims:}
\begin{enumerate} \item If $e$ is an oriented edge in $S^\tau$ which is collapsed under $\pi$, and such that $f'(\pi(e))=x$, then the gate corresponding to $e$ in $\tau$ is mapped under $f_n$ to a direction in $Y_n$ at a point in $f_n(X)$.
\item If $e$ and $e'$ are oriented edges in $S^\tau$ that determine inequivalent directions $d,d'$ in $\tau$, based at vertices in $\stab(x).\{E_1,\dots,E_l\}$, and if $\pi(e)$ and $\pi(e')$ are nondegenerate and equivalent in $\tau'$, then for all sufficiently large $n\in\mathbb{N}$, one of the directions $d,d'$ is mapped under $f_n$ to a direction in $Y_n$ at a point in $f_n(X)$.
\end{enumerate}
\textbf{Proof of claims:} For the first claim, as $e$ is collapsed its endpoints lie in $X$, so that $f_n(e)$ is an arc between two points of $f_n(X)$ in $Y_n$. For the second claim, if $\pi(e)$ and $\pi(e')$ are equivalent, then $f(e)\cap f(e')$ is nondegenerate. Then for all sufficiently large $n\in\mathbb{N}$, the intersection of $f_n(e)$ with $f_n(e')$ contains a nondegenerate segment. As $e$ and $e'$ are not equivalent in $\tau$, they are based at distinct equivalence classes of vertices in $\stab(x).\{E_1,\dots,E_l\}$. Let us call these $[v_1]$ and $[v_2]$. We have $f_n(v_1)\neq f_n(v_2)$, and the arc connecting $f_n(v_1)$ with $f_n(v_2)$ is contained in the tree $Y_n$, and is covered by the union of the two arcs corresponding to $f_n(e)$ (starting at $f_n(v_1)$) and $f_n(e')$ (starting at $f_n(v_2)$). One can check that the initial direction of either $f_n(e)$ or $f_n(e')$ must then be contained in $Y_n$.
\\
\\
\indent Since $T$ has trivial arc stabilizers, no two oriented edges in the same orbit are equivalent in $\tau'$, and therefore there are only finitely many orbits of pairs of equivalent directions in $\tau'$. Therefore, we can choose $n\in\mathbb{N}$ large enough so that the conclusion of the second claim holds for all pairs of inequivalent directions in $S^\tau$ based at vertices in $\stab(x).\{E_1,\dots,E_l\}$, whose $\pi$-images are (nondegenerate and) equivalent in $S'$. 
\\
\indent Denote by  $\mathcal{G}_n^{out}$ (resp. $\mathcal{G}_n^{in}$) the set of gates based at vertices in $\stab(x).\{E_1,\dots,E_l\}$ which are mapped outside of $Y_n$ (resp. inside $Y_n$) by $f_n$. Claim 1 implies that there is a map $\Psi$ from the set $E_n^{out}$ of edges in $\mathcal{G}_n^{out}$, to the set of  gates at $E$ in $\tau'$. Claim~2 then implies that any two edges in $E_n^{out}$ in distinct gates have distinct $\Psi$-images, so $|\calg_n^{out}/\stab(x)|\le\alpha'$. Furthermore, our definition of $E_1,\dots,E_l$ implies that the natural map (induced by $f_n$) from $\mathcal{G}_n^{in}/\stab(x)$ to the set $\text{Dir}(G_n)$ of directions based at the marked points in the quotient graph $G_n$, is injective, so $|\calg_n^{in}/\stab(x)|\le |\text{Dir}(G_n)|$. By summing the above two inequalities, it follows that $\alpha-\alpha'$ is bounded above by the number of directions based at the marked points in the quotient graph $G_n$. 

We will now distinguish three cases. Notice that up to passing to a subsequence, we can assume that one of them occurs.
\\
\\
\textbf{Case 1:} The stabilizer $\text{Stab}(x)$ is either trivial, or fixes a  point $x_n$ in all trees $T_n$, and for all $n\in\mathbb{N}$, there is a vertex $v_n\in V(S^\tau)$ such that $f_n(v_n)=x_n$ (this happens in particular if $\stab(x)$ is contained in $\cala(\tau)$). 
\\ In this case, we have $r=r'$, and the graph $G_n$ has rank $0$. The $l$ marked points of $G_n$ include the leaves of $G_n$: indeed, every leaf of $G_n$ is either the projection to $G_n$ of a point in $f_n(X)$, or else it is the projection to $G_n$ of the unique point $y\in T_n$ with stabilizer equal to $\stab(x)$. In the latter case, we again have $y\in f_n(X)$ by assumption. We will apply the following fact to the graph $G_n$. 
\\
\\
\textbf{Fact:} If $G$ is a finite connected graph of rank $r$ with $l$ marked points containing all leaves of $G$, then there are at most $2(l+r-1)$ directions at these marked points.
\\
\\
\textbf{Proof of the fact:} An Euler characteristic argument shows that in any finite connected graph $G$ with $v$ vertices, there are exactly $2(v+r-1)$ directions at the vertices. Viewing the vertex set as a set of marked points, removing any non-leaf vertex from this set loses at least two directions, from which the fact follows.  
\\
\\
This fact applied to the quotient graph $G_n$ shows that 
\begin{equation}\label{gate} 
\alpha' - \alpha \geq -2(l-1). 
\end{equation} 
From \eqref{count} we see that the new equivalence class contributes at least $l-1$ more to the geometric index than the sum of the contributions of the previous equivalence classes to the index of $\tau$. This shows that the index increases as soon as $l\ge 2$, so we are left with the case where $l=1$. 

In this remaining situation, the graph $G_n$ is a single point, so all but possibly one (say $E_1$) of the equivalence classes $E_1,\dots,E_k$ are exceptional. By Claim~1 no edges corresponding to directions at $X$ are collapsed under $\pi$. Furthermore, by Claim~2 distinct equivalence classes of gates based at vertices in the orbit of $E_1$ are mapped to distinct gates under $\pi$. It follows that either there are more gates at the equivalence class $X$ when passing from $\tau$ to $\tau'$ (so the index increases), or the gates corresponding to directions at $E_1$ are mapped bijectively and each exceptional class in $E_2$, \ldots, $E_k$ corresponds to a specialization of $\tau$ (otherwise we would see extra gates). Repeating this argument across all equivalence classes in $\tau'$, we find that either the geometric index of $\tau'$ is greater than that of $\tau$, or $\tau'$ is obtained from $\tau$ by applying a finite number of specializations. In this latter case, this implies that $T$ is carried by $\tau$. 
\\
\\
\textbf{Case 2:} The group $\text{Stab}(x)$ is not elliptic in $T_n$ (in particular $\stab(x)$ is cyclic).
\\ In this case, we have $r'=1$ and $r=0$. The graph $G_n$ is then a circle, with finitely many finite trees attached, whose leaves are the projections of points in $f_n(X)$. In particular $G_n$ has rank $r'-r=1$, and all its leaves are marked. By applying the above fact, we thus get that 
\begin{equation}\label{gate2} 
\alpha' - \alpha \geq -2(l+r'-r-1). 
\end{equation} 
Since $r'-r=1$, we get from \eqref{count} that the new equivalence class contributes at least $l$ more to the geometric index than the sum of the contributions of the previous equivalence classes to the index of $\tau$, so $i(\tau')>i(\tau)$.
\\
\\
\textbf{Case 3:} The stabilizer $\text{Stab}(x)$ is cyclic, fixes a point $x_n$ in all trees $T_n$, but no vertex in $S^\tau$ is mapped to $x_n$ under $f_n$.
\\ In this case, we have $r=0$ and $r'=1$. The graph $G_n$ has rank $0$.  The image of $x_n$ in the quotient graph $G_n$ might not be marked, however the set of marked points in $G_n$ includes all other leaves. We will use the following variation on the above fact (which is easily proved by first including the missing leaf to the set of marked points). 
\\
\\
\textbf{Fact:} Suppose $G$ is a finite connected graph of rank $r$ with a set of $l$ marked points containing all leaves except possibly one. Then there are at most $2(l+r-1)+1$ directions at these marked points. 
\\
\\ This fact, applied to the graph $G_n$, shows that 
\begin{equation}\label{gate-2} 
\alpha' - \alpha \geq -2(l-1)-1. 
\end{equation}
Since $r'-r=1$, we get from \eqref{count} that the new equivalence class contributes at least $l+1$ more to the geometric index than the sum of the contributions of the previous equivalence classes to the index of $\tau$, and again we are done.
\end{proof}

\section{The cells $P(\tau)$ have dimension at most $0$.}\label{sec-dim}

The goal of this section is to prove the following fact.

\begin{prop}\label{dim-0}
Let $\tau$ be an admissible train-track. Then $P(\tau)$ has dimension at most $0$.
\end{prop}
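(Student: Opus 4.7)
The plan is to show that every point $\xi\in P(\tau)$ admits arbitrarily small clopen neighborhoods inside $P(\tau)$; by the elementary criterion recalled in the introduction this forces $\dim(P(\tau))\le 0$. The strategy is to refine $P(\tau)$ by folding $\tau$, obtaining smaller and smaller cells around $\xi$ that are open in $P(\tau)$, and to use Proposition~\ref{closed} to get that they are also closed.

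First I would introduce a \emph{folding move} at an illegal turn. Given $\tau$ and an illegal pair of directions $(d,d')$ at a vertex $v$ (meaning $d\sim^{\tau}_{D,[v]}d'$), one partially identifies initial segments of the two edges carrying $d$ and $d'$. There are finitely many combinatorial outcomes, depending on how far one folds before two other vertices collide, on how gates and vertex equivalences merge, and on any specializations that need to be introduced; this yields a finite list $\tau^1,\dots,\tau^k$ of folded tracks. The first key lemma to prove is that $P(\tau)=P(\tau^1)\cup\dots\cup P(\tau^k)$ and that each $P(\tau^j)$ is open in $P(\tau)$. Covering follows because any $T\in P(\tau)$ has a carrying map $f:S^\tau\to T$, and the behavior of $f$ on a neighborhood of $v$ must match one of the combinatorial outcomes; openness comes from continuity of carrying maps (Lemma~\ref{carrier-unique}) together with the observation that the distinguishing conditions (which edges are identified under $f$, which further vertex identifications occur, how gates split) are open conditions in the equivariant Gromov--Hausdorff topology.

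Next I would iterate folding to produce a sequence $\tau=\tau_0,\tau_1,\tau_2,\dots$ with $\xi\in P(\tau_i)$ for every $i$, by always picking at each step the piece of the decomposition containing $\xi$. The central technical step is to prove $\mathrm{diam}(P(\tau_i))\to 0$ in the visual metric on the relevant Gromov boundary. For this I would assign metrics to the underlying trees $S^{\tau_i}$, view them as points of $cv_N$, and check that the resulting sequence projects under $\pi$ to an unparameterized quasi-geodesic in $FF_N$ (resp.\ $I_N$, $FZ_N$) converging to $\xi$. Any other $\eta\in P(\tau_i)$ is the boundary image of trees carried by $\tau_i$, so it is reached by a quasi-geodesic ray from $\pi(S^{\tau_0})$ that passes uniformly close to $\pi(S^{\tau_i})$; Gromov hyperbolicity of the graphs forces the Gromov product $(\xi\mid\eta)_{\pi(S^{\tau_0})}$ to grow with $i$, and by definition of the visual metric this yields $\mathrm{diam}(P(\tau_i))\to 0$. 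Given $\xi$ and $\epsilon>0$, pick $i$ with $\mathrm{diam}(P(\tau_i))<\epsilon$: the set $P(\tau_i)$ is open in $P(\tau)$ by iterating step one, and closed in $P(\tau)\subseteq X_{i(\tau)}=X_{i(\tau_i)}$ by Proposition~\ref{closed}. It is therefore a clopen neighborhood of $\xi$ in $P(\tau)$ of diameter less than $\epsilon$, which concludes the proof.

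The step I expect to be the main obstacle is the diameter estimate: turning combinatorial folding moves into genuine geometric progress in the hyperbolic graph, so that the folding sequence projects to a quasi-geodesic converging to $\xi$. This requires understanding how folds change distances in $FF_N$, $I_N$ or $FZ_N$, and in the $FZ_N$ case systematically working with mixing representatives (using Corollary~\ref{cor-mixing} to keep the elliptic free factor system under control). A secondary, more bookkeeping-heavy obstacle is the enumeration of folded tracks in step one, in particular the care needed when folds create new exceptional vertex classes that must be absorbed as specializations in the sense of Definition~\ref{de-specialization}.
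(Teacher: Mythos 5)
Your proposal is correct and follows essentially the same route as the paper: refine $P(\tau)$ by folding at illegal turns, show the resulting cells $P(\tau_i)$ are open (via continuity of carrying maps) and closed (via Proposition~\ref{closed}), and show their diameters tend to $0$ because the folding sequence projects to a quasi-geodesic in the hyperbolic graph converging to $\xi$. The obstacles you flag are exactly the ones the paper addresses, the diameter estimate via Lemma~\ref{converge} and Mann's quasi-geodesity of folding paths, and the bookkeeping of singular/partial/full folds and specializations in Section~\ref{sec-moves}.
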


The strategy of our proof of Proposition~\ref{dim-0} is the following. Given any point $\xi\in P(\tau)$, our goal is to construct arbitrarily small open neighborhoods of $\xi$ with empty boundary in $P(\tau)$. This will be done using folding sequences of train-tracks. In Section~\ref{sec-strat}, we defined the notion of a specialization, which gives a new train-track $\tau'$ from a train-track $\tau$. In Section \ref{sec-moves}, we will introduce other operations called \emph{folding moves}. These will enable us to define a new train-track $\tau'$ from a train-track $\tau$ by folding at an illegal turn. We will then make the following definition.

\begin{de}[\textbf{\emph{Folding sequence of train-tracks}}]\label{de-fsott}
A \emph{folding sequence of train-tracks} is an infinite sequence $(\tau_i)_{i\in\mathbb{N}}$ of admissible train-tracks such that for all $i\in\mathbb{N}$, the train-track $\tau_{i+1}$ is obtained from $\tau_i$ by applying a folding move followed by a finite (possibly trivial) sequence of specializations.
\\ Given $\xi\in\partial_\infty FZ_N$, we say that the folding sequence of train-tracks $(\tau_i)_{i\in\mathbb{N}}$ is \emph{directed by $\xi$} if $\xi\in P(\tau_i)$ for all $i\in\mathbb{N}$. 
\end{de}

We will first prove in Section~\ref{sec-moves} that folding sequences of train-tracks exist.

\begin{lemma}\label{fold-sequence}
Let $\tau$ be an admissible train-track, and let $\xi\in P(\tau)$. 
\\ Then there exists a folding sequence of train-tracks $(\tau_i)_{i\in\mathbb{N}}$ directed by $\xi$, such that $\tau_0$ is obtained from $\tau$ by a finite (possibly trivial) sequence of specializations.
\end{lemma}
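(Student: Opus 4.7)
The plan is to construct the folding sequence inductively, starting from a carrying map to a mixing representative of $\xi$, and folding at illegal turns at each stage.

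First I would unwind the hypothesis $\xi \in P(\tau)$. By the definition of carrying for an equivalence class, there is a mixing representative $T$ of $\xi$ that is carried by $\tau$, i.e. Lemma~\ref{carrier-unique} gives a unique carrying map $f:S^\tau \to T$ linear on edges and collapsing no edge. The induced train-track $\tau_f$ is obtained from $\tau$ by a finite (possibly trivial) sequence of specializations. I would set $\tau_0:=\tau_f$; this takes care of the ``specialization initialization'' required in the statement and puts us in the normalized situation where the train-track on $S^{\tau_0}$ \emph{equals} the one induced by the carrying map.

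The inductive step is to produce $\tau_{i+1}$ from $\tau_i$, given that $\tau_i$ is admissible, $T$ is carried by $\tau_i$ via a carrying map $f_i:S^{\tau_i}\to T$, and $\tau_{f_i}=\tau_i$. The key geometric observation driving the induction is that $f_i$ cannot be locally injective. Indeed, if $f_i$ had no illegal turn and no two equivalent vertices, then $f_i$ would be an $F_N$-equivariant isometric immersion of a simplicial tree into $T$; its image would be an $F_N$-invariant subtree, hence all of $T$ by minimality, forcing $T$ to be simplicial. But a simplicial tree in $\overline{cv}_N$ is compatible with itself (viewed as a $\mathcal Z$-splitting, since edge stabilizers are trivial or cyclic by very smallness), so it cannot be $\mathcal Z$-averse, contradicting our hypothesis that $T$ is mixing $\mathcal Z$-averse. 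Therefore $\tau_i$ has at least one illegal turn available to fold. I would pick any such illegal turn and invoke the folding moves developed in Section~\ref{sec-moves}: these produce finitely many train-tracks $\tau_i^{(1)},\dots,\tau_i^{(r)}$ whose cells $P(\tau_i^{(j)})$ together cover $P(\tau_i)$, so $\xi$ lies in one of them, say $P(\tau_i^{(j_0)})$. I would then replay the construction of $\tau_0$: using Lemma~\ref{carrier-unique} applied to $\tau_i^{(j_0)}$ and a mixing representative of $\xi$, take the resulting carrying map and pass to its induced track $\tau_{i+1}$ via a finite sequence of specializations of $\tau_i^{(j_0)}$. By construction $\tau_{i+1}$ is admissible, it carries $\xi$, and its induced track equals itself, so the induction continues.

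The main obstacle is making sure that the folding step genuinely advances the process and that $\xi$ is honestly in $P(\tau_{i+1})$ rather than just being carried by $\tau_{i+1}$. The first issue reduces to the non-simplicial argument above together with the guarantee (to be given in Section~\ref{sec-moves}) that each folding move strictly reduces some natural complexity of the carrying map, so we never revisit the same $\tau_i$. The second issue boils down to the index equality $i(\tau_{i+1})=i(\xi)$: this follows from $\xi\in P(\tau_i)$ and the design of folding moves in Section~\ref{sec-moves}, which ensure that moving from $\tau_i$ to one of the $\tau_i^{(j)}$ preserves the index on points that remain carried, and from the fact that specializations do not change whether a train-track is an ideal carrier of $\xi$ (this is where Proposition~\ref{closed}, or rather the analysis of how $\tau_f=\tau$ entails equality of indices, can be used as a black box). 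Once this is in place, the resulting sequence $(\tau_i)_{i\in\mathbb N}$ satisfies the definition of a folding sequence of train-tracks directed by $\xi$.
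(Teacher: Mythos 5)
Your proposal follows essentially the same route as the paper: normalize by passing to the induced track $\tau_f$ (a finite sequence of specializations of $\tau$), use the fact that the mixing representative $T$ has dense orbits (hence is not simplicial) to find an illegal turn, apply the appropriate folding move according to how the carrying map behaves on that turn, and iterate — the paper packages the inductive step as Lemma~\ref{fold-exist} and the iteration as Lemma~\ref{strong}. The only cosmetic difference is that the paper states the fold step as a pointwise existence statement for the fixed representative $T$ (via the trichotomy $f(v_1)=f(v_2)$ / proper overlap / $f(e_1)\subsetneq f(e_2)$ leading to a singular, partial or full fold), rather than as a finite cover of $P(\tau_i)$ by subcells as you phrase it; the pointwise formulation is the safer one, since a priori there are infinitely many candidate full folds (one per choice of special gate) and the covering claim is not literally established.
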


We will then prove in Section \ref{sec-open} that all sets $P(\tau_i)$ in a folding sequence of train-tracks are open in $P(\tau_0)$, by showing the following two facts.

\begin{lemma}\label{specialization}
Let $\tau$ be an admissible train-track, and let $\tau'$ be a specialization of $\tau$. Then $\tau'$ is admissible, and $P(\tau')$ is an open subset of $P(\tau)$.
\end{lemma}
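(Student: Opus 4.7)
I would prove the lemma in three steps: admissibility of $\tau'$, the containment $P(\tau')\subseteq P(\tau)$, and openness.

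For admissibility, let $X_0=[v_0]_\tau$ (exceptional) and $X_1=[v_1]_\tau$ be the classes merged by the specialization into a single $\tau'$-class. At any vertex $v\in S^{\tau'}=S^\tau$ outside $F_N\cdot(X_0\cup X_1)$, bullet 3 of Definition~\ref{de-specialization} preserves the $\sim_D$-classes, so the $\tau$-tripod at $v$ carries over; one only needs to check that its axes remain legal in $\tau'$. At vertices in the merged orbit I would use the $\tau$-tripod at $v_1$: its three gates at $v_1$ retain their $\tau$-equivalences (bullet 3 again, since $v_1\notin [v_0]_\tau$), and the axes remain legal in $\tau'$ once one observes that any turn they cross at a vertex in $F_N\cdot X_0$ is between two of the three $\tau$-gates at $X_0$, and under bullet 4 these map to distinct gates at $X_1$---injectivity of the gate-map being the natural condition for $\tau'$ to be admissible. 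A symmetric argument at $v_0$ completes the tripod check.

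For the inclusion $P(\tau')\subseteq P(\tau)$, I would use the fact that specialization chains compose: a carrying map $f\colon S^{\tau'}\to T$ for $\tau'$ is also a carrying map for $\tau$, obtained by prepending the step $\tau\to\tau'$ to the chain $\tau'\to\tau_f$. The indices agree, since the merged class has trivial stabilizer (as $\stab(X_0\cup X_1)\subseteq\stab(X_0)=\{1\}$ by exceptionality of $X_0$) and the same number of gates as $X_1$ had (by injectivity of the gate-map), so it contributes the same amount to $i_{geom}$ as $X_0$ and $X_1$ did separately.

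The main obstacle is openness. Let $\xi\in P(\tau')$ and let $\xi_n\to\xi$ in $P(\tau)$; pick mixing representatives $T_n\to T$ with carrying maps $f_n\to f$. I need $\tau_{f_n}$ to be a specialization chain from $\tau'$ for $n$ large. By Lemma~\ref{countable}, the combinatorial type of $\tau_{f_n}$ takes only countably many values among specialization chains starting at $\tau$, so up to a subsequence I may assume $\tau_{f_n}$ is constant, equal to some $\tau_0$. Applying the framework of Proposition~\ref{prop_index_up} with $\tau_0$ in place of $\tau$, the limit $\tau_f$ is a specialization chain from $\tau_0$ (the alternative of an index jump being ruled out by $i(\xi)=i(\tau)$). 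Since $\xi\in P(\tau')$ forces $\tau_f$ to be a specialization chain from $\tau'$ and in particular to contain the identification $v_0\sim v_1$ together with the prescribed matching of gates at $X_0$ and $X_1$, I would then argue that $\tau_0$ itself must already include the step $\tau\to\tau'$: otherwise the identification $v_0\sim v_1$ would have to arise as a fresh specialization step from $\tau_0$ whose gate-map matches $\tau'$, but the combinatorial rigidity of this discrete choice prevents such a match from being generated only in the limit without a corresponding index increase. Hence $\tau_{f_n}=\tau_0$ is a specialization chain from $\tau'$ and $\xi_n\in P(\tau')$ for large $n$, establishing openness.
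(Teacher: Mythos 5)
Your admissibility argument and the containment $P(\tau')\subseteq P(\tau)$ are fine and match the paper (its Lemma~\ref{lem_spec} and Lemma~\ref{subspace}). The gap is in the openness step, in two places. First, countability of the set of train-tracks (Lemma~\ref{countable}) does not let you extract a subsequence on which $\tau_{f_n}$ is constant: a sequence in a countably \emph{infinite} set need not repeat any value. The set of specialization chains starting at $\tau$ is genuinely infinite (the vertex to be merged with $v_0$ ranges over infinitely many choices), and the appendix of the paper describes exactly the scenario you need to rule out -- the carrying map $f_n$ identifying a vertex $v$ with a vertex $v_n$ that ``goes further and further away'' as $n\to\infty$ -- so no constant subsequence exists in general. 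Second, the heart of the matter is showing that $f_n(v_0)=f_n(v_1)$ for all large $n$, given that $f(v_0)=f(v_1)$ in the limit. This is a priori a closed condition, not an open one, so it cannot be dispatched by ``combinatorial rigidity''; and Proposition~\ref{prop_index_up} runs in the wrong direction for this (it controls the limiting track from the approximating ones, whereas you need to control $\tau_{f_n}$ from $\tau_f$).

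The paper's proof (Lemma~\ref{special}, after reducing from equivalence classes to trees via Lemma~\ref{trees-to-equiv}) supplies the missing argument: choose edges $e_0^1,e_0^2,e_0^3$ at vertices of $[v_0]$ in three distinct gates, and edges $e_1^1,e_1^2,e_1^3$ at vertices $\tau$-equivalent to $v_1$ that are identified with them in $\tau'$. Since $T$ is carried by $\tau'$, each $f(e_0^i)\cap f(e_1^i)$ is nondegenerate, and by continuity of the carrying map (Lemma~\ref{carrier-unique}) the same holds for $f_n$ when $n$ is large. The images $f_n(e_0^1),f_n(e_0^2),f_n(e_0^3)$ form a tripod at $f_n(v_0)$ because these edges lie in distinct gates; if $f_n(v_1)\neq f_n(v_0)$, then two of the segments $f_n(e_1^i)$ would be forced to share the nondegenerate arc $[f_n(v_0),f_n(v_1)]$, contradicting that the $e_1^i$ are pairwise inequivalent. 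Hence $f_n(v_0)=f_n(v_1)$ and $T_n$ is carried by $\tau'$. Note also that your reduction to trees (picking mixing representatives $T_n\to T$) silently assumes the limit $T$ is mixing and represents $\xi$; the paper justifies this via projective compactness, closedness of $\partial\pi$, and Corollary~\ref{cor-mixing} in Lemma~\ref{trees-to-equiv}.
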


\begin{lemma}\label{fold-open}
Let $\tau$ be an admissible train-track, and let $\tau'$ be a train-track obtained from $\tau$ by folding an illegal turn. Then $\tau'$ is admissible, and $P(\tau')$ is an open subset of $P(\tau)$. 
\end{lemma}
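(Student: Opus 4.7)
I will treat admissibility and openness separately, with admissibility reducing to a local calculation near the folded vertex and openness following from a sequential argument based on continuity of carrying maps (Lemma~\ref{carrier-unique}). Throughout, let $p:S^\tau\to S^{\tau'}$ denote the folding map identifying initial segments of two edges at the illegal turn $(d_1,d_2)$ being folded.

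\textbf{Admissibility of $\tau'$.} For a vertex $v'\in S^{\tau'}$ not lying in the fold, an admissibility tripod at the corresponding $v\in S^\tau$ pushes forward: using Lemma~\ref{recurrent} I can arrange that the three legal axes avoid crossing the folded turn (by replacing each axis with one of a hyperbolic element whose legal axis contains the relevant tripod edge but not the folded turn), so they remain legal in $\tau'$. At the new vertex created by the fold (the endpoint of the identified segment), I concatenate legal segments coming from each of the two original edges; since $(d_1,d_2)$ was a \emph{single} gate at the original vertex, the remaining gates at $v$ stay distinct under the fold and provide the extra directions needed to form a tripod, and Lemma~\ref{recurrent} again promotes the resulting legal segments to axes of hyperbolic elements.

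\textbf{Openness of $P(\tau')$ in $P(\tau)$.} Let $(\xi_n)$ be a sequence in $P(\tau)$ converging to some $\xi\in P(\tau')$. Choose mixing representatives $T_n\in\xi_n$ and $T\in\xi$, and pass to a subsequence so that, after rescaling, $T_n\to T$ in $\overline{cv}_N$. Let $f_n:S^\tau\to T_n$ and $f:S^\tau\to T$ be the carrying maps for $\tau$; by Lemma~\ref{carrier-unique}, $f_n\to f$. Since $\tau'$ carries $T$, there is a carrying map $f':S^{\tau'}\to T$, and the composition $f'\circ p$ is a carrying map for $\tau$, so by uniqueness in Lemma~\ref{carrier-unique} we have $f=f'\circ p$. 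The fold identifies only germs already identified by each $f_n$ (equivalent directions at an illegal turn); passing from germs to positive-length initial segments, the identifications demanded by $p$ hold exactly for $f$ and therefore hold up to arbitrarily small error for $f_n$ when $n$ is large. After adjusting the affine parameterization of the two folded edges in the metric on $S^\tau$ determined by $f_n$, I obtain a factorization $f_n=f'_n\circ p$ with $f'_n:S^{\tau'}\to T_n$ a carrying map with respect to $\tau'$ (its induced track structure being a specialization of $\tau'$, essentially inherited from the specializations of $\tau$ produced by $\tau_{f_n}$). Thus $\tau'$ carries $\xi_n$, whence $i(\xi_n)\ge i(\tau')$; combined with $\xi_n\in P(\tau)$ (so $i(\xi_n)=i(\tau)$) and the identity $i(\tau')=i(\tau)$ that will be built into the definition of folding moves, we conclude $i(\xi_n)=i(\tau')$ and hence $\xi_n\in P(\tau')$.

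\textbf{Main obstacle.} The most delicate point is obtaining the factorization $f_n=f'_n\circ p$ for finite $n$: equivalent directions are identified by $f_n$ only at the germ level, while the fold identifies positive-length initial segments. I plan to extract agreement of these segments up to small error from the equivariant Gromov--Hausdorff convergence $f_n\to f$, absorbing the discrepancy by modifying the edge lengths on $S^\tau$ assigned by $f_n$ on the folded edges. A secondary subtlety is to verify that the induced structure $\tau_{f'_n}$ is genuinely a specialization of $\tau'$ (Definition~\ref{de-specialization}) rather than of some coarser track, and that no exceptional vertex of $\tau'$ is involved in a spurious identification; this should follow from the combinatorial description of folds together with the already-established fact that $\tau_{f_n}$ is a specialization of $\tau$.
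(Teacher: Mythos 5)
Your admissibility argument follows the same lines as the paper's (push forward tripods away from the fold; at the new trivalent vertex of a partial fold, concatenate legal segments and promote them to legal axes as in Lemma~\ref{recurrent}), and is essentially fine, if sketchy. The openness argument, however, has a genuine gap at exactly the step you flag as the "main obstacle", and your proposed fix does not close it. The fold map $p$ identifies initial segments of $e_1$ and $e_2$ of a \emph{definite combinatorial type}: for a partial fold the identified segment must be strictly shorter than both $f_n(e_1)$ and $f_n(e_2)$ (so that the quotient has a genuine new trivalent vertex), for a full fold of $e_1$ into $e_2$ it must equal all of $f_n(e_1)$ (an exact equality of lengths, since $f_n$ determines the metric on $S^\tau$ and cannot be "adjusted"), and for a singular fold the endpoints $v_1,v_2$ must have the same image. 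Whether $f_n$ factors through the \emph{given} $\tau'$ therefore depends on which of these mutually exclusive configurations the overlap $f_n(e_1)\cap f_n(e_2)$ realizes, and this is not something you can absorb by reparameterizing edges: a tree whose overlap is strictly shorter than $f_n(e_1)$ is carried by a partial fold (a different simplicial tree $S^{\tau'}$) and not by the full fold. In the full-fold case there is the additional requirement that the new direction at $g(v_1)$ be identified with the \emph{special gate} $[d_0]$, which your argument never addresses. So the real content of the lemma is to show that the configuration realized by $T$ persists for nearby trees, and your "small error" argument does not establish this.

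The paper's proof handles this by a case analysis (Proposition~\ref{fold-unique}): it isolates explicit \emph{open} metric conditions characterizing each case --- $|f(e_1)\cap f(e_2)|$ strictly smaller than both $|f(e_1)|$ and $|f(e_2)|$ for the partial fold, and $|f(e_1)|<|f(e_2)|$ together with the strict triangle inequality $d_T(f(v_d),f(v_2))<d_T(f(v_d),f(v_1'))+d_T(f(v_1'),f(v_2))$ selecting the special gate for the full fold --- observes that the relevant endpoints are branch points determined by finitely many translation lengths, hence vary continuously by Lemma~\ref{carrier-unique}, and then invokes Lemmas~\ref{fold-p} and~\ref{fold-f-1} to conclude that all trees in a neighborhood are carried by the same $\tau'$ (the singular-fold case is immediate from $\calp(\tau')=\calp(\tau)$). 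You would need to supply these open conditions and verify them. A secondary gap: you assume the rescaled mixing representatives $T_n$ converge to your chosen mixing representative $T$ of $\xi$; in fact they converge to \emph{some} tree in the class $\xi$, and one must argue (as in the paper's Lemma~\ref{trees-to-equiv}, via closedness of $\partial\pi$ and Corollary~\ref{cor-mixing}) that this limit is mixing before the carrying machinery applies.
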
   

In other words, the sets $P(\tau_i)$ are open neighborhoods of $\xi$ in $P(\tau)$, and they are closed in $P(\tau)$ (because their boundaries are made of trees with higher index in view of Proposition~\ref{closed}). So to complete the proof of Proposition~\ref{dim-0}, we are left showing that $P(\tau_i)$ can be made arbitrary small. This is proved in Section~\ref{sec-diam} in the form of the following proposition.

\begin{lemma}\label{fold-diam}
Let $\tau$ be an admissible train-track, let $\xi\in P(\tau)$, and let $(\tau_i)_{i\in\mathbb{N}}$ be a folding sequence of train-tracks directed by $\xi$. 
\\ Then the diameter of $P(\tau_i)$ converges to $0$.
\end{lemma}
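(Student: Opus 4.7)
The strategy is to use Gromov hyperbolicity of the ambient graph $\calg \in \{FF_N, I_N, FZ_N\}$ and the standard shadow estimate for quasi-geodesics on the visual boundary of a hyperbolic space. Concretely, I would show that the vertices $v_i := \pi(S^{\tau_i}) \in \calg$ associated with the folding sequence form an unparameterized quasi-geodesic escaping to infinity, with $v_i \to \xi$ in $\calg \cup \partial_\infty \calg$, and that every $\eta \in P(\tau_i)$ is the endpoint of a quasi-geodesic ray from $v_0$ passing within uniformly bounded distance of $v_i$. The definition of the visual metric then yields $\mathrm{diam}(P(\tau_i)) \le C e^{-\alpha d_\calg(v_0,v_i)} \to 0$.

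First I would verify that a single step of a folding sequence changes $v_i$ to $v_{i+1}$ by a uniformly bounded distance in $\calg$: a folding move factors through a common refinement of $S^{\tau_i}$ and $S^{\tau_{i+1}}$ (the tree before the edges are identified collapses onto both), and specializations do not alter the underlying simplicial tree; so $v_i$ and $v_{i+1}$ are at bounded distance in $FZ_N$, hence in $FF_N$ and $I_N$ by the coarsely Lipschitz comparison maps. Next, to see that $d_\calg(v_0,v_i) \to \infty$, I would argue by contradiction: if the sequence stayed in a ball, the combinatorial complexity of the $\tau_i$ (the number of edges, the vertex and direction equivalence classes, etc.) can only refine so many times before forcing the projection to leave any bounded set, using that each folding move makes a nontrivial identification of directions inherited from the carrying maps.

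For the convergence $v_i \to \xi$, I would use that $\xi \in P(\tau_i)$ for every $i$, so Lemma~\ref{carrier-unique} produces carrying maps $f_i : S^{\tau_i} \to \overline{T}$ into a fixed mixing representative $\overline{T}$ of $\xi$. As one folds along illegal turns, the carrying maps become isometric on longer and longer legal segments, and the length of the longest legal segment in $S^{\tau_i}$ that is isometrically embedded by $f_i$ tends to infinity. After appropriate rescaling, this forces $S^{\tau_i} \to \overline{T}$ in $\overline{cv}_N$, and then continuity of the boundary extension $\partial\pi$ in Theorems \ref{bdy-ff}--\ref{bdy-fz} gives $v_i \to \xi$ in $\calg \cup \partial_\infty \calg$.

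Combining the pieces, for any $\eta \in P(\tau_i)$ Lemma \ref{fold-sequence} produces its own folding sequence starting from (a specialization of) $\tau_i$ and directed by $\eta$; the previous steps turn this into an unparameterized quasi-geodesic from $v_i$ to $\eta$ with constants depending only on $N$. Concatenating with the initial quasi-geodesic $v_0,\dots,v_i$ yields a quasi-geodesic ray from $v_0$ to $\eta$ passing uniformly close to $v_i$, and the standard shadow lemma in a Gromov hyperbolic space bounds the visual diameter of the set of such endpoints by $C e^{-\alpha d_\calg(v_0,v_i)}$. The main obstacle I expect is the combinatorial escape step, namely ruling out folding sequences that stay in a bounded region of $\calg$; this needs uniform control on how much each folding move changes the projection and on how much of the carrying data is genuinely refined, independently of the chosen $\eta \in P(\tau_i)$, in order to extract uniform quasi-geodesic constants.
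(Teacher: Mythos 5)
Your overall architecture is the same as the paper's: produce, for each $\eta\in P(\tau_i)$, a quasi-geodesic ray in the graph from $v_0=\pi(S^{\tau_0})$ to $\eta$ passing uniformly close to $v_i=\pi(S^{\tau_i})$, show $v_i\to\xi$ so that $d(v_0,v_i)\to\infty$, and conclude by the visual-metric estimate. The extension step via Lemma~\ref{fold-sequence} (in the paper, its strengthening Lemma~\ref{strong}, which makes the new sequence agree with $\tau_0,\dots,\tau_{i-1}$ and pass through a specialization of $\tau_i$, hence through the same underlying tree $S^{\tau_i}$) and the convergence $S^{\tau_i}\to\xi$ in $FZ_N\cup\partial_\infty FZ_N$ (Lemma~\ref{converge}, proved by taking a limit $S_\infty$ of the metrized trees $S_n$ in $\overline{cv}_N$, showing $S_\infty$ is nondegenerate, non-simplicial, with dense orbits, $\calz$-averse and $\approx$-equivalent to $T$, then applying continuity of $\partial\pi$) are both present in your sketch, the latter in rougher form.

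The genuine gap is in how you propose to get quasi-geodesicity of the folding path in the graph. You plan to derive it from two ingredients: each folding move displaces the projection by a uniformly bounded amount, and the sequence escapes every bounded set. That is not sufficient: a path in a hyperbolic graph with uniformly bounded jumps that leaves every ball need not be an (unparameterized) quasi-geodesic, and without quasi-geodesicity the fact that the path to $\eta$ passes through $v_i$ gives no lower bound on the Gromov product $(\eta\,|\,\eta')_{v_0}$, which is what the diameter estimate requires. The paper does not attempt such a derivation; it observes that the trees $S^{\tau'_j}$ lie on the image of an optimal liberal folding path in $cv_N$ and invokes Mann's theorem that such paths project to (reparameterized) quasi-geodesics in $FZ_N$. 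This external input is the missing ingredient in your argument; your ``combinatorial escape step'' worry is, by contrast, already resolved by your own third paragraph (once $v_i\to\xi\in\partial_\infty FZ_N$, the distances $d(v_0,v_i)$ automatically tend to infinity). A secondary imprecision: you claim the rescaled $S^{\tau_i}$ converge to the chosen mixing representative $\overline{T}$ itself, whereas the limit is a priori only some tree in the $\approx$-class of $\xi$ (possibly after non-obvious work to exclude degenerate, simplicial, or non-dense-orbit limits); the weaker statement suffices, but the verification is where most of the work in Lemma~\ref{converge} lies.
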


We now sum up the proof of Proposition~\ref{dim-0} from the above four lemmas.

\begin{proof}[Proof of Proposition~\ref{dim-0}]
Assume that $P(\tau)\neq\emptyset$, and let $\xi \in P(\tau)$. Let $\epsilon >0$. Let $(\tau_i)_{i\in\mathbb{N}}$ be a folding sequence of train-tracks directed by $\xi$  provided by Lemma~\ref{fold-sequence}, where  $\tau_0$ is obtained from $\tau$ by a finite (possibly trivial) sequence of specializations. Lemma~\ref{fold-diam} shows that we can find $k\in\mathbb{N}$ such that $P(\tau_k)$ contains $\xi$ and has diameter at most $\epsilon$. An iterative application of Lemmas~\ref{specialization} and~\ref{fold-open} then ensures that $P(\tau_k)$ is an open neighborhood of $\xi$ in $P(\tau)$. Furthermore, the boundary of $P(\tau_k)$ in $P(\tau)$ is empty (it is made of trees of higher index by Proposition~\ref{closed}). As each point in $P(\tau)$ has arbitrarily small open neighborhoods with empty boundary, $\dim(P(\tau))=0$.
\end{proof}

\subsection{Existence of folding sequences of train-tracks directed by a boundary point}\label{sec-moves}

\subsubsection{More on specializations}

The notion of specialization was given in Definition~\ref{de-specialization}. 

\begin{lemma}\label{lem_spec}
Let $\tau$ be an admissible train-track, and let $\tau'$ be a specialization of $\tau$.
\\ Then $\tau'$ is admissible, and $i(\tau')=i(\tau)$.
\end{lemma}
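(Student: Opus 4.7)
The plan is to establish the two assertions separately: admissibility of $\tau'$, and the equality $i(\tau')=i(\tau)$. Since $S^{\tau'}=S^\tau$ as $F_N$-trees, the collections of point stabilizers coincide, so $\mathcal{A}(\tau')=\mathcal{A}(\tau)$ and in particular $h(\mathcal{A}(\tau'))=h(\mathcal{A}(\tau))$; the index equality therefore reduces to $i_{geom}(\tau')=i_{geom}(\tau)$.

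To compare the geometric indices, I would first analyze how $F_N$-orbits of vertex equivalence classes are modified. By conditions (2) and (3) in Definition~\ref{de-specialization}, the only affected orbits are those of $[v_0]_\tau$ and $[v_1]_\tau$, which amalgamate into a single orbit in $\tau'$ represented by the merged class $[v_0]_{\tau'}$. Since $v_0$ and $v_1$ lie in distinct $F_N$-orbits of vertices, no group element can send $v_0$ into the orbit of $v_1$; combined with the fact that the exceptional class $[v_0]_\tau$ has trivial stabilizer (its contribution to $i_{geom}(\tau)$ being $0$), one concludes $\stab([v_0]_{\tau'})=\stab([v_1]_\tau)$. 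The contribution of $[v_0]_\tau$ to $i_{geom}(\tau)$ is $0$, so it suffices to check that the contribution of $[v_0]_{\tau'}$ in $\tau'$ equals that of $[v_1]_\tau$ in $\tau$.

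For this, I would use conditions (3) and (4) to set up a natural bijection between the $\sim_D^{\tau'}$-equivalence classes of directions at $[v_0]_{\tau'}$ and the $\sim_D^\tau$-classes of directions at $[v_1]_\tau$: by condition (4), each $\sim_D^{\tau'}$-class at the merged class contains a direction at $[v_1]_\tau$, and by condition (3), its intersection with the directions at $[v_1]_\tau$ forms a single $\sim_D^\tau$-class. This bijection is $\stab([v_1]_\tau)$-equivariant, so the numbers of gate orbits at $[v_0]_{\tau'}$ in $\tau'$ and at $[v_1]_\tau$ in $\tau$ agree, yielding $i_{geom}(\tau')=i_{geom}(\tau)$.

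For admissibility of $\tau'$: at a vertex outside the orbit of $[v_0]$, admissibility is inherited from $\tau$ because direction equivalences and legality of turns are preserved by condition (3). At a vertex $v$ in the orbit of $[v_0]$, the bijection above shows that the three gates pre-existing at $v$ in $\tau$ remain in distinct $\sim_D^{\tau'}$-classes, providing three pairwise inequivalent directions. To realize them through a tripod of legal axes, I would start from a tripod of legal elements at $v_1$ (or at $v$) supplied by admissibility of $\tau$; wherever such an axis passes through a vertex in the orbit of $[v_0]$ at which the newly imposed identifications might create an illegal turn, I would invoke Lemma~\ref{recurrent} to concatenate short legal segments and replace the element by an appropriate product whose axis is still legal in $\tau'$ and crosses the required pair of gates.

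The step I expect to require the most care is confirming injectivity of the absorption of the three gates at $[v_0]_\tau$ into the gates at $[v_1]_\tau$ prescribed by condition (4): this is what simultaneously ensures the persistence of three pairwise inequivalent directions at each vertex of $[v_0]_\tau$ in $\tau'$ and the preservation of the gate count in the index computation. Extracting it requires exploiting the combined content of the four clauses defining specialization—most notably the minimality clause in (2) together with $v_0$ and $v_1$ lying in distinct $F_N$-orbits.
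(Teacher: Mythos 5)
Your argument follows essentially the same route as the paper's (much terser) proof, which likewise notes that $\mathcal{A}(\tau')=\mathcal{A}(\tau)$, that the exceptional class $[v_0]$ contributes $0$ to $i_{geom}(\tau)$, that stabilizers do not increase and no new gates are created, and that admissibility is immediate because the underlying tree and the collection of legal turns are unchanged. The injectivity of the absorption of the three gates at $[v_0]$ into gates at $[v_1]$, which you single out as the delicate point, is precisely what the paper's assertion that ``the collection of legal turns at each vertex is unchanged'' takes for granted, so your more detailed bookkeeping is consistent with, and if anything more careful than, the published proof.
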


\begin{proof}
Admissibility is clear, as the underlying tree $S^\tau$ and the collection of legal turns at each vertex of $S^\tau$ are unchanged after a specialization. To see that $i(\tau')=i(\tau)$, notice first that the exceptional vertices in $S^\tau$ contribute $0$ to the geometric index of $\tau$. Stabilizers do not increase by definition of a specialization (otherwise two inequivalent vertices in $\tau$ that are not in the orbit of $v_0$ would become equivalent in $\tau'$), and no new gate is created. So $i_{geom}(\tau')=i_{geom}(\tau)$, and since stabilizers of equivalence classes of vertices are the same in $\tau$ and in $\tau'$, we have $i(\tau')=i(\tau)$.
\end{proof}

\subsubsection{Folding moves}

We now introduce three types of folding moves and discuss some of their properties. 

\begin{de}[\emph{\textbf{Singular fold, see Figure \ref{fig-singular}}}] \label{d:singular}
Let $\tau$ be a train-track. Let $e_1=[v,v_1]$ and $e_2=[v,v_2]$ be two edges in $S^\tau$ that are based at a common vertex $v$, determine equivalent directions at $v$, and such that $v_1\sim^\tau v_2$. 
\\ A train-track $\tau'$ is obtained from $\tau$ by a \emph{singular fold} at $\{e_1,e_2\}$ if 
\begin{itemize}
\item there is an $F_N$-equivariant map $g:S^{\tau}\to S^{\tau'}$ that consists in equivariantly identifying $e_1$ with $e_2$,
\item for all vertices $v,v'\in V(S^\tau)$, we have $g(v')\sim^{\tau'}g(v)$ if and only if $v'\sim^{\tau}v$,
\item for all directions $d,d'$ in $S^{\tau}$ based at equivalent vertices of $S^\tau$, we have $g(d')\sim^{\tau'} g(d)$ if and only if $d'\sim^{\tau}d$.
\end{itemize}
\end{de}

\begin{rk}\label{r:singular_exists}
If $\tau$ satisfies the hypothesis of Definition~\ref{d:singular} with edges $e_1$ and $e_2$, then such a singular fold $\tau'$ at $\{e_1,e_2\}$ exists and is unique as long as the tree $S'$ obtained by folding $S^\tau$ along $e_1$ and $e_2$ has trivial edge stabilizers. As any carrying map factors through the folding map $g:S^\tau \to S'$, a sufficient condition for the existence of $\tau'$ is that $\tau$ carries a tree with trivial arc stabilizers. This covers all of the relevant train-tracks in this paper. 
\end{rk}


\begin{figure}
\begin{center}
\input{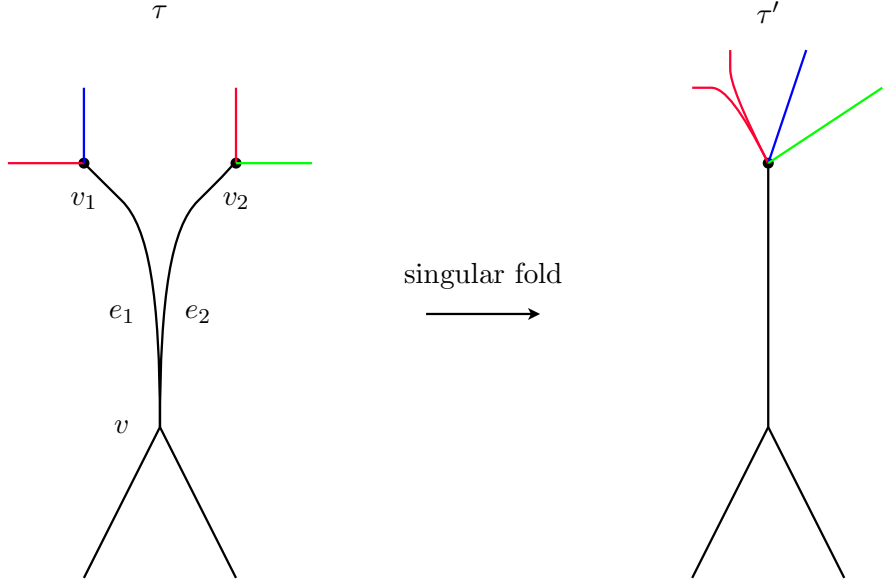}
\caption{A singular fold. The two black vertices $v_1$ and $v_2$ are equivalent in $\tau$, and the colors give the gates at the equivalence class of vertices they define.}
\label{fig-singular}
\end{center}
\end{figure}

\begin{de}\label{de-p}
Given an admissible train-track $\tau$, we denote by $\calp(\tau)$ the subspace of $\partial{cv}_N$ made of trees $T$ with dense orbits such that $\tau$ is an ideal carrier of $T$.  
\end{de}

In particular, trees in $\calp(\tau)$ have trivial arc stabilizers. 

\begin{lemma}\label{lem_singular}
Let $\tau$ be an admissible train-track, and let $e_1=[v,v_1]$ and $e_2=[v,v_2]$ be two edges that form an illegal turn in $\tau$, such that $v_1\sim^\tau v_2$, and such that the directions at $v_1$ and $v_2$ pointing towards $v$ are $\tau$-equivalent. 
\\ If $\tau$ carries a tree with trivial arc stabilizers then there exists an admissible train-track $\tau'$ obtained from $\tau$ by a singular fold at $\{e_1,e_2\}$ such that $\calp(\tau')=\calp(\tau)$.
\end{lemma}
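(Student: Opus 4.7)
The plan has three parts: admissibility of $\tau'$, index preservation, and the bijective correspondence of carrying maps.

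First I would check admissibility of $\tau'$. The underlying tree $S^{\tau'}$ is the quotient of $S^\tau$ identifying $e_1$ with $e_2$. For a vertex $w'$ in $S^{\tau'}$ not in the orbit of $g(v_1)=g(v_2)$ or $g(v)$, pick any preimage and use the tripod of legal elements at that vertex in $\tau$; since the fold does not affect equivalence classes of directions at vertices outside the affected orbits (by the third bullet of Definition~\ref{d:singular}), legality is preserved and the tripod descends to $S^{\tau'}$. At $g(v)$, the gates are unchanged since $e_1$ and $e_2$ already belonged to the same gate in $\tau$. At $g(v_1)=g(v_2)$, the tripod of legal elements at $v_1$ in $\tau$ uses directions that remain pairwise inequivalent after the fold, again by the third bullet. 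Index preservation is then a direct combinatorial verification: a singular fold preserves the set of $F_N$-orbits of equivalence classes of vertices, their stabilizers, and their gate-counts (since $e_1$ and $e_2$ determined the same gate in $\tau$), and does not alter $\cala(\tau)$. Hence $i(\tau')=i(\tau)$, so that once $\calp(\tau)=\calp(\tau')$ is established, the condition of being an ideal carrier will match up automatically.

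Next I would prove the inclusion $\calp(\tau)\subseteq\calp(\tau')$. Let $T\in\calp(\tau)$ with carrying map $f:S^\tau\to T$. Since $\tau_f$ coarsens $\sim_V^\tau$ (as it is obtained by specializations) and $v_1\sim_\tau v_2$, we have $f(v_1)=f(v_2)$; similarly, since $e_1$ and $e_2$ lie in the same $\tau$-gate at $v$ and hence in the same $\tau_f$-gate, the initial directions of $f(e_1)$ and $f(e_2)$ at $f(v)$ agree. As $T$ is an $\bbR$-tree with trivial arc stabilizers, the unique arc from $f(v)$ to $f(v_1)=f(v_2)$ is traced out by both linear maps $f|_{e_1}$ and $f|_{e_2}$; isometries of $[0,1]$ onto a common arc agreeing at both endpoints agree everywhere, so $f|_{e_1}$ and $f|_{e_2}$ coincide under the identification $e_1\equiv e_2$ given by $g$. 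Therefore $f$ factors as $f=f'\circ g$ for a unique $F_N$-equivariant linear map $f':S^{\tau'}\to T$, which collapses no edge (each edge of $S^{\tau'}$ has a preimage under $g$ on which $f$ is non-collapsing).

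Finally, for the converse $\calp(\tau')\subseteq\calp(\tau)$, given a carrying map $f':S^{\tau'}\to T$, set $f:=f'\circ g$; linearity and non-collapsing are immediate. In both directions one must then verify that the induced train-track structure ($\tau_f$ or $\tau_{f'}$) is obtained from $\tau$ (respectively $\tau'$) by a finite sequence of specializations. This is the main technical obstacle: one has to lift/push-down the sequence of specializations realizing $\tau_f$ through the quotient $g$ and check at each step that the four bullets of Definition~\ref{de-specialization} are preserved—in particular that the extra identified orbit is honestly exceptional and that the fourth (direction-distribution) condition survives the fold. The key observation is that the singular fold identifies vertices $v_1,v_2$ that are already equivalent and directions that are already in the same gate, so the combinatorics of the specialization sequence transports verbatim along $g$. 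Once this compatibility is established, together with $i(\tau)=i(\tau')$, we conclude that an admissible carrier is ideal for $\tau$ if and only if it is ideal for $\tau'$, giving $\calp(\tau)=\calp(\tau')$.
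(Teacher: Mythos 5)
Your argument is correct and follows essentially the same route as the paper's proof: admissibility and index preservation via the fact that the fold map $g$ sends legal turns to legal turns and changes neither the orbits of equivalence classes, their stabilizers, nor the gate counts; then both inclusions of $\calp(\tau)$ and $\calp(\tau')$ by factoring the carrying map through $g$ (resp.\ precomposing with $g$) and transporting the same sequence of specializations. The paper is equally brief about the compatibility of the specialization sequences, so the point you flag as the ``main technical obstacle'' is left at the same level of detail there.
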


\begin{proof}
As $\tau$ carries a tree with trivial arc stabilizers, following Remark~\ref{r:singular_exists} there exists a unique train track $\tau'$ obtained by a singular fold at $\{e_1,e_2\}$. Let $g:S^{\tau} \to S^{\tau'}$ be the folding map given in Definition~\ref{d:singular}. The numbers and the stabilizers of equivalence classes of vertices and gates are unchanged under $g$, so that $i(\tau')=i(\tau)$. To check admissibility of $\tau'$, let $v'$ be a vertex in $S^{\tau'}$. Then there exists $v\in V(S^\tau)$ such that $v'=g(v)$, and the $g$-image of any tripod of legal axes at $v$ is a tripod of legal axes at $v'$ (because $g$ sends legal turns to legal turns). 

We finally show that $\calp(\tau)=\calp(\tau')$. Let $T\in\calp(\tau)$, with carrying map $f:S^\tau \to T$. Since the extremities of $e_1$ and $e_2$ are equivalent, these two edges are mapped to the same segment in $T$. It follows that the carrying map $f$ factors through the fold $g$ to attain a map $f':S^{\tau'} \to T$, and one can check that the train-track induced by $f'$ is obtained from $\tau'$ by a finite (possibly trivial) sequence of specializations (using the same sequence of specializations as when passing from $\tau$ to $\tau_f$). So $T$ is carried by $\tau'$, and equality of the indices of $\tau$ and $\tau'$ shows that $\tau'$ is also an ideal carrier of $T$. It follows that $\calp(\tau) \subseteq \calp(\tau')$. Conversely, if $\tau'$ is an ideal carrier of $T$ with carrying map $f'$ then the composition $f=f' \circ g$ is such that $\tau_f$ is obtained from $\tau$ by a finite (possibly trivial) sequence of specializations, and it follows that $\calp(\tau') \subseteq \calp(\tau)$.
\end{proof}

\begin{de}[\emph{\textbf{Partial fold, see Figure \ref{fig-partial}}}]\label{de-partial}
Let $\tau$ be a train-track, and let $\{e_1,e_2\}$ be an illegal turn at a vertex $v\in S^\tau$. A train-track $\tau'$ is obtained from $\tau$ by a \emph{partial fold} at $\{e_1,e_2\}$ if 
\begin{itemize}
\item there is a map $g:S^{\tau}\to S^{\tau'}$ that consists in equivariantly identifying a proper initial segment $[v,v'_1]$ of $e_1$ with a proper initial segment $[v,v'_2]$ of $e_2$, so that the vertex $v'=g(v'_1)=g(v'_2)$ is trivalent in $S^{\tau'}$ (all vertices of $S^{\tau'}$ that are not in the $F_N$-orbit of $v'$ have a unique $g$-preimage in $S^{\tau}$),  
\item for all $v,w\in V(S^\tau)$, we have $g(v)\sim_V^{\tau'} g(w)$ if and only if $v\sim_V^\tau w$, 
\item for all directions $d,d'$ based at vertices of $S^\tau$, we have $g(d')\sim^{\tau'} g(d)$ if and only if $d'\sim^{\tau}d$,
\item the vertex $v'$ is not equivalent to any other vertex in $\tau'$, and all directions at $v'$ in $S^{\tau'}$ are pairwise inequivalent.
\end{itemize}
\end{de}

\begin{rk}
The last condition in the definition implies that the new vertex $v'$ is exceptional in $\tau'$. As with singular folds, such a partial fold $\tau'$ at $\{e_1,e_2\}$ exists and is unique as long as partially folding $S^\tau$ along $e_1$ and $e_2$ gives a tree with trivial arc stabilizers, which will be the case when $\calp(\tau)$ is nonempty.  \end{rk}

\begin{figure}
\begin{center}
\input{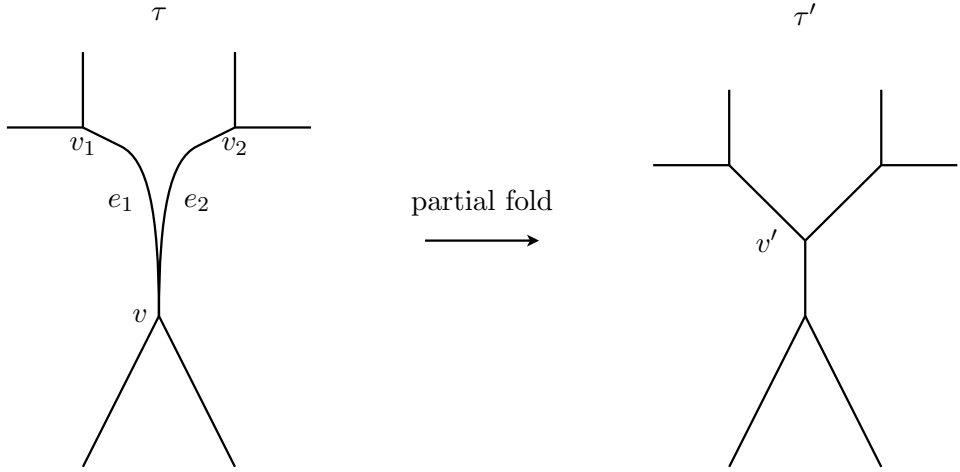}
\caption{A partial fold.}
\label{fig-partial}
\end{center}
\end{figure}

\begin{lemma}\label{lem_partial}
Let $\tau$ be an admissible train-track, and let $\tau'$ be a train-track obtained from $\tau$ by a partial fold.
\\ Then $\tau'$ is admissible, and $i(\tau')=i(\tau)$.
\end{lemma}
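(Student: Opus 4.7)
The plan is to verify admissibility of $\tau'$ and the equality $i(\tau')=i(\tau)$ separately, by tracking how the fold map $g: S^\tau \to S^{\tau'}$ of Definition \ref{de-partial} transports the train-track structure. For any vertex $w \in V(S^{\tau'})$ outside the $F_N$-orbit of the new trivalent vertex $v'$, the second and third conditions of Definition \ref{de-partial} imply that the equivalence class of $w$ corresponds bijectively via $g$ to an equivalence class in $\tau$, with the same stabilizer and the same partition into gates; in particular $g$ sends legal turns to legal turns. Consequently, a tripod of legal axes at $g^{-1}(w)$ in $\tau$ descends directly to one at $w$ in $\tau'$, and admissibility at $w$ is inherited from admissibility of $\tau$.

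The substantive case is the new vertex $v'$, whose three directions $d_v, d_1, d_2$ (toward $v$, $v_1$, $v_2$ respectively) are pairwise inequivalent by the last clause of Definition \ref{de-partial}. For the turns $(d_v, d_1)$ and $(d_v, d_2)$ at $v'$ one applies Lemma \ref{recurrent} to $\tau$ to obtain legal axes in $S^\tau$ containing the single edges $e_1$ and $e_2$ respectively (a single edge crosses no turns, hence is trivially a legal segment); read in $S^{\tau'}$ these axes cross $v'$ via the required turns and remain legal, since $g$ preserves inequivalence of gates. For the remaining turn $(d_1, d_2)$ one cannot simply lift an axis from $\tau$: the corresponding path $v_1 \to v \to v_2$ in $\tau$ crosses the \emph{illegal} turn $(e_1, e_2)$ at $v$. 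Instead, take hyperbolic elements $g_1, g_2 \in F_N$ with legal axes $A_1, A_2$ in $S^\tau$ containing $e_1, e_2$ respectively; the axes $A_1$ and $A_2$ share at least the folded edge $[v, v']$ in $S^{\tau'}$. By the standard theory of products of hyperbolic tree isometries (see \cite{CM}), a suitable word in $g_1, g_2$ is hyperbolic in $S^{\tau'}$ with axis passing through $v'$ via the legal turn $(d_1, d_2)$, and this axis is legal because it is assembled from legal pieces of $A_1$ and $A_2$ joined at the legal turn at $v'$. The main obstacle is precisely this construction, since the target turn $(d_1, d_2)$ has no legal analogue in $\tau$ itself and must be synthesized from two separate legal axes in $\tau$.

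For the index equality, first note $\mathcal{A}(\tau')=\mathcal{A}(\tau)$ since $v'$ has trivial stabilizer and all other vertex stabilizers are preserved by $g$, so $h(\mathcal{A}(\tau'))=h(\mathcal{A}(\tau))$. For the geometric index, the new equivalence class $\{v'\}$ is exceptional (three gates, trivial stabilizer), hence contributes $0$. Every other equivalence class in $\tau'$ corresponds bijectively via $g$ to one in $\tau$ with the same stabilizer and the same number of gates: at the vertex $v$ specifically, the two $\sim^\tau$-equivalent directions $e_1, e_2$ merge into the single new direction $d'_v$ toward $v'$, and by the third condition of Definition \ref{de-partial}, $d'_v$ belongs to the same $\sim^{\tau'}$-class as the remaining directions of the former $[e_1]=[e_2]$ gate, so the count of gates at $v$ is unchanged. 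Summing over orbits of equivalence classes, $i_{geom}(\tau')=i_{geom}(\tau)$, and hence $i(\tau')=i(\tau)$.
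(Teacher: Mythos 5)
Your index computation and the treatment of admissibility at the old vertices agree with the paper's proof, and your argument for the two turns $(d_v,d_1)$, $(d_v,d_2)$ at the new vertex $v'$ (push forward a single legal axis of $\tau$ containing $e_1$, resp.\ $e_2$) is a valid, slightly more economical variant of what the paper does. The problem is the turn $(d_1,d_2)$, which you yourself identify as the crux and then dispose of with ``a suitable word in $g_1,g_2$.'' As set up, this does not follow from the standard product lemmas. Your axes $A_1,A_2$ both pass through $v'$ and overlap in a segment having $v'$ as an endpoint. If $g_1,g_2$ are oriented coherently on the overlap, the axis of $g_1g_2$ is the standard ``enter along one axis, leave along the other'' path: at $v'$ it comes in from the direction $d_v$ and exits via $d_1$ (or $d_2$), so it crosses a turn you have already handled, not $(d_1,d_2)$. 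If you reverse one orientation to force the axis to switch from $d_2$ to $d_1$ at $v'$, the overlap becomes incoherently oriented, and then the product can be elliptic (take $g_2=g_1^{-1}$ as the extreme case), or hyperbolic with an axis whose position depends on comparing the translation lengths with the length of the overlap $A_1\cap A_2$ --- a quantity you have no control over without, say, replacing $g_1,g_2$ by large powers. None of this is carried out, so the key legal turn is never actually produced.

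The paper avoids exactly this difficulty by arranging the easy case of the product lemma: it chooses legal elements $h,h'$ whose axes pass through $g(v_1)$ and $g(v_2)$ respectively but \emph{avoid} $v'$ (possible because admissibility of $\tau$ supplies a legal turn at $v_1$, resp.\ $v_2$, not involving the direction toward $v$). The two axes are then disjoint near $v'$, the bridge between them runs through $v'$ crossing precisely the turn $(d_1,d_2)$, and the argument of Lemma~\ref{recurrent} shows that $hh'$ is hyperbolic with legal axis containing that bridge. Your proof would be repaired either by adopting this configuration, or by explicitly choosing orientations and powers so that the incoherent-overlap case of the Culler--Morgan analysis applies and verifying where the resulting axis sits; as written, the step is asserted rather than proved.
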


\begin{proof}
The condition from the definition of admissibility is easy to check at any vertex in $S^{\tau'}$ not in the orbit of $v'$ (with the notations from Definition \ref{de-partial}). Let $d_0,d_1,d_2$ be the three directions at $v'$, containing the $g$-images of $v=v_0,v_1,v_2$, respectively. 
Let $i\in\{0,1,2\}$ (considered modulo $3$). To construct a legal element whose axis crosses the turn $\{d_i,d_{i+1}\}$, we argue as in the proof of Lemma~\ref{recurrent}: one first finds two elements $h,h'$ that are legal in both $\tau$ and $\tau'$ such that \begin{itemize} \item the axes $A_h,A_{h'}$ in $S^{\tau'}$ go through $g(v_i)$ and $g(v_{i+1})$ respectively, \item the axes $A_h$ and $A_{h'}$ do not go through $v'$, \item the subtree $Y$ spanned by $A_h$ and $A_{h'}$ (which contains the turn $\{d_i,d_{i+1}\}$) is legal. \end{itemize} The element $hh'$ is then a legal element that crosses the turn $\{d_i,d_{i+1}\}$, as required.   

To see that $i(\tau')=i(\tau)$, we first note that $\cala(\tau')=\cala(\tau)$ because the stabilizer of any equivalence class corresponding to a new vertex is trivial, and the stabilizers of the other equivalence classes of vertices are unchanged when passing from $\tau$ to $\tau'$. In addition, under the folding process the number of nonexceptional equivalence classes of vertices, along with the ranks of their stabilizers and the number of associated gates, remains the same, and the new exceptional trivalent vertex contributes $0$ to the index. Hence $i_{geom}(\tau')=i_{geom}(\tau)$, and therefore $i(\tau')=i(\tau)$ also. 
\end{proof}

\begin{lemma}\label{fold-p}
Let $\tau$ be an admissible train-track, and let $T\in\calp(\tau)$ with carrying map $f:S^\tau\to T$. Assume that there exist two edges $e_1,e_2$ in $S^\tau$ that form an illegal turn, such that $|f(e_1)\cap f(e_2)|$ is smaller than both $|f(e_1)|$ and $|f(e_2)|$. 
\\ Then there exists a train-track $\tau'$ obtained from $\tau$ by applying a partial fold at $\{e_1,e_2\}$ such that $T\in\calp(\tau')$. 
\end{lemma}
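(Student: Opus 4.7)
The plan is to construct explicitly a carrying map $f' : S^{\tau'} \to T$; once this is done, Lemma~\ref{lem_partial} gives $i(\tau') = i(\tau) = i(T)$, so $\tau'$ is automatically an ideal carrier of $T$ and $T \in \calp(\tau')$. Let $f : S^\tau \to T$ be the unique carrying map provided by Lemma~\ref{carrier-unique}. Because $\{e_1, e_2\}$ is an illegal turn in $\tau$, the two edges are $\sim^\tau$-equivalent and, since $\tau_f$ is a specialization of $\tau$, they remain equivalent in $\tau_f$, so the germs of $f(e_1)$ and $f(e_2)$ coincide at $f(v)$. Let $w \in T$ be the endpoint of the common initial segment $f(e_1) \cap f(e_2)$; by hypothesis $w$ is a proper interior point of both $f(e_1)$ and $f(e_2)$.

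Let $g : S^\tau \to S^{\tau'}$ be the folding map of Definition~\ref{de-partial}. Define $f' : S^{\tau'} \to T$ to be the unique $F_N$-equivariant map, linear on edges, with $f'(v') := w$ and $f'(g(u)) := f(u)$ for every vertex $u$ of $S^\tau$. Then $f = f' \circ g$ by construction. The hypothesis $0 < |f(e_1) \cap f(e_2)| < \min(|f(e_1)|, |f(e_2)|)$ ensures that each of the three edges incident to $v'$ in $S^{\tau'}$ maps under $f'$ to a segment of positive length, and on edges outside the folded orbit the map $g$ is an isomorphism so $f'$ inherits non-collapsing from $f$. Hence $f'$ is linear on edges and collapses no edge.

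The remaining task is to verify that the induced train-track $\tau_{f'}$ is obtained from $\tau'$ by a finite (possibly trivial) sequence of specializations. Away from the orbit of $v'$, the bijection $g$ together with the factorization $f = f' \circ g$ translates the specialization sequence taking $\tau$ to $\tau_f$ into a parallel sequence taking $\tau'$ to the matching portion of $\tau_{f'}$. At the new vertex $v'$, the three directions map under $f'$ to the three directions at $w$ towards $f(v)$, $f(v_1)$ and $f(v_2)$; these are pairwise distinct precisely because $w$ is the first point of divergence of $f(e_1)$ and $f(e_2)$, so the three gates at $v'$ in $\tau'$ survive as distinct gates in $\tau_{f'}$. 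Any further orbits of vertices in $S^{\tau'}$ that $f'$ sends to $w$ must be merged into $[v']$ in $\tau_{f'}$; such merges are realized as specializations at the exceptional class $[v']$. The main obstacle is verifying that this last step genuinely fits the template of Definition~\ref{de-specialization}, and in particular its fourth bullet, which requires that each direction at $v'$ already be equivalent to some direction at the vertex being merged in; this follows from $f = f' \circ g$ and the corresponding condition inherited from the specialization sequence from $\tau$ to $\tau_f$, which already witnessed such direction identifications at any vertex of $S^\tau$ whose $f$-image equals $w$.
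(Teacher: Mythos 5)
Your construction of $f'$ and the factorization $f = f'\circ g$ match the paper's own proof, and you correctly isolate the crucial step: showing that $\tau_{f'}$ is obtained from $\tau'$ by specializations in the case where $f'$ sends the new trivalent vertex $v'$ to the $f$-image of some other vertex $v_0$ of $S^\tau$. But your justification of that step is where the gap lies. You claim that the fourth bullet of Definition~\ref{de-specialization} (every direction at $v'$ must be $f'$-identified with a direction at a vertex equivalent to $v_0$) ``follows from $f=f'\circ g$ and the corresponding condition inherited from the specialization sequence from $\tau$ to $\tau_f$, which already witnessed such direction identifications at any vertex of $S^\tau$ whose $f$-image equals $w$.'' This cannot work: $v'$ is a genuinely new vertex, its three directions are based at the interior points $v'_1\in e_1$ and $v'_2\in e_2$, and nothing in the passage from $\tau$ to $\tau_f$ (which only merges pre-existing exceptional classes of $S^\tau$) says anything about the germs of $f$ at these interior points. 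A priori one of the three germs at $w$ coming from $f(e_1)\cup f(e_2)$ could fail to be the image of any direction based at a vertex $\tau$-equivalent to $v_0$, in which case merging $[v']$ into $[v_0]$ is not a specialization at all.

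The missing ingredient is the index maximality of $\tau$: since $T\in\calp(\tau)$, the track $\tau$ is an ideal carrier of $T$, so no admissible track carrying $T$ can have strictly larger index. The paper's argument runs: if some direction at $v'$ were not identified by $f'$ with a direction at a vertex equivalent to $v_0$, the merged class in $\tau_{f'}$ would acquire an extra gate, forcing $i_{geom}(\tau_{f'})>i_{geom}(\tau)$ and hence $i(T)>i(\tau)$, a contradiction. The same mechanism is needed (and you omit it) to rule out the possibility that $v'$ is identified only with vertices in its own $F_N$-orbit --- in which case no specialization is available, since Definition~\ref{de-specialization} requires the merged vertex to lie outside the orbit of $v_0$; there the contradiction is that $[v']$ would acquire a nontrivial stabilizer, again raising the geometric index. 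A smaller omission: the paper also verifies that the folded vertex really is trivalent (i.e.\ that the equivariant identification of the two initial segments of length $|f(e_1)\cap f(e_2)|$ is a partial fold in the sense of Definition~\ref{de-partial}), again via maximality of $\cala(\tau)$; you take this for granted.
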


\begin{proof}
Recall that $S^{\tau}$ is equipped with the metric induced by the map $f:S^\tau \to T$, so that any edge $e$ of $S^{\tau}$ has length $|f(e)|$. Let $S'$ be the simplicial tree obtained from $S^{\tau}$ by equivariantly identifying a proper initial segment $[v,v'_1]$ of $e_1$ of length $|f(e_1)\cap f(e_2)|$ with a proper initial segment $[v,v'_2]$ of $e_2$ of the same length. Let $g:S^\tau\to S'$ be the folding map. We first claim that the vertex $v':=g(v'_1)=g(v'_2)$ is trivalent in $S'$. Indeed, otherwise, the vertex $v'$ would have infinite stabilizer, so $\cala(\tau)$ would not be a maximal free factor system elliptic in $T$, a contradiction. We thus have $S'=S^{\tau'}$. The map $f$ factors through a map $f':S^{\tau'}\to T$. Since $i(\tau')=i(\tau)$, it is enough to prove that the train-track $\tau_{f'}$ is obtained from $\tau'$ by a finite sequence of specializations. 

By definition, the train-track $\tau_f$ is obtained from $\tau$ by a finite sequence of specializations. Let $(\tau_f)'$ be the partial fold of $\tau_f$ at $\{e_1,e_2\}$, and notice that $(\tau_f)'$ is obtained from $\tau'$ by the same sequence of specializations as when passing from $\tau$ to $\tau_f$.

If the $f'$-image of the new trivalent vertex $v'$ is not equal to the $f$-image of any other vertex in $S^\tau$, then $\tau_{f'}=(\tau_f)'$ and we are done. Otherwise, we will show that $\tau_{f'}$ is a specialization of $(\tau_f)'$, from which the lemma will follow. Indeed, if $f'(v')$ is equal to the $f$-image of another vertex $v_0\in S^\tau$, we first observe that we can always find such a $v_0$ which is not in the orbit of $v'$. This is because if $v'$ is only identified with vertices in its own orbit, then the equivalence class of $v'$ in $\tau_{f'}$ has nontrivial stabilizer in $\tau_{f'}$, and contributes positively to the geometric index of $\tau_{f'}$. Then $i(\tau_{f'})>i(\tau)$, contradicting the fact that $\tau$ is an ideal carrier. So we can assume that $v_0$ and $v'$ are not in the same orbit. Next, we observe that all germs of directions at $v'$ are identified by $f'$ with germs of directions at vertices that are $\tau$-equivalent to $v_0$, as otherwise we would be creating a new gate when passing from $\tau$ to $\tau_{f'}$, again increasing the index. So $\tau_{f'}$ is a specialization of $(\tau_f)'$.
\end{proof}

\begin{de}[\emph{\textbf{Full fold, see Figure \ref{fig-full}}}]\label{de-full}
Let $\tau$ be a train-track, and let $\{e_1,e_2\}$ be an illegal turn at a vertex $v\in S^\tau$. Denote by $v_1,v_2$ the other extremities of $e_1,e_2$. Let $d_0$ be a direction in $\tau$ based at a vertex $\tau$-equivalent to $v_1$. A train-track $\tau'$ is obtained from $\tau$ by a \emph{full fold of $e_1$ into $e_2$ with special gate $[d_0]$} if 
\begin{itemize}
\item there is an $F_N$-equivariant map $g:S^{\tau}\to S^{\tau'}$ that consists in equivariantly identifying $[v,v_1]$ with a proper initial segment $[v,v'_2]\subsetneq[v,v_2]$,
\item for all vertices $v,v'\in V(S^\tau)$, we have $g(v')\sim^{\tau'}g(v)$ if and only if $v'\sim^{\tau}v$,
\item for all directions $d,d'$ based at vertices of $S^\tau$, we have $g(d')\sim^{\tau'}g(d)$ if and only if $d'\sim^{\tau}d$,
\item if $d$ is the direction in $S^{\tau'}$ based at $g(v_1)=g(v'_2)$ and pointing towards $g(v_2)$, then $d\sim g(d_0)$.
\end{itemize}
\end{de}

\begin{figure}
\begin{center}
\input{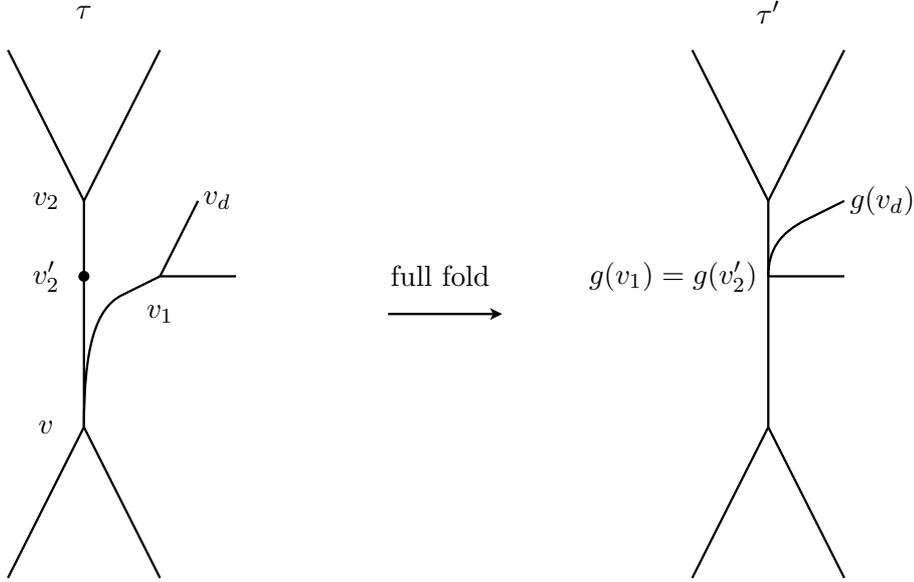}
\caption{A full fold. The new direction at $g(v_1)=g(v'_2)$ has been identified with the $g$-image of the direction $d_0$ at $v_1$ with initial edge $[v_1,v_d]$ (in general, the direction $d_0$ may also be based at a vertex $v'_1\neq v_1$ as long as $v_1$ and $v'_1$ are $\tau$-equivalent).}
\label{fig-full}
\end{center}
\end{figure}

The number of equivalence classes of vertices remains unchanged under a full fold, and the vertex stabilizers in $S^{\tau'}$ are the same as the vertex stabilizers in $S^\tau$. The fold creates no new gates, so the index remains unchanged. If $\tau$ is admissible then $\tau'$ is also admissible. Hence:

\begin{lemma}\label{lem_full}
Let $\tau$ be an admissible train-track, and let $\tau'$ be a train-track obtained from $\tau$ by a full fold.
\\ Then $\tau'$ is admissible and $i(\tau')=i(\tau)$.
\qed
\end{lemma}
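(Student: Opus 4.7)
The plan is to read off directly from Definition~\ref{de-full} that the full fold $g\colon S^\tau\to S^{\tau'}$ preserves every piece of combinatorial data entering the index, and that admissibility survives. First, by bullet~(2) of Definition~\ref{de-full} together with $F_N$-equivariance of $g$, the map $g$ descends to a bijection between $F_N$-orbits of $\sim_V^\tau$-classes in $V(S^\tau)$ and $\sim_V^{\tau'}$-classes in $V(S^{\tau'})$, with the stabilizers of corresponding classes coinciding. In particular $\mathcal{A}(\tau')=\mathcal{A}(\tau)$, so $h(\mathcal{A}(\tau'))=h(\mathcal{A}(\tau))$, and the stabilizer ranks $r_i$ appearing in $i_{geom}$ are unchanged.

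Next I would check that the number of gates at each equivalence class of vertices is preserved. On a class avoiding $g(v)$, $g(v_1)=g(v'_2)$, and $g(v_2)$, the set of directions and its equivalence relation are carried over unchanged by $g$. At $g(v)$, the two directions determined by $e_1$ and $e_2$ become one, but since $\{e_1,e_2\}$ was illegal they already lay in a common gate of $\tau$, so no gate is lost. At $g(v_1)=g(v'_2)$, exactly one new direction appears (the one pointing toward $g(v_2)$), and bullet~(4) of Definition~\ref{de-full} places it in the gate of $g(d_0)$, so no gate is gained. At $g(v_2)$, the subdivision point $v'_2$ contributes nothing to the directions at $g(v_2)$. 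Combined with the previous paragraph this yields $i_{geom}(\tau')=i_{geom}(\tau)$, and hence $i(\tau')=i(\tau)$.

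For admissibility, I would take any $v'\in V(S^{\tau'})$, lift it to $\tilde v\in V(S^\tau)$ under $g$ (choosing $\tilde v=v_1$ in the ambiguous case $v'=g(v_1)=g(v'_2)$), and push forward a tripod $(g_1,g_2,g_3)$ of legal elements at $\tilde v$ supplied by admissibility of $\tau$. Each $g_i$ remains hyperbolic in $S^{\tau'}$ since $g$ collapses no edge; bullet~(3) of Definition~\ref{de-full} guarantees that $g$ sends legal turns to legal turns within each equivalence class, so the pushed-forward axes remain legal; and the three initial directions at $\tilde v$ map to three pairwise inequivalent directions at $v'$, again by bullet~(3). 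The one subtle point is the new direction created at $g(v_1)$: it is precisely the role of bullet~(4) of Definition~\ref{de-full} to force it into an existing gate rather than a new one, which is what keeps the index from increasing and is the only place where the definition of full fold (as opposed to an unconstrained identification) does genuine work.
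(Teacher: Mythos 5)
Your proposal is correct and follows the same route as the paper, which in fact only asserts these facts (the lemma carries an immediate \qed after a two-sentence justification): your orbit-by-orbit bookkeeping of vertex classes, stabilizers and gates, and the observation that bullet (4) of Definition~\ref{de-full} is precisely what prevents a new gate from appearing at $g(v_1)=g(v'_2)$, is the intended argument. The one point you elide is that a pushed-forward legal axis running straight through the interior of $e_2$ crosses, at the image of the subdivision point $v'_2$, the turn between the backward direction $g(d_1)$ along $e_1$ and the new direction $d\sim g(d_0)$, whose legality requires $d_0\not\sim d_1$ --- automatic in every application of the full fold via the carrying map, and equally unaddressed in the paper.
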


\begin{lemma}\label{fold-f-1}
Let $\tau$ be an admissible train-track, and let $T\in\calp(\tau)$ with carrying map $f:S^\tau\to T$. Let $e_1=[v,v_1]$ and $e_2=[v,v_2]$ be two edges in $S^\tau$ that form an illegal turn. Assume that $f(e_1)\subsetneq f(e_2)$. Also assume that there is a direction $d_0$ (whose initial edge we denote by $[v'_1,v_d]$) at a vertex $v'_1 \sim^\tau v_1$ such that 
\begin{equation}\label{eq<}
d_T(f(v_d),f(v_2))<d_T(f(v_d),f(v'_1))+d_T(f(v'_1),f(v_2)).
\end{equation}
 \\ Then there exists a train-track $\tau'$ obtained from $\tau$ by fully folding $e_1$ into $e_2$, with special gate $[d_0]$ such that $T \in \calp(\tau')$.
\end{lemma}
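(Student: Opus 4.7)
The plan is to construct an explicit carrying map $f' : S^{\tau'} \to T$ descending from $f$ through the folding map $g : S^\tau \to S^{\tau'}$, and then to show that $\tau'$ is an ideal carrier of $T$ by invoking Lemma~\ref{lem_full} for the equality $i(\tau') = i(\tau) = i(T)$. Since $\{e_1,e_2\}$ is illegal, the germs at $f(v)$ of $f(e_1)$ and $f(e_2)$ coincide, and the assumption $f(e_1) \subsetneq f(e_2)$ forces $f(e_1)$ to be the initial subsegment of $f(e_2)$ (starting at $f(v)$) of length $|f(e_1)|$. Thus $f(v_1) = f(v'_2)$, so $f$ factors as $f = f' \circ g$ with $f' : S^{\tau'} \to T$ an $F_N$-equivariant map, linear on edges, collapsing no edge. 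In the $\mathbb{R}$-tree $T$, the strict triangle inequality (\ref{eq<}) says that $f(v'_1) = f(v_1)$ does not lie on the geodesic from $f(v_d)$ to $f(v_2)$, which is equivalent to the germs at $f(v'_1)$ toward $f(v_d)$ and toward $f(v_2)$ being identical. The latter is the $f'$-image of the new direction at $g(v_1) = g(v'_2)$ created by the fold, while the former is the $f'$-image of the gate $[g(d_0)]$; this is exactly the identification imposed by the full fold with special gate $[d_0]$, so every vertex and gate equivalence of $\tau'$ is realized in $\tau_{f'}$.

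Next I would verify that $\tau_{f'}$ is obtained from $\tau'$ by a finite (possibly trivial) sequence of specializations, following the strategy of the proof of Lemma~\ref{fold-p}. Let $(\tau_f)'$ denote the result of applying the same full fold to $\tau_f$, using the (possibly enlarged) special gate $[d_0]_{\tau_f}$; the fold data remain valid because the specializations turning $\tau$ into $\tau_f$ preserve the equivalence $v_1 \sim v'_1$ and only enlarge the gate containing $d_0$. One checks that $(\tau_f)'$ is obtained from $\tau'$ by the same sequence of specializations as $\tau_f$ from $\tau$, since these specializations act on exceptional vertex-orbits compatible with the fold data. Finally, either $\tau_{f'} = (\tau_f)'$, or $\tau_{f'}$ includes further identifications of vertex classes or gates; in the latter case, these extra identifications must again be specializations, since otherwise $i_{geom}(\tau_{f'}) > i_{geom}(\tau) = i(T)$, contradicting the fact that $\tau$ is an ideal carrier of $T$.

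Combining this with Lemma~\ref{lem_full}, which gives $i(\tau') = i(\tau) = i(T)$, we obtain that $\tau'$ is an admissible ideal carrier of $T$, so $T \in \calp(\tau')$. The main obstacle I anticipate is the bookkeeping at the end of the second step: any extra identifications present in $\tau_{f'}$ beyond those in $(\tau_f)'$ must be shown to fit the specialization pattern, and this forces one to exploit the maximal-index assumption on $\tau$ in precisely the same manner as in the proof of Lemma~\ref{fold-p}.
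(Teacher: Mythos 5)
Your proposal is correct and follows essentially the same route as the paper: factor $f$ through the folding map $g$ to get $f'$, use the strict triangle inequality to see that $f'$ identifies the germ of the new direction at $g(v_1)=g(v'_2)$ with the germ of $g(d_0)$, observe that $\tau_{f'}$ is obtained from $\tau'$ by the same specializations as $\tau_f$ from $\tau$, and conclude via $i(\tau')=i(\tau)$ from Lemma~\ref{lem_full}. Your extra index-based bookkeeping in the second paragraph is harmless but not needed here: unlike the partial-fold case, a full fold creates no new vertex orbit of $S^{\tau}$ (only the point $v'_2$ is promoted to a vertex and immediately identified with $v_1$), so the identifications made by $f'$ are exactly the $g$-images of those made by $f$ plus the one forced by \eqref{eq<}.
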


\begin{proof}
As in the previous cases, existence of $\tau'$ follows from the fact that $\tau$ carries a tree with trivial arc stabilizers. Since $f(e_1)\subsetneq f(e_2)$, the map $f$ factors through the fold $g:S^{\tau}\to S^{\tau'}$ to reach an $F_N$-equivariant map $f':S^{\tau'}\to T$, and Equation~\eqref{eq<} ensures that $f'$ identifies germs of $[g(v_1),g(v_2)]$ and of $g(d_0)$. So $\tau_{f'}$ is obtained from $\tau'$ by the same finite sequence of specializations as when passing from $\tau$ to $\tau_f$. We know in addition that $i(\tau')=i(\tau)$ (Lemma~\ref{lem_full}). This implies that $T\in\calp(\tau')$.
\end{proof}

\begin{cor}\label{fold-f}
Let $\tau$ be an admissible train-track, and let $T\in\calp(\tau)$, with carrying map $f:S^\tau\to T$. Let $e_1=[v,v_1]$ and $e_2=[v,v_2]$ be two edges in $S^\tau$ that form an illegal turn. Assume that $f(e_1)\subsetneq f(e_2)$. Also assume that $T$ is not carried by any specialization of $\tau$.
\\ Then there exists a train-track $\tau'$ obtained from $\tau$ by fully folding $e_1$ into $e_2$, such that $T\in\calp(\tau')$.
\end{cor}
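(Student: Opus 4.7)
The plan is to apply Lemma~\ref{fold-f-1} with a carefully chosen special gate $d_0$. Setting $x:=f(v_1)$, the hypothesis $f(e_1)\subsetneq f(e_2)$ produces a unique interior point $v'_2\in e_2$ with $f(v'_2)=x$. The inequality required by Lemma~\ref{fold-f-1} is the strict triangle inequality saying that $x=f(v'_1)$ does not lie on the geodesic between $f(v_d)$ and $f(v_2)$ in $T$; in a tree, this is equivalent to asking that the germ at $x$ of the $f$-image of $d_0$ coincides with the germ $\delta$ at $x$ of $f(e_2)$ pointing towards $f(v_2)$. So the task reduces to producing a direction $d_0$ at some $v'_1\sim^\tau v_1$ whose $f$-image germ at $x$ equals $\delta$.

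The first observation I would make is that the assumption that $T$ is not carried by any specialization of $\tau$ forces $\tau_f=\tau$: if $\tau_f$ were a nontrivial iterated specialization of $\tau$, the first step $\tau\to\tau_1$ of the sequence would itself be a nontrivial specialization that still carries $T$, contradicting the hypothesis. In particular, the vertices of $S^\tau$ mapped to $x$ by $f$ form exactly the class $[v_1]_\tau$, and the gates at $[v_1]_\tau$ in $\tau$ inject into the directions at $x$ in $T$ via their $f$-image germs.

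I would then argue by contradiction: suppose no direction in $\cald([v_1]_\tau)$ has $f$-image germ $\delta$. Pick any $d_0$ at some $v'_1\sim^\tau v_1$ and perform the full fold of $e_1$ into $e_2$ with special gate $[d_0]$ to produce a train-track $\tau'$, with folding map $g:S^\tau\to S^{\tau'}$. By Lemma~\ref{lem_full}, $\tau'$ is admissible and $i(\tau')=i(\tau)=i(T)$. Because $f(e_1)$ is an initial subsegment of $f(e_2)$, the map $f$ factors as $f=f'\circ g$ for a linear, non-collapsing, $F_N$-equivariant map $f':S^{\tau'}\to T$, which induces a train-track structure $\tau_{f'}$ on $S^{\tau'}$. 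At the class $[g(v_1)]$ of $\tau_{f'}$, the $g$-images of the gates at $[v_1]_\tau$ in $\tau$ give rise to distinct $\stab([v_1])$-orbits of gates in $\tau_{f'}$ (using $\tau_f=\tau$); and the new direction at $g(v_1)=g(v'_2)$ pointing towards $g(v_2)$ has $f'$-image germ $\delta$, which by the standing assumption is not realized by any $g$-image of a direction at $[v_1]_\tau$. So the new direction contributes a fresh $\stab([v_1])$-orbit of gates, giving $i_{geom}(\tau_{f'})>i_{geom}(\tau)$; combined with $\cala(\tau_{f'})=\cala(\tau)$, this yields $i(\tau_{f'})>i(T)$.

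It then remains to verify that $\tau_{f'}$ is admissible, so that $i(\tau_{f'})>i(T)$ contradicts the definition of $i(T)$ as the maximum over admissible carriers. Away from $[g(v_1)]$ the tracks $\tau_{f'}$ and $\tau'$ coincide, so admissibility is inherited there. At $[g(v_1)]$, the gate partition of $\tau_{f'}$ is a refinement of that of $\tau'$ (the single gate of $\tau'$ containing $g(d_0)$ and the new direction splits in two in $\tau_{f'}$), so every legal turn of $\tau'$ remains legal in $\tau_{f'}$, and any tripod of legal axes at $[g(v_1)]$ provided by admissibility of $\tau'$ remains a tripod of legal axes in $\tau_{f'}$. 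The main obstacle I anticipate is the equivariance bookkeeping needed to show that the new direction really constitutes a fresh $\stab([v_1])$-orbit of gates: this relies on the fact that $v'_2$ is an interior point of $e_2$ and that among the vertices of $S^\tau$ only $v_1$ maps to $g(v_1)$ under $g$, so no element of $\stab([v_1])$ can carry the $g$-image of an old direction at $[v_1]_\tau$ onto the new direction.
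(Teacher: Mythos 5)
Your proposal is correct and follows essentially the same route as the paper: factor $f$ through the fold, use the hypothesis on specializations to see that the vertices and gates at $f(v_1)$ are exactly those of the class $[v_1]_\tau$, and argue that if no existing gate at $[v_1]_\tau$ realizes the germ of $f(e_2)$ towards $f(v_2)$ then the folded track acquires a new gate and hence a strictly larger index, contradicting $T\in\calp(\tau)$; then conclude via Lemma~\ref{fold-f-1}. Your write-up is in fact somewhat more detailed than the paper's (notably the explicit check that the higher-index track $\tau_{f'}$ is admissible, which the paper leaves implicit), but the underlying argument is the same.
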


\begin{proof}
Let $S'$ be the underlying simplicial tree of any full fold of $\tau$. Since $f(e_1)\subsetneq f(e_2)$, the map $f$ factors through an $F_N$-equivariant map $f':S^{\tau'}\to T$. Equality in indices shows the existence of a direction $d_0$ in $\tau$ for which Equation~\eqref{eq<} from Lemma~\ref{fold-f-1} holds, as otherwise the fold would create a new gate, and $T$ would be carried by a train-track with higher index. The direction $d_0$ is based at a vertex $v'_1$ satisfying $f(v'_1)=f(v_1)$. Since $T$ is not carried by any specialization of $\tau$, this implies that $v'_1$ is $\tau$-equivalent to $v_1$. Therefore, Lemma~\ref{fold-f-1} implies that $T\in\calp(\tau')$, where $\tau'$ is the full fold of $\tau$ with special gate $[d_0]$.
\end{proof}

\subsubsection{Folding sequences: proof of Lemma~\ref{fold-sequence}}

Given two train-tracks $\tau$ and $\tau'$, we say that $\tau'$ is obtained from $\tau$ by \emph{applying a folding move} if $\tau'$ is obtained from $\tau$ by applying either a singular fold, a partial fold or a full fold. We then define a \emph{folding sequence of train-tracks} as in Definition~\ref{de-fsott}: recall that this is a sequence $(\tau_i)_{i\in\mathbb{N}}$ of train-tracks such that $\tau_{i+1}$ is obtained from $\tau_i$ by applying a folding move, followed by a finite (possibly trivial) sequence of specializations. The goal of the present section is to prove Lemma~\ref{fold-sequence}. We will actually prove a slightly stronger version of it, given in Lemma~\ref{strong} below. The main fact we will use in the proof is the following.

\begin{lemma}\label{fold-exist}
Let $\tau$ be an admissible train-track, and let $T\in\mathcal{P}(\tau)$.
\\ Then there exists a train-track $\tau'\neq\tau$ obtained either by a specialization, or by applying a folding move, such that $T\in\mathcal{P}(\tau')$. 
\end{lemma}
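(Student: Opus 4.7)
The plan is to analyze the carrying map $f:S^\tau\to T$ supplied by Lemma~\ref{carrier-unique} and dichotomize on whether the induced train-track $\tau_f$ equals $\tau$ or is obtained from $\tau$ by a nonempty sequence of specializations.

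In the first case, $\tau_f\neq\tau$, so the sequence of specializations taking $\tau$ to $\tau_f$ is nonempty. Let $\tau'$ be the first specialization in this sequence; then $\tau_f$ is reached from $\tau'$ by the remaining (shorter) sequence of specializations, so $T$ is carried by $\tau'$. Lemma~\ref{lem_spec} gives $i(\tau')=i(\tau)=i(T)$, so $\tau'$ is an ideal carrier of $T$, i.e., $T\in\calp(\tau')$. By the definition of specialization $\tau'\neq\tau$, and this case is settled.

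Otherwise $\tau_f=\tau$; equivalently, no nontrivial specialization of $\tau$ carries $T$. The key observation is that $f$ cannot be an isometric embedding: if it were, $f(S^\tau)$ would be an $F_N$-invariant simplicial subtree of $T$, and by minimality of $T$ we would have $T=f(S^\tau)$, contradicting the dense orbits condition built into the definition of $\calp(\tau)$. Since $f$ is linear on edges and collapses no edge, the only way it can fail to be an isometric embedding is to fold at some vertex: there exist edges $e_1=[v,v_1]$ and $e_2=[v,v_2]$ incident to a common vertex $v\in V(S^\tau)$ with $f(e_1)\cap f(e_2)\supsetneq\{f(v)\}$, furnishing an illegal turn in $\tau=\tau_f$. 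I then split on the length of the overlap. If $|f(e_1)\cap f(e_2)|$ is strictly smaller than both $|f(e_1)|$ and $|f(e_2)|$, Lemma~\ref{fold-p} yields a partial fold $\tau'$ with $T\in\calp(\tau')$. If, after swapping indices if necessary, $f(e_1)\subsetneq f(e_2)$, the ``no nontrivial specialization carries $T$'' hypothesis feeds into Corollary~\ref{fold-f} to produce a full fold $\tau'$ with $T\in\calp(\tau')$. If instead $f(e_1)=f(e_2)$, then $f(v_1)=f(v_2)$ and the germs of $e_1,e_2$ at $v_1,v_2$ have identical $f$-image; since $\tau_f=\tau$ this forces $v_1\sim^\tau_V v_2$ together with the direction-equivalence at $v_1,v_2$ required by Lemma~\ref{lem_singular}, which supplies a singular fold with $\calp(\tau')=\calp(\tau)\ni T$. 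In every sub-case $\tau'\neq\tau$ because folding changes either the underlying simplicial tree or (for singular folds) the collection of $F_N$-orbits of edges.

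The main obstacle is the ``$\tau_f=\tau$'' case: one must exploit the dense-orbits hypothesis on $T$ to rule out $f$ being an isometric embedding, and then check that the ``equal image'' configuration automatically supplies both the vertex-equivalence $v_1\sim^\tau_V v_2$ and the direction-equivalence needed to invoke the singular-fold lemma. Once this is in place, the three geometric configurations of the illegal turn match the hypotheses of the three folding lemmas established earlier in the subsection.
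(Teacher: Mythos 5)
Your proposal is correct and follows essentially the same route as the paper: dispose of the case where $T$ is carried by a nontrivial specialization of $\tau$, then locate an illegal turn for the carrying map (which exists because $T$ has dense orbits, hence is not simplicial) and match the three possible overlap configurations of $f(e_1)$ and $f(e_2)$ to Lemma~\ref{lem_singular}, Lemma~\ref{fold-p} and Corollary~\ref{fold-f} respectively. Your phrasing of the first dichotomy via $\tau_f$ versus $\tau$ and your explicit justification that an illegal turn must exist are just slightly more detailed versions of the same steps in the paper's proof.
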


\begin{proof}
Assume that $T$ is not carried by any train-track $\tau'\neq\tau$ obtained from $\tau$ by performing a specialization. Let $f:S^\tau \to T$ be the carrying map, and let $e_1=[v,v_1]$ and $e_2=[v,v_2]$ be two edges that form an illegal turn in $\tau$: this exists because $T$ is not simplicial by definition of $\calp(\tau)$. If $f(v_1)=f(v_2)$, then $v_1\sim^{\tau}v_2$, as otherwise $T$ would be carried by a specialization of $\tau$. In this case $f$ identifies germs of the directions at $v_1$ and $v_2$ pointing towards $v$, so Lemma~\ref{lem_singular} implies that $T\in\calp(\tau')$, where $\tau'$ is the singular fold of $\tau$. If $f(v_1)\neq f(v_2)$, and if $|f(e_1)\cap f(e_2)|$ is smaller than both $|f(e_1)|$ and $|f(e_2)|$, then Lemma~\ref{fold-p} implies that $T$ is carried by the partial fold of $\tau$ at $\{e_1,e_2\}$. Finally, if $f(e_1)\subsetneq f(e_2)$ (or vice versa), then Corollary~\ref{fold-f} implies that $T$ is carried by a full fold of $\tau$.  
\end{proof}

We also make the following observation.

\begin{lemma}\label{subspace}
Let $\tau$ be an admissible train-track, and let $\tau'$ be a train-track obtained from $\tau$ by applying either a specialization or a folding move.
\\ Then $\calp(\tau')\subseteq\calp(\tau)$ (and hence $P(\tau')\subseteq P(\tau)$). 
\end{lemma}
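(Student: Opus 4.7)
The plan is to proceed by case analysis on the four types of moves linking $\tau$ and $\tau'$: a specialization, a singular fold, a partial fold, or a full fold. For each case, given $T\in\calp(\tau')$ with carrying map $f':S^{\tau'}\to T$, I will construct a carrying map $f:S^\tau\to T$ and use the corresponding index-preservation statement (Lemmas~\ref{lem_spec}, \ref{lem_singular}, \ref{lem_partial}, \ref{lem_full}) to conclude that $i(\tau)=i(\tau')=i(T)$, so that $\tau$ is again an ideal carrier and $T\in\calp(\tau)$. The inclusion $P(\tau')\subseteq P(\tau)$ then follows by the same construction applied to mixing representatives.

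The specialization case requires no new work: because $S^{\tau'}=S^\tau$, the map $f:=f'$ is already a carrying map for $\tau$, since prepending the specialization $\tau\to\tau'$ to any sequence of specializations witnessing $\tau_{f'}$ over $\tau'$ gives a sequence over $\tau$. The singular fold case is immediate from Lemma~\ref{lem_singular}, which already asserts the stronger equality $\calp(\tau)=\calp(\tau')$.

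For the partial and full fold cases, I would set $f:=f'\circ g$, where $g:S^\tau\to S^{\tau'}$ is the fold map, and equip $S^\tau$ with the metric making $f$ isometric on each edge (which, in the partial fold case, may pass through the new trivalent vertex $v'$). The map $f$ does not collapse edges because neither $g$ nor $f'$ does. The substantive step is to verify that $\tau_f$ is obtained from $\tau$ by a finite sequence of specializations. Vertex equivalences transfer cleanly because $g$ restricts to an injection $V(S^\tau)\hookrightarrow V(S^{\tau'})$: two vertices of $S^\tau$ are $f$-equivalent iff their $g$-images are $\tau_{f'}$-equivalent, and the specializations $\tau'\to\tau_{f'}$ translate along this injection to a specialization sequence on $\tau$. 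Direction equivalences require a parallel but more delicate analysis based on Definitions~\ref{de-partial} and~\ref{de-full}: at the vertex $v$ where the fold occurs, the two $\tau$-equivalent directions $e_1$ and $e_2$ are sent by $g$ to the single direction $[v,v']$ in $S^{\tau'}$, which forces no new identifications beyond those already recorded in $\tau$, while away from the fold region the direction relation is transported unchanged.

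The main obstacle lies in the partial fold case: the new exceptional trivalent vertex $v'$ has no counterpart in $V(S^\tau)$, so specializations of $\tau'$ involving $v'$ might \emph{a priori} produce identifications in $\tau_f$ that cannot be realized as specializations of $\tau$. The key structural input is that $v'$ carries exactly three gates, all pairwise $\tau'$-inequivalent, so any specialization merging the class of $v'$ with a class $[u_0]$ either acts trivially on $V(S^\tau)$-level equivalences (if it only identifies the $v'$-gates with directions at $u_0$'s class without forcing new coincidences on $V(S^\tau)$-directions) or corresponds via $g$ to a legitimate specialization of $\tau$ at some exceptional class. Once this bookkeeping is in place, combining it with $i(\tau)=i(\tau')$ from Lemma~\ref{lem_partial} (respectively Lemma~\ref{lem_full}) yields $T\in\calp(\tau)$, completing the proof.
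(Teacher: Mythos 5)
Your proposal is correct and follows essentially the same route as the paper: for a specialization one observes that a carrying map for $\tau'$ is already a carrying map for $\tau$ by prepending the specialization to the witnessing sequence, for a folding move one precomposes the carrying map with the fold map $g:S^\tau\to S^{\tau'}$, and in all cases the index-preservation lemmas give $i(\tau)=i(\tau')$ so that $\tau$ remains an ideal carrier. The paper states this in two sentences; your additional bookkeeping (injectivity of $g$ on $V(S^\tau)$ and the fact that specializations of $\tau'$ at the new trivalent class are invisible at the level of $V(S^\tau)$) is a correct elaboration of the same argument rather than a different one.
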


\begin{proof}
If $\tau'$ is obtained from $\tau$ by applying a specialization, then the conclusion follows from the definition of being carried, together with the fact that $i(\tau')=i(\tau)$ (Lemma~\ref{lem_spec}). If $\tau'$ is obtained from $\tau$ by applying a folding move, then the conclusion follows from the fact that $i(\tau')=i(\tau)$ (Lemmas~\ref{lem_singular},~\ref{lem_partial} and~\ref{lem_full}), together with the observation that if $g:S^\tau\to S^{\tau'}$ is the fold map, and $f:S^{\tau'}\to T$ is a carrying map, then $f\circ g:S^\tau\to T$ is also a carrying map.
\end{proof}

We are now in position to prove the following stronger version of Lemma~\ref{fold-sequence}.

\begin{lemma}\label{strong}
Let $\xi\in\partial_\infty FZ_N$. Let $\tau_0,\dots,\tau_k$ be a finite sequence of admissible train-tracks such that $\xi\in P(\tau_k)$, and for each $i\in\{0,\dots,k-1\}$, the train-track $\tau_{i+1}$ is obtained from $\tau_i$ by applying a folding move followed by a finite (possibly trivial) sequence of specializations.
\\ Then there exists a folding sequence $(\tau'_i)_{i\in\mathbb{N}}$ of train-tracks directed by $\xi$, such that $\tau_i=\tau'_i$ for all $i\in\{0,\dots,k-1\}$, and  $\tau'_k$ is obtained from $\tau_k$ by a finite (possibly trivial) sequence of specializations.
\end{lemma}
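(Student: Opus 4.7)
The plan is to build the infinite sequence $(\tau'_i)_{i\in\mathbb{N}}$ inductively from $\tau_k$ by repeatedly invoking Lemma~\ref{fold-exist}, relying on the fact that every chain of specializations terminates.

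First I would fix a mixing representative $T$ of $\xi$. Any mixing $\calz$-averse tree has dense orbits (a simplicial tree in $\overline{cv}_N$ is itself a $\calz$-splitting and so cannot be $\calz$-averse), and $\xi\in P(\tau_k)$ gives $i(\tau_k)=i(\xi)=i(T)$, so $\tau_k$ is an ideal carrier of $T$, i.e.\ $T\in\calp(\tau_k)$. I set $\tau'_i=\tau_i$ for $i\in\{0,\dots,k-1\}$. To produce $\tau'_k$, I apply specializations to $\tau_k$ as long as there is one under which $T$ remains carried, stopping when none exist. This procedure terminates: each specialization at an exceptional orbit $F_N\cdot[v_0]$ merges this orbit into another orbit of equivalence classes of vertices (the underlying simplicial tree being unchanged), so the number of such orbits strictly decreases at every step. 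By concatenating these extra specializations with the folding move and specializations already passing from $\tau_{k-1}$ to $\tau_k$, we see that $\tau'_k$ is obtained from $\tau'_{k-1}=\tau_{k-1}$ by a folding move followed by a finite sequence of specializations, and by Lemma~\ref{lem_spec} it still has $T\in\calp(\tau'_k)$.

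Next comes the inductive step. Suppose $\tau'_j$ has been constructed for some $j\ge k$ with $T\in\calp(\tau'_j)$ and no specialization of $\tau'_j$ carrying $T$. By Lemma~\ref{fold-exist} there is a train-track $\tau''\ne\tau'_j$, obtained either by a specialization or by a folding move, with $T\in\calp(\tau'')$. Since no specialization of $\tau'_j$ carries $T$, the train-track $\tau''$ must arise from a folding move. I then specialize $\tau''$ as far as possible while keeping $T$ carried, reaching $\tau'_{j+1}$; by the same finiteness argument this stops after finitely many steps, and by construction no specialization of $\tau'_{j+1}$ carries $T$, so the inductive hypothesis is preserved.

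Finally, at every stage $i\ge k$ we have $T\in\calp(\tau'_i)$, hence $\tau'_i$ is an ideal carrier of $T$ and $\xi\in P(\tau'_i)$; admissibility of each $\tau'_i$ follows from Lemmas~\ref{lem_spec},~\ref{lem_singular},~\ref{lem_partial} and~\ref{lem_full}, and each step $\tau'_i\to\tau'_{i+1}$ is by construction a folding move followed by finitely many specializations. The only delicate point in the argument is that the ``specialize maximally'' subroutine must terminate --- otherwise a single index of the folding sequence would absorb infinitely many specializations --- and this is exactly what the orbit-counting observation above guarantees.
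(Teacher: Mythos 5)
Your proof is correct and follows essentially the same route as the paper's, which simply says to iterate Lemma~\ref{fold-exist} starting from $\tau_k$ and to ``notice that we can only perform finitely many specializations in a row'' --- your orbit-counting argument (each nontrivial specialization merges two $F_N$-orbits of equivalence classes of vertices on a fixed simplicial tree) is exactly the justification of that finiteness claim. The only detail you leave implicit is that $\xi\in P(\tau_i)$ for $i<k$ as well, which follows from the nested inclusions of Lemma~\ref{subspace} and is needed for the resulting sequence to be directed by $\xi$.
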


\begin{proof}
Lemma \ref{subspace} shows that $\xi\in P(\tau_i)$ for all $i\in\{1,\dots,k\}$. The conclusion is then obtained by iteratively applying Lemma~\ref{fold-exist} to some mixing representative of the class $\xi$ (starting with the train-track $\tau_k$), and noticing that we can only perform finitely many specializations in a row.
\end{proof}

\subsection{Openness of the set of points carried by a specialization or a fold}\label{sec-open}

\subsubsection{From trees to equivalence classes of trees}

We first reduce the proofs of Lemmas~\ref{specialization} and~\ref{fold-open} to their analogous versions for trees in $\overline{cv}_N$.

\begin{lemma}\label{trees-to-equiv}
Let $\tau$ and $\tau'$ be admissible train-tracks. 
\\ If $\calp(\tau')$ is an open subset of $\calp(\tau)$ in $\partial{cv}_N$, then $P(\tau')$ is an open subset of $P(\tau)$ in $\calx_N/{\approx}$.
\end{lemma}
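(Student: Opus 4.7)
The plan is to prove this contrapositively using the sequential characterization of openness available in the metrizable space $\calx_N/{\approx}$. Assume the conclusion fails, so that there is a class $\xi\in P(\tau')$ and a sequence $(\xi_n)_{n\in\mathbb{N}}$ in $P(\tau)\setminus P(\tau')$ converging to $\xi$. The goal is to lift this configuration back to $\partial cv_N$, where the hypothesized openness of $\calp(\tau')$ in $\calp(\tau)$ can be invoked to derive a contradiction.

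For each $n$, I would pick the mixing representative $T_n$ of $\xi_n$. Since every $\calz$-averse tree has dense orbits (a non-trivial simplicial part would force compatibility with a $\calz$-splitting), the trees $T_n$ lie in $\calp(\tau)$ and in fact outside $\calp(\tau')$, by the definition of the cells at the level of $\approx$-classes. Projective compactness of $\overline{cv}_N$ together with the closedness of $\calx_N$ in $\overline{cv}_N$ (invoked already in the proof of Proposition~\ref{closed}) then let me pass to a subsequence so that $T_n\to T_\infty$ for some $T_\infty\in\calx_N$. Continuity of the quotient map $\calx_N\to\calx_N/{\approx}$ yields that $T_\infty$ represents $\xi$.

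The key technical input is Lemma~\ref{lem_stab_up}: applied to the sequence $(T_n)$ of mixing $\calz$-averse trees carried by $\tau$ converging to $T_\infty$, it forces one of two alternatives, namely that $T_\infty$ is mixing with $\cala(\tau)$ maximal elliptic in $T_\infty$, or else that $i(\xi)>i(\tau)$. The latter is excluded by $\xi\in P(\tau)$, so $T_\infty$ is mixing. Consequently $T_\infty$ is, up to an $F_N$-equivariant alignment-preserving bijection, the unique mixing representative of $\xi$; Lemma~\ref{carry-collapse-1} then transports the carrying data for $\tau'$ from a mixing representative of $\xi$ (which lies in $\calp(\tau')$ by assumption) onto $T_\infty$, and since carrying indices agree across alignment-preserving bijections we obtain $T_\infty\in\calp(\tau')$.

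The hypothesis now closes the argument: it supplies an open $V\subseteq\partial cv_N$ containing $T_\infty$ with $V\cap\calp(\tau)\subseteq\calp(\tau')$, and since $T_n\to T_\infty$ with $T_n\in\calp(\tau)$, eventually $T_n\in\calp(\tau')$, whence $\xi_n\in P(\tau')$, contradicting the choice of $\xi_n$. The main obstacle I anticipate is exactly the step forcing $T_\infty$ to be mixing: a priori, a Gromov--Hausdorff limit of mixing trees need not be mixing, and without this we have no control over how $T_\infty$ relates to the mixing representative of $\xi$, and no way to migrate the carrying condition from the hypothesized open neighbourhood in $\partial cv_N$. Lemma~\ref{lem_stab_up} is tailored precisely to supply this control once the index has been constrained by the assumption $\xi\in P(\tau)$.
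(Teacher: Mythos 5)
Your argument is correct and essentially identical to the paper's: both pass to mixing representatives $T_n$ of the $\xi_n$, extract a subsequential limit $T_\infty\in\overline{cv}_N$ representing $\xi$, rule out the non-mixing case by a maximal-ellipticity/index argument (you via Lemma~\ref{lem_stab_up}, the paper directly via Corollary~\ref{cor-mixing}, on which Lemma~\ref{lem_stab_up} rests), and then invoke the hypothesized openness of $\calp(\tau')$ in $\calp(\tau)$ to force $T_n\in\calp(\tau')$ eventually. The one small slip is that the fact you need is closedness of the boundary map $\partial\pi:\calx_N\to\partial_\infty FZ_N$ (exactly what is invoked in the proof of Proposition~\ref{closed}), not closedness of the subspace $\calx_N$ in $\overline{cv}_N$, which does not hold; since you point to the correct source, this is a mislabelling rather than a gap.
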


\begin{proof}
The conclusion is obvious if $\calp(\tau')$ is empty (in this case $P(\tau')$ is also empty), so we assume otherwise. Since $\calp(\tau')\subseteq\calp(\tau)$, we have $i(\tau')=i(\tau)$. Let $\xi\in P(\tau')$, and let $(\xi_n)_{n\in\mathbb{N}}\in P(\tau)^{\mathbb{N}}$ be a sequence converging to $\xi$. We wish to prove that $\xi_n\in P(\tau')$ for all sufficiently large $n\in\mathbb{N}$ (we may use sequential arguments as $\calx_N/{\approx}$ is a separable metric space).

For all $n\in\mathbb{N}$, let $T_n$ be a mixing representative of $\xi_n$. Since $\overline{cv}_N$ is projectively compact, up to passing to a subsequence, we can assume that $(T_n)_{n\in\mathbb{N}}$ converges  projectively to a tree $T\in\overline{cv}_N$. Closedness of the boundary map $\partial\pi:\calx_N\to\partial_\infty FZ_N$ shows that $T\in\calx_N$, and $T$ is a representative of the class $\xi$. Since $\cala(\tau)$ is elliptic in each tree $T_n$, it is also elliptic in $T$. If $T$ is not mixing, then Corollary~\ref{cor-mixing} implies that $\cala(\tau)$ is not the largest free factor system elliptic in the mixing representatives of $\xi$, contradicting that $i(\xi)=i(\tau)$. So $T$ is mixing, and the fact that $\xi\in P(\tau')$ thus implies that $T\in\calp(\tau')$. Openness of $\calp(\tau')$ in $\calp(\tau)$ then shows that $T_n\in\calp(\tau')$ for all sufficiently large $n\in\mathbb{N}$. Since $T_n$ is a mixing representative of $\xi_n$, this precisely means that $\xi_n\in P(\tau')$ for all sufficiently large $n\in\mathbb{N}$, as desired.
\end{proof}

\subsubsection{Specializations}\label{sec-specialize}

In this section, we will prove Lemma~\ref{specialization}. More precisely, we will prove its analogous version for trees in $\overline{cv}_N$, from which Lemma~\ref{specialization} follows thanks to Lemma~\ref{trees-to-equiv}.


\begin{lemma}\label{special}
Let $\tau$ be an admissible train-track, and let $\tau'$ be a specialization of $\tau$.
\\ Then $\calp(\tau')$ is an open subset of $\calp(\tau)$.
\end{lemma}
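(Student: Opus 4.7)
The plan is to take $T \in \calp(\tau')$ and a sequence $(T_n)$ in $\calp(\tau)$ with $T_n \to T$, and show that $T_n \in \calp(\tau')$ for all sufficiently large $n$. Since $\tau'$ is a specialization of $\tau$ we have $S^{\tau'}=S^\tau$ and $i(\tau')=i(\tau)$ by Lemma~\ref{lem_spec}. By uniqueness of the carrying map (Lemma~\ref{carrier-unique}), the map $f:S^\tau\to T$ carrying $T$ with respect to $\tau$ also serves as the carrying map of $T$ with respect to $\tau'$: indeed, $\tau_f$ is obtained from $\tau'$ by a finite sequence of specializations, hence from $\tau$ by a finite sequence of specializations (since $\tau\to\tau'$ is a specialization). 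Similarly, for each $n$ let $f_n:S^\tau\to T_n$ be the unique carrying map with respect to $\tau$; by the continuity part of Lemma~\ref{carrier-unique}, $f_n\to f$ in the equivariant Gromov--Hausdorff topology.

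To conclude $T_n\in\calp(\tau')$ it will be enough to verify that $\tau_{f_n}$ is obtained from $\tau'$ (not merely from $\tau$) by specializations; equivalently, the specialization $\tau\to\tau'$ must appear in the chain $\tau\to\tau_{f_n}$, i.e.\ $\tau_{f_n}$ must identify the orbit $[v_0]$ with $[v_1]$ and merge the $[v_0]$-gates into the corresponding $[v_1]$-gates as $\tau'$ prescribes. The plan is a pigeonhole/compactness argument: since $\tau$ has only finitely many $F_N$-orbits of vertices and directions, only finitely many train-tracks arise as specializations of $\tau$, so after passing to a subsequence one may assume $\tau_{f_n}=\sigma$ for a fixed specialization $\sigma$ of $\tau$. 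The convergence $f_n\to f$ forces any identification among vertex images or gate germs that holds uniformly in $n$ (i.e.\ is already encoded in $\sigma$) to persist to $f$, so $\tau_f$ is obtained from $\sigma$ by further specializations. Combined with the fact that $\tau_f$ is a specialization of $\tau'$ (from the first paragraph), this shows that $\tau_f$ sits above both $\sigma$ and $\tau'$ in the poset of specializations of $\tau$.

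The main obstacle will be to rule out the case where $\sigma$ is not already a specialization of $\tau'$, i.e.\ does not itself implement the $[v_0]$--$[v_1]$ merging. Here I would exploit the admissibility of $\tau$ at $v_0$: there is a tripod of legal axes at $v_0$, the orbit $[v_0]$ is exceptional in $\tau$ (exactly three gates), and by the defining conditions of the specialization $\tau\to\tau'$, each of the three $[v_0]$-gates is $\tau'$-equivalent to a specific gate at $[v_1]$. The common specialization $\tau_f$ must realize this $[v_0]$--$[v_1]$ gate matching while being reachable from $\sigma$ by specialization moves, which only act on exceptional classes. A careful bookkeeping of which orbit $[v_0]$ may already have been merged with in $\sigma$, combined with the requirement that the three $[v_0]$-gates match specific gates at $[v_1]$ in the end, should force $\sigma$ itself to already contain the $\tau\to\tau'$ specialization, yielding the contradiction. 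This then shows $T_n\in\calp(\tau')$ for all sufficiently large $n$, completing the proof.
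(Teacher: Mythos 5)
Your setup (reduce to showing $T_n\in\calp(\tau')$ for large $n$, using uniqueness and continuity of the carrying maps $f_n\to f$) matches the paper's, but the core of your argument has two genuine gaps. First, the pigeonhole step fails: there are \emph{not} finitely many specializations of $\tau$. A specialization at an exceptional class $[v_0]$ is determined by the choice of the equivalence class $[v_1]$ it is merged with, and $S^\tau$ has infinitely many equivalence classes of vertices (finitely many $F_N$-orbits of classes, but infinitely many classes). This is exactly the phenomenon discussed in the appendix, where the carrying maps of nearby trees $T_n$ identify a vertex $v$ with vertices $v_n$ going further and further away in $S^\tau$; so you cannot assume $\tau_{f_n}=\sigma$ is constant along a subsequence. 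Second, your persistence claim points the wrong way: identifications that hold for all $n$ do pass to the limit (equalities are closed conditions), but what the lemma requires is the converse — that the identification $f(v_0)=f(v_1)$ and the gate merging present in the \emph{limit} already hold for $f_n$ with $n$ large. That is an openness statement, it is false for arbitrary identifications (two image points can coincide only in the limit), and your third paragraph only gestures at why it should hold here.

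The missing idea, which is the whole content of the paper's proof, is a tripod argument exploiting admissibility. Choose edges $e_0^1,e_0^2,e_0^3$ at vertices of $[v_0]$ in the three distinct gates, and edges $e_1^1,e_1^2,e_1^3$ at vertices $\tau$-equivalent to $v_1$ that $\tau'$ declares equivalent to them. Since $T$ is carried by $\tau'$, each intersection $f(e_0^i)\cap f(e_1^i)$ is nondegenerate; nondegeneracy of these overlaps is an \emph{open} condition, so by continuity of the carrying map (Lemma~\ref{carrier-unique}) it holds for $f_n$ with $n$ large. Because the $e_0^i$ are pairwise inequivalent, their $f_n$-images form a genuine tripod at $f_n(v_0)$; if $f_n(v_1)\neq f_n(v_0)$, then at least two of the segments $f_n(e_1^i)$ would have to overlap along $[f_n(v_0),f_n(v_1)]$, contradicting that the $e_1^i$ are pairwise inequivalent. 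Hence $f_n(v_1)=f_n(v_0)$ and the gates merge as prescribed, so $T_n$ is carried by $\tau'$. Without some such geometric mechanism converting the limit identification into an open condition, the proof does not close.
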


\begin{proof}
By Lemma~\ref{subspace}, we have $\calp(\tau')\subseteq\calp(\tau)$. Let $T\in\calp(\tau')$, and let $(T_n)_{n\in\mathbb{N}}\in \calp(\tau)^{\mathbb{N}}$ be a sequence of trees that converges to $T$. We aim to show that $T_n$ is carried by $\tau'$ for all sufficiently large $n\in\mathbb{N}$, which will imply that $T_n\in\calp(\tau')$ by equality of the indices. Let $[v_0]$ be the class of vertices in $S^\tau$ at which the specialization occurs, and let $v_1\in V(S^\tau)$ be a vertex identified with $v_0$ in $\tau'$.

Let $e_0^1,e_0^2,e_0^3$ be three edges adjacent to vertices in the class $[v_0]$, determining distinct gates, and let $e_1^1,e_1^2,e_1^3$ be edges based at vertices $\tau$-equivalent to $v_1$ that are identified with $e_0^1,e_0^2,e_0^3$ in $\tau'$. We aim to show that for all sufficiently large $n\in\mathbb{N}$, the vertices $v_0$ and $v_1$ are identified by the carrying map $f_n:S^\tau\to T_n$, and so are initial segments of $e_0^i$ and $e_1^i$ for all $i\in\{1,2,3\}$. 

For all $i\in\{1,2,3\}$, the intersection $f(e_0^i)\cap f(e_1^i)$ (where $f:S^\tau\to T$ is the carrying map) is nondegenerate. Since the carrying map varies continuously with the carried tree (Lemma~\ref{carrier-unique}), for all sufficiently large $n\in\mathbb{N}$, the image $f_n(e_1^i)$ has nondegenerate intersection with $f_n(e_0^i)$. Since the three directions determined by $e_0^1,e_0^2,e_0^3$ are inequivalent, their images $f_n(e_0^1),f_n(e_0^2),f_n(e_0^3)$ form a tripod at $f_n(v_0)$. If $f_n(v_1)\neq f_n(v_0)$, then at least two of the images $f_n(e_1^1),f_n(e_1^2),f_n(e_1^3)$ would then have a nondegenerate intersection (containing $[f_n(v_0),f_n(v_1)]$), which is a contradiction because these three edges are inequivalent. So $f_n(v_1)=f_n(v_0)$, and since $f_n(e_1^i)$ has nondegenerate intersection with $f_n(e_0^i)$, it follows that $T_n$ is carried by $\tau'$. 
\end{proof}

\subsubsection{Folds}\label{sec-fold}

Lemma~\ref{fold-open} follows from the following proposition together with Lemma~\ref{trees-to-equiv}. 

\begin{prop}\label{fold-unique}
Let $\tau$ be an admissible train-track. Let $\tau'$ be a train-track obtained from $\tau$ by folding an illegal turn. 
\\ Then $\calp(\tau')$ is open in $\calp(\tau)$.
\end{prop}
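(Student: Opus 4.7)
By Lemma \ref{trees-to-equiv} it suffices to show that $\calp(\tau')$ is open in $\calp(\tau)$. So my plan is to take $T\in\calp(\tau')$ and a sequence $(T_n)_{n\in\mathbb{N}}$ in $\calp(\tau)$ converging to $T$, and verify that $T_n\in\calp(\tau')$ for all sufficiently large $n$. By Lemma \ref{subspace} we have $i(\tau')=i(\tau)$, so this amounts to showing that $T_n$ is carried by $\tau'$ for large $n$, and by Lemma \ref{carrier-unique} the carrying maps $f_n:S^\tau\to T_n$ converge to the carrying map $f:S^\tau\to T$. This continuity of carrying maps is the main analytic input throughout.

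I would then split into the three types of folds. In each case the strategy is the same: the hypothesis that $T$ is carried by $\tau'$ is recorded as an equality or strict inequality involving $f$; this condition transfers to $f_n$ by continuity, and the corresponding construction lemma from Section~\ref{sec-moves} then produces $T_n\in\calp(\tau')$. Concretely, for a singular fold at $\{e_1,e_2\}$ with $e_i=[v,v_i]$, the hypotheses $v_1\sim^\tau_V v_2$ and equivalence of the incoming germs already force $f_n(v_1)=f_n(v_2)$ and matching germs of $f_n(e_1),f_n(e_2)$ at both endpoints, so because $T_n$ is a tree we get $f_n(e_1)=f_n(e_2)$; hence $f_n$ factors through the fold $g:S^\tau\to S^{\tau'}$. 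For a partial fold, the strict inequality $|f(e_1)\cap f(e_2)|<\min(|f(e_1)|,|f(e_2)|)$ forced by $T\in\calp(\tau')$ persists for $f_n$, and Lemma~\ref{fold-p} applies. For a full fold of $e_1$ into $e_2$ with special gate $[d_0]$ based at $v'_1\sim^\tau_V v_1$, the factorization $f=f'\circ g$ yields both $|f(e_1)|<|f(e_2)|$ and the strict triangle inequality of Lemma~\ref{fold-f-1}; since $\{e_1,e_2\}$ is illegal the arcs $f_n(e_1),f_n(e_2)$ share an initial germ at $f_n(v)$, so $f_n(e_1)\subsetneq f_n(e_2)$ for large $n$, and the triangle inequality also persists by continuity, so Lemma~\ref{fold-f-1} produces $T_n\in\calp(\tau')$.

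The main obstacle I expect is the full fold case, where one needs to check that the specific direction $d_0$ featured in the construction of $\tau'$ still realizes the ``right'' direction in the nearby trees $T_n$. This is precisely what the tight form of Lemma~\ref{fold-f-1} was designed for: the condition on $d_0$ is a comparison of four distances, all of which vary continuously under equivariant Gromov--Hausdorff convergence. A secondary point to verify is that when $f_n$ factors as $f'_n\circ g$ in the singular and full fold cases, the induced train-track on $S^{\tau'}$ is obtained from $\tau'$ by specializations rather than by creating new gates; but any new gate would push the carrying index strictly above $i(\tau')=i(\tau)=i(T_n)$, which is impossible, so $T_n\in\calp(\tau')$ as required.
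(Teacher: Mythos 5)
Your overall route is the same as the paper's: split according to the type of fold, use the continuity of carrying maps (Lemma~\ref{carrier-unique}) to transfer the conditions characterizing $T\in\calp(\tau')$ to the nearby trees $T_n$, and then invoke Lemmas~\ref{lem_singular}, \ref{fold-p} and~\ref{fold-f-1} to conclude. The singular and partial fold cases are fine (the paper settles the singular case even faster, since Lemma~\ref{lem_singular} already gives $\calp(\tau')=\calp(\tau)$ there), and your closing remark about new gates forcing the index up is exactly how the paper rules out spurious identifications. (A small slip at the start: Lemma~\ref{trees-to-equiv} is used to pass \emph{from} this proposition to the statement about $P(\tau')$, not the other way around; the statement you are proving is already the one about $\calp$.)

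There is, however, a genuine gap in the full fold case, at the step ``since $\{e_1,e_2\}$ is illegal the arcs $f_n(e_1),f_n(e_2)$ share an initial germ at $f_n(v)$, so $f_n(e_1)\subsetneq f_n(e_2)$ for large $n$.'' Sharing an initial germ at $f_n(v)$ only says that $f_n(e_1)\cap f_n(e_2)$ is nondegenerate; this holds for \emph{every} tree carried by $\tau$, including those in the partial-fold cell, where the containment fails. Even together with $|f_n(e_1)|<|f_n(e_2)|$, it leaves open the possibility that the two image arcs diverge at a branch point of $T_n$ strictly before $f_n(v_1)$, with the defect $|f_n(e_1)|-|f_n(e_1)\cap f_n(e_2)|$ positive but tending to $0$; if that happened, $T_n$ would be carried by the partial fold and not by $\tau'$. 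The containment must instead be extracted from the special gate: $f_n(e_1)\subseteq f_n(e_2)$ is equivalent to saying that the germs at $f_n(v_1)$ pointing towards $f_n(v)$ and towards $f_n(v_2)$ are distinct. The persistent strict triangle inequality tells you that the germ towards $f_n(v_2)$ coincides with the germ of $f_n(d_0)$; on the other hand $d_0$ and the direction of $e_1$ at $v_1$ pointing back towards $v$ lie in distinct gates of $\tau$ (this follows from $T\in\calp(\tau')$), so the carrying map $f_n$ sends them to distinct germs at $f_n(v_1)=f_n(v'_1)$. Combining the two gives the required containment. The paper is itself terse at this exact point (it records the branch-point/translation-length control and asserts the conditions are open before quoting Corollary~\ref{fold-f}), but the inequality it isolates is genuinely open, whereas the inference you wrote down is not valid as stated.
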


\begin{proof}
By Lemma~\ref{subspace}, we have $\calp(\tau')\subseteq\calp(\tau)$. If $\tau'$ is obtained from $\tau$ by applying a singular fold, then Lemma \ref{lem_singular} ensures that $\calp(\tau')=\calp(\tau)$, so the conclusion holds. Denoting by $f:S^\tau\to T$ the carrying map, we can thus assume that $f(v_1)\neq f(v_2)$ (using the notations of the previous sections). Let $p:=f(e_1)\cap f(e_2)$. The endpoints of $p$, $f(e_1)$, and $f(e_2)$ are branch points in $T$ (because $\tau$ is admissible), so the lengths of these segments are determined by a finite set of translation lengths in $T$.

If $\tau'$ is obtained from $\tau$ by a partial fold, then $|p|$ is strictly less than both $|f(e_1)|$ and $|f(e_2)|$. Using the fact that the carrying map $f$ varies continuously with the carried tree (Lemma~\ref{carrier-unique}), we see that this property remains true for all trees in a neighborhood of $T$, and Lemma~\ref{fold-p} implies that all trees in this neighborhood are carried by $\tau'$.

Assume now that $\tau'$ is obtained from $\tau$ by a full fold, with $e_1$ fully folded into $e_2$. Let $v_d$ be the endpoint of an edge corresponding to a direction $d$ based at a vertex $v'_1$ equivalent to $v_1$, in the special gate. Then $|f(e_1)|<|f(e_2)|$, and $d_T(f(v_d),f(v_2))<d_T(f(v_d),f(v_1'))+d_T(f(v_1'),f(v_2))$. These are open conditions in $\overline{cv}_N$. Therefore, Lemma~\ref{fold-f-1} gives the existence of an open neighborhood $U$ of $T$ in $\calp(\tau)$ such that all trees in $U$ are carried by the same full fold. 
\end{proof}

\subsection{Control on the diameter: getting finer and finer decompositions}\label{sec-diam}

The goal of the present section is to prove Lemma~\ref{fold-diam}. The key lemma is the following.

\begin{lemma}\label{converge}
Let $\xi\in\partial_\infty FZ_N$, and let $(\tau_i)_{i\in\mathbb{N}}$ be a folding sequence of train-tracks directed by $\xi$. Then $(S^{\tau_i})$ converges to $\xi$ for the topology on $FZ_N\cup\partial_\infty FZ_N$. 
\end{lemma}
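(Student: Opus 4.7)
The plan is to transport the problem to $\overline{cv}_N$ via the continuity statement of Theorem~\ref{bdy-fz}. Fix a mixing representative $T\in\overline{cv}_N$ of $\xi$. For each $i$, since $\xi\in P(\tau_i)$ the tree $T$ is carried by $\tau_i$, so Lemma~\ref{carrier-unique} provides a unique carrying map $f_i:S^{\tau_i}\to T$. Equip $S^{\tau_i}$ with the metric determined by $f_i$, yielding a tree $\tilde T_i\in\overline{cv}_N$; because $\pi$ forgets metrics, $\pi(\tilde T_i)=\pi(S^{\tau_i})$. Granted that $\tilde T_i\to T$ in $\overline{cv}_N$ (in the equivariant Gromov--Hausdorff topology), the continuity of $\partial\pi$ from Theorem~\ref{bdy-fz} then yields $\pi(S^{\tau_i})=\pi(\tilde T_i)\to\partial\pi(T)=\xi$ in $FZ_N\cup\partial_\infty FZ_N$, as required (after approximating each $\tilde T_i$ by trees in $cv_N$ if $\cala(\tau_i)\neq\emptyset$).

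The convergence $\tilde T_i\to T$ reduces, in the length-function topology, to showing $\|g\|_{\tilde T_i}\to\|g\|_T$ for every $g\in F_N$. Each folding move of Section~\ref{sec-moves} lifts to an $F_N$-equivariant $1$-Lipschitz fold map $g_i:\tilde T_i\to\tilde T_{i+1}$ satisfying $f_i=f_{i+1}\circ g_i$, since specializations leave the underlying metric tree and the carrying map unchanged. Consequently $\|g\|_{\tilde T_i}\geq\|g\|_{\tilde T_{i+1}}\geq\|g\|_T$, so $(\|g\|_{\tilde T_i})_{i\in\mathbb{N}}$ decreases monotonically to some $\ell_g\geq\|g\|_T$. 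The discrepancy $\|g\|_{\tilde T_i}-\|g\|_T$ measures twice the backtracking per period that $f_i$ introduces along the axis of $g$, concentrated at the illegal turns of $\tau_i$ crossed by that axis.

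The heart of the proof is showing that $\ell_g=\|g\|_T$ for every $g$. I would argue by contradiction: extracting a subsequential limit of $(\tilde T_i)$ in the space of $F_N$-trees yields a tree $T_\infty$ with length function $\ell_{\bullet}$, and the maps $f_i$ pass to an $F_N$-equivariant $1$-Lipschitz surjection $T_\infty\to T$ as a limit. If $T_\infty\neq T$, this surjection identifies germs of some pair of distinct directions in $T_\infty$, producing a ``persistent illegal turn'' that survives the entire folding sequence. The plan is then to extract from $T_\infty$ an admissible train-track that carries $T$ with index strictly greater than $i(\xi)$, using the same combinatorial counting as in the proof of Proposition~\ref{prop_index_up}; this contradicts the constancy of the index along the folding sequence, forcing $T_\infty=T$. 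The main obstacle is this last step: adapting the jump-in-index mechanism of Proposition~\ref{prop_index_up} from the setting of a sequence of trees carried by one fixed train-track to the present situation of an infinite folding sequence in which the carrying tracks vary with $i$, and where admissibility of each $\tau_i$ together with control of the carrying maps $f_i$ must be carefully propagated to the limit.
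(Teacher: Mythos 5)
Your setup matches the paper's: put the metric induced by the carrying map $f_i$ on $S^{\tau_i}$, observe that the resulting trees $\tilde T_i$ admit $1$-Lipschitz fold maps $\tilde T_i\to\tilde T_{i+1}$ over $T$, so translation lengths decrease monotonically and the sequence converges to a limit tree $T_\infty$ admitting a $1$-Lipschitz equivariant map to (the completion of) $T$. But the heart of your argument — ruling out $T_\infty\neq T$ by producing a ``persistent illegal turn'' and extracting from it a carrier of $T$ of index strictly greater than $i(\xi)$ — is exactly the step you concede is an obstacle, and it is a genuine gap: the index-jump mechanism of Proposition~\ref{prop_index_up} requires a \emph{fixed simplicial} track carrying a convergent sequence of trees, whereas $T_\infty$ is a non-simplicial limit of a varying sequence of tracks and supports no train-track structure in the sense of Definition~\ref{d:train-track}. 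You give no construction of the alleged higher-index carrier. Moreover, the genuinely dangerous degenerations are not addressed at all: the limit $T_\infty$ could a priori be \emph{simplicial} (if the folds eventually stall), or could fail to have dense orbits; in either case it is not $\calz$-averse and the continuity of $\partial\pi$ cannot be invoked. Finally, you aim for the conclusion $T_\infty=T$, which is stronger than needed (and not what the paper proves); only $T_\infty\approx T$ is required, since continuity of $\partial\pi$ just needs the limit to lie in the class $\xi$.

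The paper closes these gaps by a different route, with no index argument. Non-simpliciality of the limit follows from a covolume count: since $T$ has trivial arc stabilizers, no fold ever identifies two edges in the same orbit, so each folding move decreases the covolume of $S_i/F_N$ by a definite amount, and an infinite folding sequence therefore cannot terminate in a simplicial limit. Dense orbits of the limit follow from a transverse-family argument: a non-dense-orbit, non-simplicial limit would contain a free factor acting with dense orbits on its minimal subtree, whose image in $T$ would generate a transverse family with free factor stabilizer, contradicting \cite[Proposition~4.23]{Hor}. Given dense orbits, the $1$-Lipschitz limit map is alignment-preserving by \cite[Proposition~5.7]{Hor0}, so the limit is $\calz$-averse and $\approx$-equivalent to $T$ by \cite[Theorem~4.1]{Hor}, and continuity of $\partial\pi$ (Theorem~\ref{bdy-fz}) finishes the proof. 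To repair your write-up you would need to replace the index-jump sketch with arguments of this kind (or supply the missing construction, which I do not see how to do).
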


\begin{proof}
Let $T$ be a mixing representative of $\xi$. We can find a sequence $(S_n)_{n\in\mathbb{N}}$ of simplicial metric $F_N$-trees with trivial edge stabilizers such that
\begin{itemize}
\item for all $n\in\mathbb{N}$, the simplicial tree $S^{\tau_n}$ is obtained from $S_n$ by forgetting the metric,
\item for all $n\in\mathbb{N}$, the unique carrying map $f_n:S_n\to T$ is isometric on edges.
\end{itemize} 
\noindent In addition, for all $i<j$, there are natural morphisms $f_{ij}:S_i\to S_j$, such that $f_{ik}=f_{jk}\circ f_{ij}$ for all $i<j<k$. The sequence $(S_n)_{n\in\mathbb{N}}$ converges to a tree $S_\infty$: indeed, for all $g\in F_N$, the sequence $(||g||_{S_n})_{n\in\mathbb{N}}$ is non-increasing, so it converges. In addition, the tree $S_\infty$ is not reduced to a point, because the legal structure on $S_0$ induced by the morphisms $f_{0j}$ will stabilize as $j$ goes to $+\infty$, and a legal element (which exists because $\tau_0$ is admissible, and every legal turn for the train-track $\tau_0$ is also legal for the train-track structure on $S_0$ induced by the morphisms $f_{0j}$) cannot become elliptic in the limit. By taking a limit of the maps $f_{0j}$, we get an $F_N$-equivariant map $f_{0\infty}:S_0\to S_\infty$, which is isometric when restricted to every edge of $S_0$.

We will show that $S_\infty$ is $\calz$-averse and $\approx$-equivalent to $T$, which implies the convergence of $S^{\tau_i}$ to $\xi$ (for the topology on $FZ_N\cup\partial_\infty FZ_N$) by the continuity statement for the boundary map $\partial\pi$ given in Theorem~\ref{bdy-fz}. 

We first claim that the tree $S_\infty$ is not simplicial. Indeed, assume towards a contradiction that it is, and let $S'_\infty$ be the simplicial tree obtained from $S_\infty$ by adding the $f_{0\infty}$-images of all vertices of $S_0$ to subdivide the edges of $S_\infty$. Subdivide the edges of $S_0$ so that every edge of $S_0$ is mapped to an edge of $S'_\infty$. Since $T$ has trivial arc stabilizers, the folding process from $S_0$ to $S'_\infty$ never identifies two edges in the same orbit, so there is a lower bound on the difference between the volume of $S_{i+1}/F_N$ and the volume of $S_i/F_N$. Therefore, the folding process has to stop, contradicting the fact that the folding sequence is infinite. This shows that $S_\infty$ is not simplicial.

We now assume towards a contradiction that $S_\infty$ is not equivalent to $T$. There exists a $1$-Lipschitz $F_N$-equivariant map $f$ from $S_\infty$ to the metric completion of $T$, obtained by taking a limit of the maps $f_{n\infty}$ as $n$ goes to infinity, see the construction from \cite[Theorem 4.3]{Hor0}. Therefore, if $S_\infty$ has dense orbits, then there is a $1$-Lipschitz $F_N$-equivariant alignment-preserving map from $S_\infty$ to $T$ by \cite[Proposition 5.7]{Hor0}, which implies that $S_\infty$ is $\calz$-averse and equivalent to $T$ by \cite[Theorem~4.1]{Hor}. Therefore, its remains to show that $S_\infty$ has dense orbits. Assume towards a contradiction that it does not. Since $S_\infty$ is a limit of free and simplicial $F_N$-trees that all admit $1$-Lipschitz maps onto it, it has trivial arc stabilizers. Since $S_\infty$ is not simplicial, there is a free factor $A$ acting with dense orbits on its minimal subtree $S_A$ in $S_\infty$. By \cite[Lemma~3.10]{Rey}, the translates in $\overline{T}$ of $f(S_A)$ (which cannot be reduced to a point) form a transverse family in $\overline{T}$: this is a contradiction because the stabilizer of a subtree in a transverse family in a mixing $\calz$-averse tree cannot be a free factor (in fact, it cannot be elliptic in any $\calz$-splitting of $F_N$ by \cite[Proposition~4.23]{Hor}). 
\end{proof}

\begin{proof}[Proof of Lemma \ref{fold-diam}]
Let $\xi'\in P(\tau_i)$, and let $T'$ be a mixing representative of $\xi'$.  By Lemma~\ref{strong}, there exists a folding sequence of train-tracks $(\tau'_j)_{j\in\mathbb{N}}$ directed by $\xi'$, such that $\tau'_j=\tau_j$ for all $j<i$, and $\tau'_i$ is obtained from $\tau_i$ by a finite (possibly trivial) sequence of specializations. All trees $S^{\tau'_i}$ then lie on the image in $FZ_N$ of an optimal liberal folding path, which is a quasi-geodesic by \cite{Man}, and $(S^{\tau'_i})_{i\in\mathbb{N}}$ converges to $\xi'$ by Lemma~\ref{converge}. This shows that any quasi-geodesic ray in $FZ_N$ from $S^{\tau_0}$ to a point in $P(\tau_i)$ passes within bounded distance of $S^{\tau_i}$. As $S^{\tau_i}$ converges to $\xi\in\partial_\infty FZ_N$ (Lemma~\ref{converge}), it follows from the definition of a visual distance that the diameter of $P(\tau_i)$ converges to $0$.
\end{proof}

\section{End of the proof of the main theorem}\label{sec-proof}

We now sum up the arguments from the previous sections to complete the proof of our main theorem.

\begin{proof}[Proof of the main theorem]
We start with the case of $FZ_N$. The Gromov boundary $\partial_\infty FZ_N$ is a separable metric space (equipped with a visual metric). It can be written as the union of all strata $X_i$ (made of boundary points of index $i$), where $i\in\mathbb{N}^2$ can only take finitely many values. Each stratum $X_i$ is the union of the sets $P(\tau)$, where $\tau$ varies over the collection of all train-tracks of index $i$, and the collection of all train-tracks $\tau$ for which $P(\tau)$ is nonempty is countable by Lemma~\ref{countable}. By Proposition~\ref{closed}, each cell $P(\tau)$ is closed in its stratum $X_i$, and by Proposition~\ref{dim-0} it has dimension at most $0$. So each $X_i$ is a countable union of closed $0$-dimensional subsets, so $X_i$ is $0$-dimensional by the countable union theorem \cite[Lemma~1.5.2]{Eng}. Since $\partial_\infty FZ_N$ is a union of finitely many subsets of dimension $0$, the union theorem \cite[Proposition~1.5.3]{Eng} implies that $\partial_\infty FZ_N$ has finite topological dimension (bounded by the number of strata, minus $1$). In particular, the topological dimension of $\partial_\infty FZ_N$ is equal to its cohomological dimension, see the discussion in \cite[pp. 94-95]{Eng}. This gives the desired bound because the cohomological dimension of $\partial_\infty FZ_N$ is bounded by $3N-5$ by \cite[Corollary 7.3]{Hor} (this is proved by using the existence of a cell-like map from a subset of $\partial CV_N$ -- whose topological dimension is equal to $3N-5$ by \cite{GL95}, and applying \cite{RS}). 

It was shown in Section~\ref{sec-st} that the Gromov boundary $\partial_\infty FF_N$ is equal to the union of the strata $X_i$, with $i$ comprised between $(0,0)$ and $(0,2N-2)$. Therefore, the above argument directly shows that the topological dimension of $\partial_\infty FF_N$ is at most $2N-2$ (without appealing to cohomological dimension).

Finally, the Gromov boundary $\partial_\infty I_N$ is equal to a union of strata $X'_i$, with $i\in\mathbb{Z}_+$ comprised between $0$ and $2N-3$, where each stratum $X'_i$ is a subspace of the stratum $X_{(0,i)}$ from above. Given a train-track $\tau$ of index $(0,i)$, we let $P'(\tau):=P(\tau)\cap X'_i$. Being a subspace of $P(\tau)$, the set $P'(\tau)$ has dimension at most $0$. In addition $P'(\tau)$ is closed in its stratum $X'_i$, because its boundary in $\partial_\infty I_N$ is made of points of higher index by Proposition~\ref{closed}. The same argument as above thus shows that the topological dimension of $\partial_\infty I_N$ is at most $2N-3$.
\end{proof}

\appendix
\section{Why equivalence classes of vertices and specializations?}

\begin{figure}
\begin{center}
\def\JPicScale{0.65}
\input{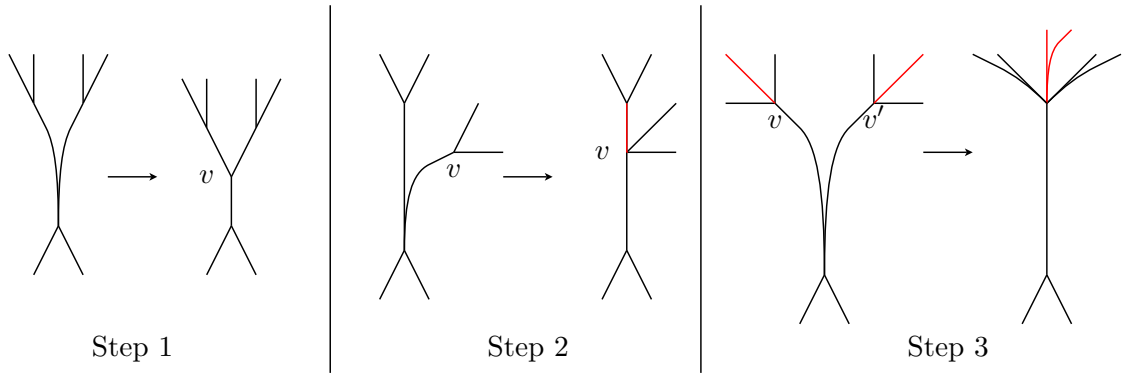}
\caption{The following three steps may repeat infinitely often along a folding sequence, showing that the homeomorphism type of the underlying simplicial tree might never stabilize.}
\label{fig-appendix}
\end{center}
\end{figure}

In this appendix, we would like to illustrate the reason why we needed to introduce an equivalence relation on the vertex set of $S^\tau$ in the definition of a train-track (Definition~\ref{d:train-track}), and allow for specializations in the definition of carrying (Definition~\ref{de-carry}).

\paragraph*{Why equivalence classes of vertices in the definition of a train-track?} In the world of train-tracks on surfaces, when one defines a train-track splitting sequence towards an arational foliation, the homeomorphism type of the complement of the train-track eventually stabilizes, after which each singularity in the limiting foliation is determined by a complementary region of a track, and the visible prongs determine the index of the singular point.

On the contrary, in a folding sequence of train-tracks towards an arational tree $T$ as defined in the present paper, the number of vertices in the preimage of a branch point in $T$, as well as the number of directions at these vertices, may never stabilize, as illustrated by the following situation (see Figure~\ref{fig-appendix}).

At some point on the folding sequence, one may have to perform a singular fold as depicted on the right-hand side of Figure~\ref{fig-appendix} (Step 3). If we had not declared the two identified vertices equivalent before computing the index, such an operation would have resulted in a drop in index. If such a situation could only occur finitely many times along the folding sequence, we could have defined a `stable' index along the folding sequence, but this might not be true in general. Indeed, later on along the folding sequence, a new trivalent vertex $v$ can be created due to a partial fold (Step 1 on Figure~\ref{fig-appendix}), and this exceptional vertex $v$ may then be declared equivalent to another vertex $v'$ in the track by applying a specialization: this results in the possibility of performing a new singular fold later on, identifying $v$ and $v'$. One could then hope that eventually, all singular folds in the folding sequence involve an exceptional vertex, which would not cause trouble as far as index is concerned; but even this may fail to be true in general. Indeed, it might happen that later on in the process, a full fold involving $v$ creates a fourth direction at $v$ (this is the red direction in Step 2 of Figure~\ref{fig-appendix}), which is not equivalent to any direction at $v$ (but is instead equivalent to a direction at another vertex $v'$ in the same class as $v$).

Introducing an equivalence class on the set of vertices in the definition of a train-track ensures that the index of our train-tracks remains constant along a folding path, and prevents overcounting the number of branch points in the limiting tree when counting branch points in the train-track. 

\paragraph*{Why introduce specializations in the definition of carrying?} We would finally like to give a word of motivation for the necessity to introduce specializations in the definition of carrying, instead of just saying that a train-track $\tau$ carries a tree $T$ if $\tau=\tau_f$ for some $F_N$-equivariant map $f:S^\tau\to T$. As already observed, in a folding sequence of train-tracks directed by $T$, partial folds create new trivalent vertices, and it can happen that the newly created vertex $v$ gets mapped to the same point in $T$ as another vertex $v'\in S^\tau$. We could have tried in this case to perform the partial fold and the specialization at the same time, in other words declare that there are several distinct partial folds of $\tau$, including one (denoted by $\tau_1$) where $v$ is not declared equivalent with any other vertex, and a second (denoted by $\tau_2$) where $v$ is declared equivalent to $v'$ (and there would be infinitely many other partial folds where $v$ is identified with any possible vertex from $S^\tau$). With this approach, the difficulty comes when proving openness of the set $\mathcal{P}(\tau_1)$ within $\mathcal{P}(\tau)$. Indeed, for a certain point $T \in \mathcal{P}(\tau)$, the carrying map may not identify the new trivalent vertex $v$ in $S^{\tau'}$ with any other vertex in $S^{\tau'}$ (hence $T$ is carried by $\tau_1$), while for nearby trees $T_n$ in $\mathcal{P}(\tau)$ (for the Gromov--Hausdorff topology), the carrying map identifies $v$ with a vertex $v_n$ going further and further away in $S^{\tau}$ as $n$ goes to infinity (hence no tree $T_n$ is carried by $\tau_1$). So without the extra flexibility in the definition of carrying, this would lead to $\mathcal{P}(\tau_1)$ not being open in $\mathcal{P}(\tau)$. This justifies our definition of carrying. We have a single way of performing a partial fold of a turn, and the specialization is performed later on along the folding sequence if needed.

\bibliographystyle{amsplain}
\bibliography{topdim-bib}

\small

\sc \noindent Mladen Bestvina, Department of Mathematics, University of Utah, 155 South 1400 East, JWB 233, Salt Lake City, Utah 84112-0090, United States

\noindent \tt e-mail:bestvina@math.utah.edu
\\
\\
\sc \noindent Camille Horbez, Laboratoire de Math\'ematiques d'Orsay, Univ. Paris-Sud, CNRS, Universit\'e Paris-Saclay, 91405 Orsay, France.

\noindent \tt e-mail:camille.horbez@math.u-psud.fr
\\
\\
\sc \noindent Richard D. Wade, Mathematical Institute, University of Oxford, Woodstock Road, Oxford, OX2 6GG, United Kingdom.

\noindent \tt email:wade@maths.ox.ac.uk

\end{document}